\newtheorem{theorem}{Theorem}[section]
\newtheorem{lemma}[theorem]{Lemma}
\newtheorem{corollary}[theorem]{Corollary}
\theoremstyle{definition}
\theoremstyle{remark}
\newtheorem{remark}[theorem]{Remark}
\numberwithin{equation}{section}
\begin{document}

\title{Rigidity of polyhedral surfaces, II}

\author{Ren Guo}

\address{Department of Mathematics, Rutgers University, Piscataway, NJ, 08854}

\email{renguo@math.rutgers.edu}

\author{Feng Luo}
\address{The Center of Mathematical Science, Zhejiang
University, Hangzhou, China, 310027}

\address{Department of Mathematics, Rutgers University, Piscataway, NJ, 08854}

\email{fluo@math.rutgers.edu}

\thanks{}

\subjclass[2000]{52C26}

\keywords{derivative cosine law, energy function, variational
principle, edge invariant, circle packing metric, circle pattern
metric}

\begin{abstract} We study the rigidity of polyhedral surfaces using
variational principle. The action functionals are derived from the
cosine laws. The main focus of this paper is on the cosine law for
a non-triangular region bounded by three possibly disjoint
geodesics. Several  of these cosine laws were first discovered and
used by Fenchel and Nielsen. By studying the derivative of the
cosine laws, we discover a uniform approach on several variational
principles on polyhedral surfaces with or without boundary. As a
consequence, the work of Penner, Bobenko-Springborn and Thurston
on rigidity of polyhedral surfaces and circle patterns are
extended to a very general context.
\end{abstract}

\maketitle

\section {Introduction}
\subsection{Variational principle} We study geometry of polyhedral surfaces using
variational principles in this paper. This can be considered as a
continuation of the paper \cite{l4}. By a polyhedral surface we
mean an isometric gluing of geometric polygons in $\mathbb{E}^2$
(Euclidean plane), $\mathbb{H}^2$ (hyperbolic plane) or
$\mathbb{S}^2$ (the 2-sphere). We emphasize that the
combinatorics, i.e., the topological cellular decomposition
associated to a polyhedral surface, is considered as an intrinsic
part of the polyhedral surface. The investigation of the geometry
of polyhedral surface has a long history. Recent resurgence of
interests in this subject is mainly due to the work of William
Thurston on geometrization of 3-manifolds and circle packing on
surfaces since 1978. Thurston's and Andreev's work on circle
packing are non-variational. The variational approach to circle
packing was introduced in a seminal paper by Colin de Verdi\'ere
\cite{cdv}. Since then, many works on variational principle on
triangulated surfaces \cite{b}, \cite{r}, \cite{le}, \cite{bs},
\cite{l1}, \cite{l3} and others have appeared. A uniform approach
on variational principles on triangulated surfaces, based on the
derivative of the cosine law for triangles, was proposed in
\cite{l4}. It is shown in \cite{l4} that almost all known
variational principles on triangulated surfaces are covered by the
cosine law for triangles and right-angled hexagons and their
Legendre transformations.

The goal of this paper is to develop variational principles
arising from the cosine laws for hyperbolic polygons bounded by
three geodesics. Figure \ref{fig:triangles-in-klein-model} is the
list of all ten cases of triangles in the Klein model of the
hyperbolic plane. In Figure \ref{fig:ten-triangles}, generalized
hyperbolic triangles are drawn in the Poincar$\acute{e}$ model
where a horocycle is represented by a circle passing through a
vertex. Cosine laws for the cases $(1,1,-1), (-1,-1,1),
(-1,-1,-1)$ were discovered in Fenchel-Nielsen's work \cite{fn}.
R. Penner discovered the cosine law for the case $(0,0,0)$
(decorated ideal triangles) in \cite{p}.

\begin{center}
\includegraphics[scale=.35]{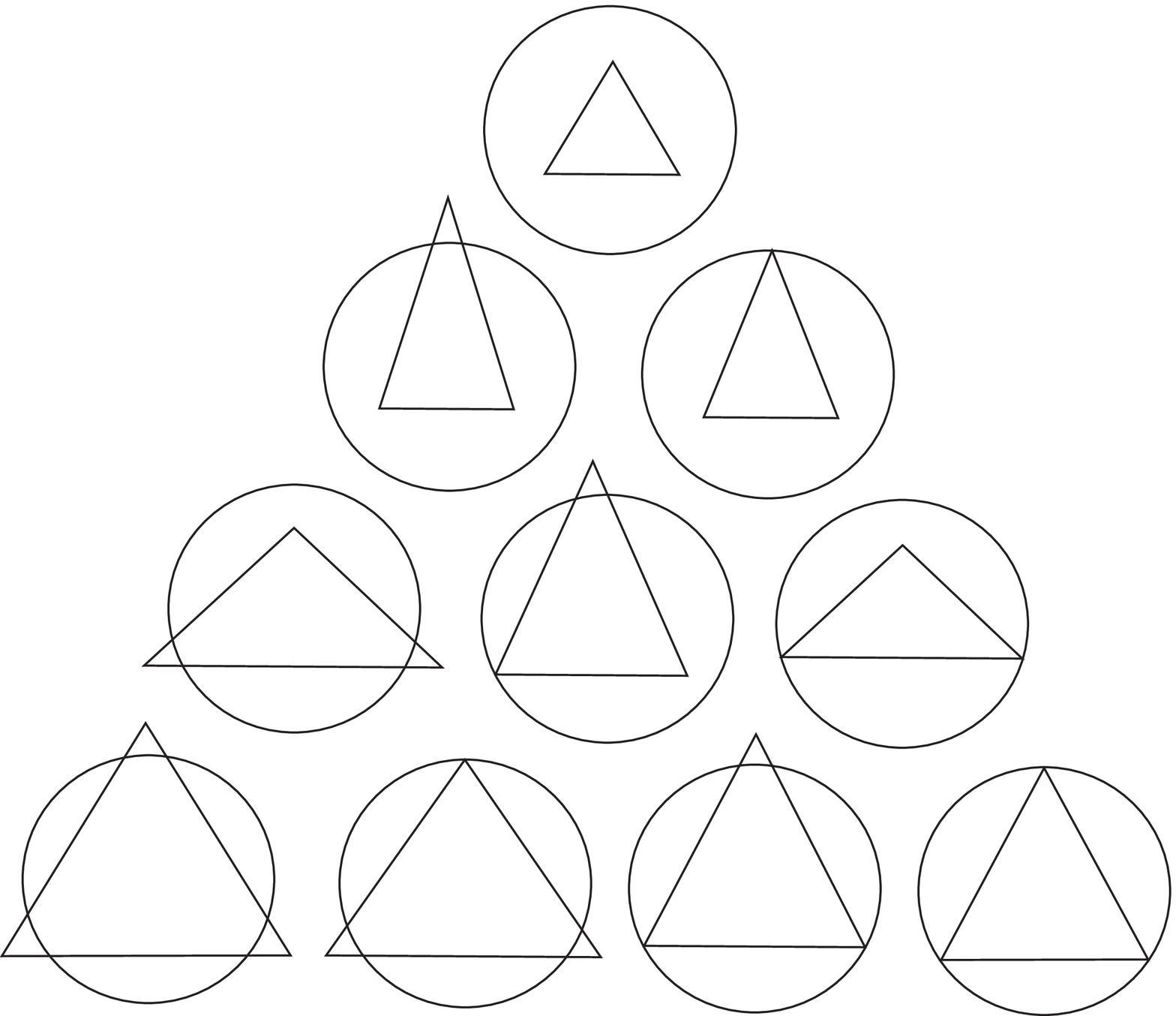}
\end{center}
\caption{\label{fig:triangles-in-klein-model}}

\begin{center}
\includegraphics[scale=.45]{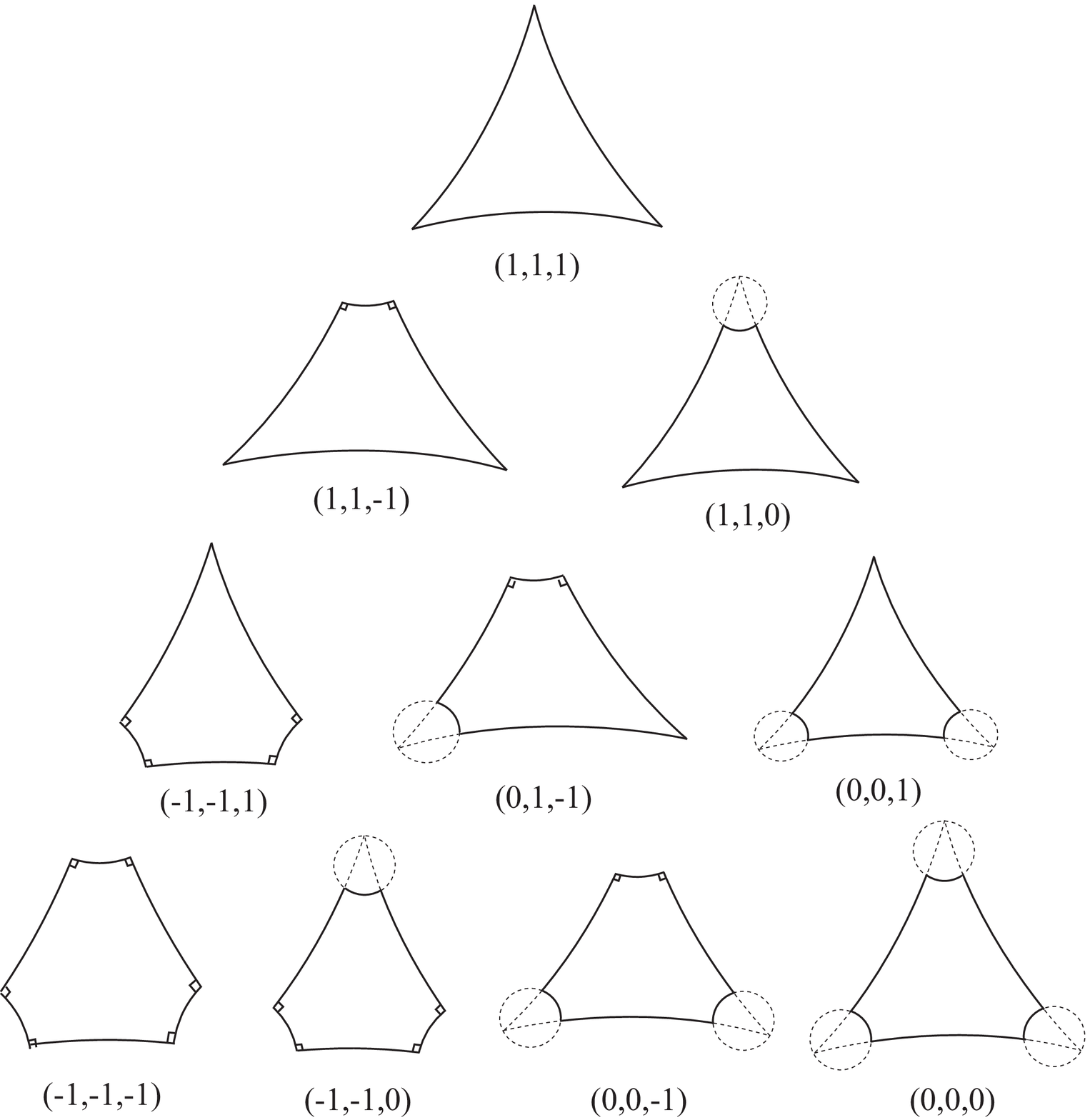}
\end{center}
\caption{\label{fig:ten-triangles}}

We observe that there is a uniform way to write the cosine laws in
all these cases (Lemma \ref{thm:cosine-law}). Furthermore, there
is a uniform formula for the derivative cosine laws (Lemma
\ref{thm:derivative-cosine}). From the derivative cosine laws, we
are able to find the complete list of localized energy functionals
as in \cite{l4}. These action functionals provide variational
principles for cellular decomposed surfaces. All rigidity results
obtained in this paper and the work of Thurston \cite{th}, Penner
\cite{p}, Bobenko-Springborn \cite{bs} can be deduced from those
concave energy functionals.

\subsection{Generalized hyperbolic triangles} A \it decorated convex polygon \rm in the hyperbolic
plane $\mathbb{H}^2$ is a finite area convex polygon $P$ so that
each ideal vertex of $P$ is associated with a horodisk centered at
the vertex. A \it generalized hyperbolic triangle \rm (or simply a
generalized triangle) $\triangle$ in $\mathbb{H}^2$ is a decorated
convex polygon in $\mathbb{H}^2$ bounded by three distinct
geodesics $L_1,L_2,L_3$ and all other (if any) geodesics $L_{ij}$
perpendicular to both $L_i$ and $L_j$. The complete list of all of
them are in Figure \ref{fig:ten-triangles}. We call $L_i\cap
\triangle$ an \it edge \rm of $\triangle.$ In the Klein model of
$\mathbb{H}^2$, there exists a Euclidean triangle
$\widetilde{\triangle}$ in $\mathbb{R}^2$ so that each edge of
$\widetilde{\triangle}$ corresponds to $L_1,L_2$ or $L_3.$

The vertices of $\widetilde{\triangle}$ are called \it
(generalized) vertices \rm of $\triangle$. Note that if $v$ is a
vertex of $\widetilde{\triangle}$ outside $\mathbb{H}^2\cup
\partial\mathbb{H}^2$, then $v$ corresponds to the geodesic $L_{ij}$
perpendicular to the two edges $L_i$ and $L_j$ adjacent to the
vertex $v$. The \it generalized angle \rm (or simply angle) $a(v)$
at a vertex $v$ of $\triangle$ is defined as follows. Let $L_i,
L_j$ be the edges adjacent to $v.$ If $v\in \mathbb{H}^2,$ then
$a(v)$ is the inner angle of $\triangle$ at $v$; if $v\in
\partial\mathbb{H}^2,$ $a(v)$ is TWICE of the length of
the intersection of the associated horocycle with the cusp bounded
by $L_i$ and $L_j$; if $v\notin\mathbb{H}^2\cup
\partial\mathbb{H}^2$, then $a(v)$ is the distance between $L_i$ and
$L_j$. Note that a generalized angle is always positive.

\begin{center}
\includegraphics[scale=.5]{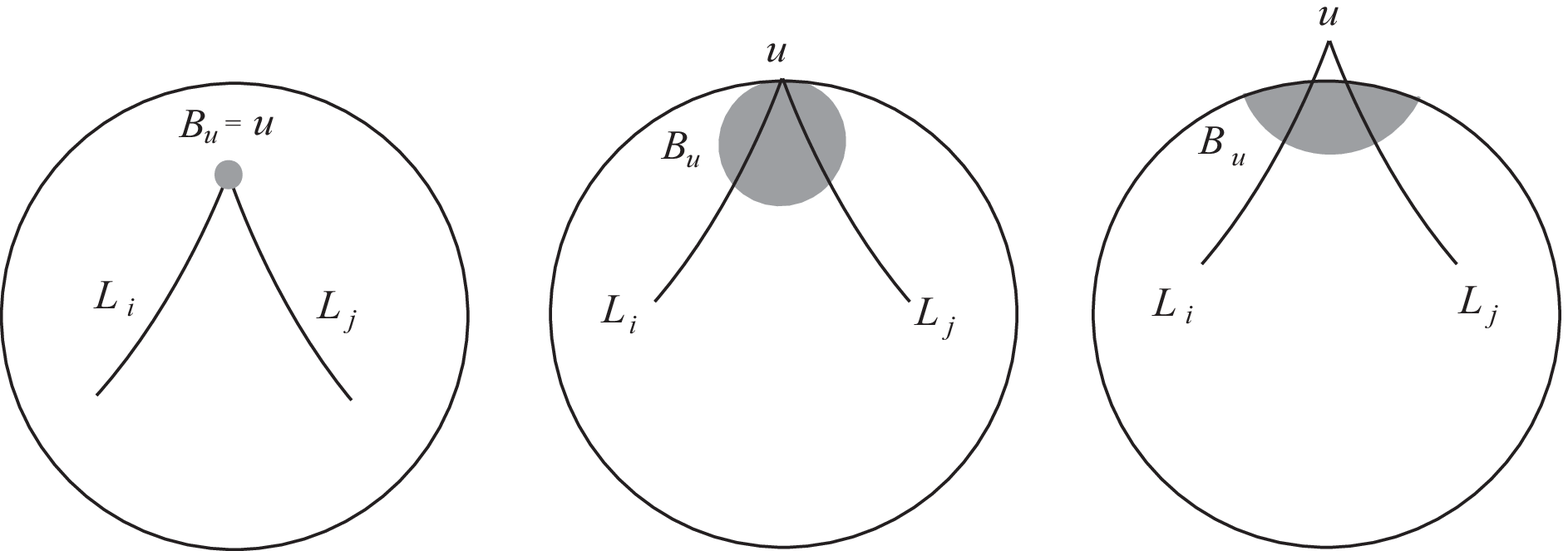}
\end{center}
\caption{\label{fig:generalized-vertex}}

As in Figure \ref{fig:generalized-vertex}, for a generalized
vertex $u$ of $\triangle$, let $B_u=\{u\}$ if $u\in\mathbb{H}^2;$
$B_u$ be the horodisc at $u$ if $u\in\partial\mathbb{H}^2;$ and
$B_u$ is the half plane missing $\triangle$ bounded by $L_{ij}$ if
$u\notin\mathbb{H}^2\cup
\partial\mathbb{H}^2$. The \it generalized edge length \rm (or edge length for
simplicity) of $L_i$ is defined as follows. The generalized length
of the edge $L_i\cap\triangle$ with vertices $u,v$ is the distance
from $B_u$ to $B_v$ if $B_u\cap B_v=\emptyset$ and is the negative
of the distance from $\partial B_u\cap L_i$ to $\partial B_v\cap
L_i$ if $B_u\cap B_v\neq\emptyset$. Note that generalized edge
length may be a negative number. See Figure \ref{fig:distance}.

\begin{center}
\includegraphics[scale=.35]{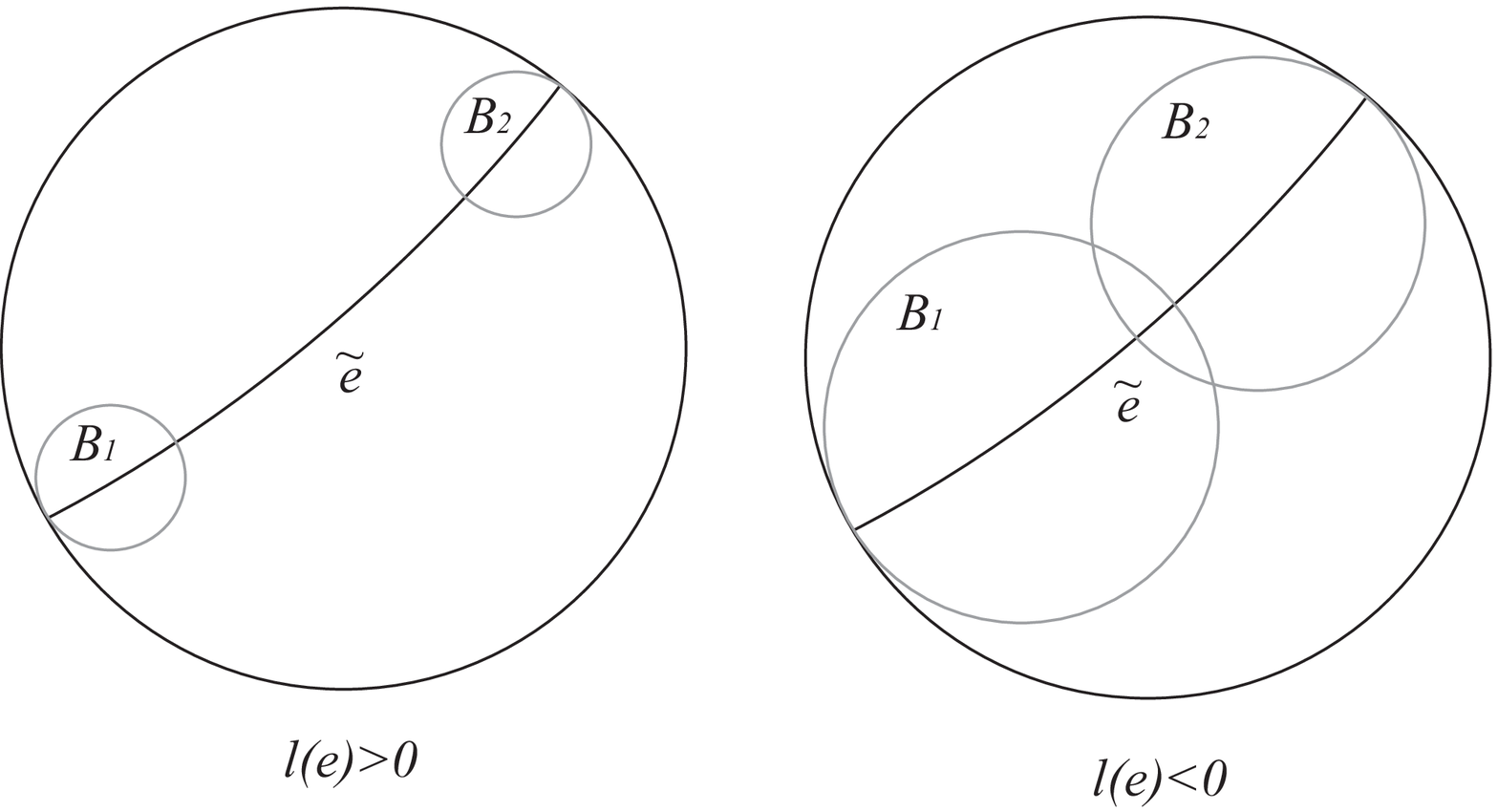}
\end{center}
\caption{\label{fig:distance}}

The cosine law (Lemma \ref{thm:cosine-law}) for generalized
triangles relates the generalized angles with the generalized edge
lengths. Given a generalized triangle $\triangle$ and a vertex $v$
of $\triangle$, the type $\varepsilon$ of $v$ is defined to be
$\varepsilon=1$ if $v\in \mathbb{H}^2,$ $\varepsilon=0$ if $v\in
\partial\mathbb{H}^2$ and $\varepsilon=-1$ if $v\notin\mathbb{H}^2\cup
\partial\mathbb{H}^2$. In this way, generalized triangles are
classified into ten types
$(\varepsilon_1,\varepsilon_2,\varepsilon_3)$ where
$\varepsilon_i\in\{-1,0,1\}$ as in Figure \ref{fig:ten-triangles}.

\subsection{The work of Penner and its generalization}

Suppose $(\widetilde{S},\widetilde{T})$ is a triangulated closed
surface $\widetilde{S}$ with the set of vertices $V$, the set of
edges $E$. We call $T=\{\sigma-V|$ a simplex
$\sigma\in\widetilde{T}\}$ an \it ideal triangulation \rm of the
punctured surface $S=\widetilde{S}-V.$ We call $V$ ideal vertices
(or cusps) of the surface $S.$ If the Euler characteristic of $S$
is negative, a \it decorated hyperbolic metric \rm $(d,r)$ on $S,$
introduced by Penner \cite{p}, is a complete hyperbolic metric $d$
of finite area on $S$ so that each ideal vertex $v$ is assigned a
positive number $r_v$. Let $T_c(S)$ be the Teichm\"uller space of
complete hyperbolic metrics with cusps ends on $S.$ Then the
decorated Teichm\"uller space introduced in \cite{p} is
$T_c(S)\times \mathbb{R}^V_{>0}.$

\begin{center}
\includegraphics[scale=.45]{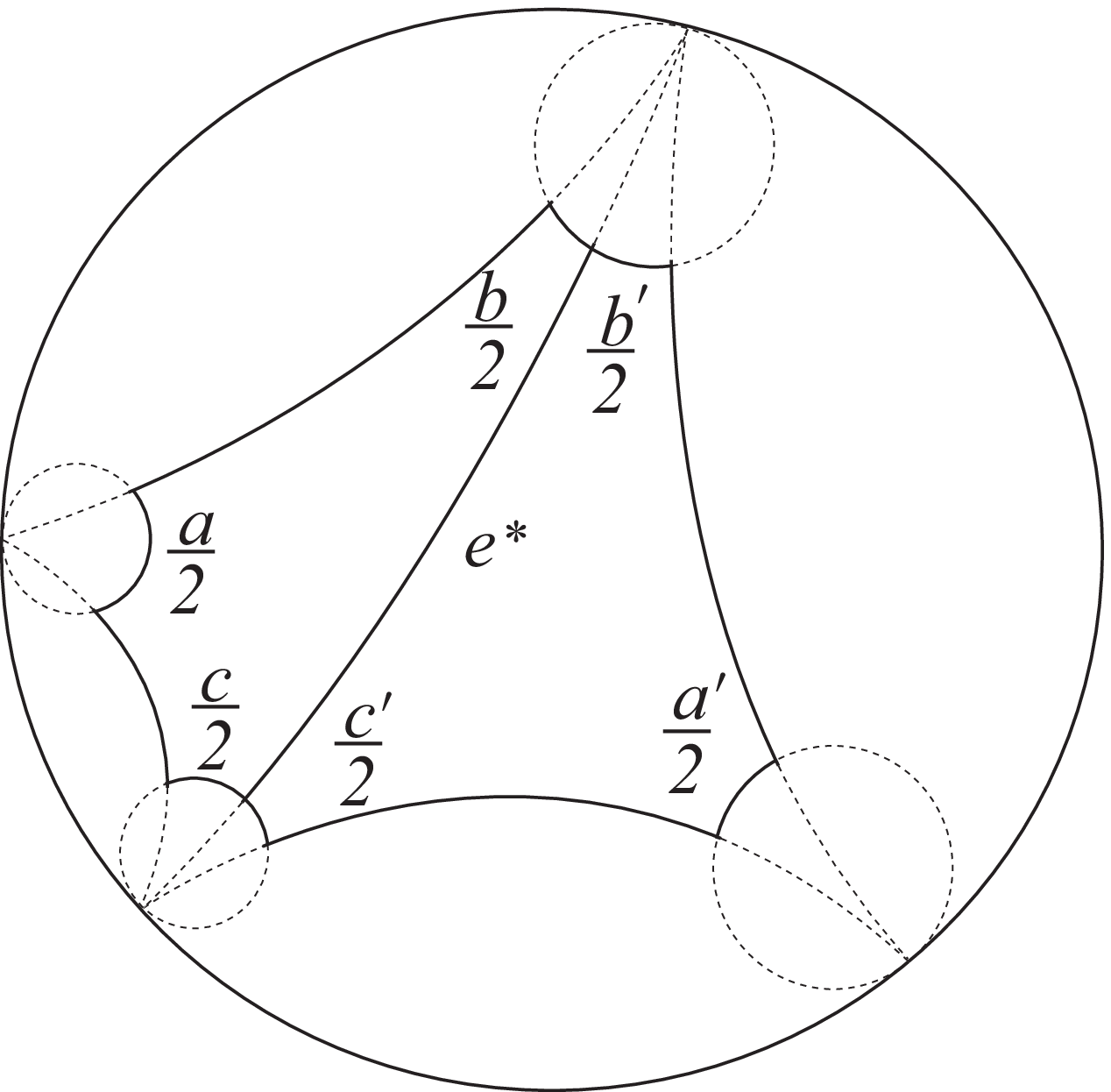}
\end{center}
\caption{\label{fig:simplicial-coordinate}}

Given a decorated hyperbolic metric $(d,r)\in T_c(S)\times
\mathbb{R}^V_{>0},$ using the ideal triangulation $T$, Penner
defined a map $\Psi: T_c(S) \times \mathbb{R}^V_{>0} \to
\mathbb{R}^E$  as follows. Given a metric $(d,r)$, each edge $e\in
E$ is isotopic to a complete geodesic $e^*$ and each triangle
$\sigma$ in $T$ is isotopic to an ideal triangle $\sigma^*$ in the
metric $d$. Since assigning a positive number $r_v$ to each cusp
$v$ is the same as associating a horodisk $B$ centered at the cusp
so that the length of $\partial B$ is $r_v,$ we see that each
ideal triangle $\sigma^*$ is naturally a decorated ideal triangle,
i.e., a type $(0,0,0)$ generalized hyperbolic triangle. The value
of $\Psi(d,r)$ at an edge $e\in E,$ is
$$\Psi(d,r)(e)=\frac{b+c-a}2+\frac{b'+c'-a'}2,$$ where $a,a'$ are the generalized
angle facing $e^*$, $b,b',c,c'$ are the generalized angle adjacent
to $e^*$ as labelled in Figure \ref{fig:simplicial-coordinate}.

An \it edge cycle \rm $(e_1,t_1,e_2,t_2,...,e_k,t_k)$ in a
triangulation $T$ is an alternating sequence of edges $e_i$'s and
faces $t_i$'s in $T$ so that adjacent faces $t_i$ and $t_{i+1}$
share the same edge $e_i$ for any $i=1,...,k$ and $t_{k+1}=t_1.$

A beautiful theorem proved by Penner is the following.

\begin{theorem}[Penner \cite{p}] Suppose $(S,T)$ is an ideally triangulated
surface of negative Euler characteristic. Then for any vector
$z\in\mathbb{R}^E_{\geq0}$ so that $\sum_{i=1}^k z(e_i)
>0$ for any edge cycle $(e_1,t_1,...,e_k,t_k)$, there exists a
unique decorated complete hyperbolic metric $(d,r)$ on $S$  so
that $\Psi(d,r) =z$.
\end{theorem}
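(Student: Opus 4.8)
The plan is to prove this via the variational principle built from the cosine law for decorated ideal triangles (type $(0,0,0)$), following the strategy that the excerpt advertises. The heart of the matter is to identify an action functional on the decorated Teichm\"uller space whose critical points solve $\Psi(d,r)=z$, and to show this functional is concave (or convex) with the right behavior at the boundary so that a unique critical point exists. Concretely, parametrize a decorated ideal triangle by its three $\lambda$-lengths (Penner's lengths), or equivalently by the generalized edge lengths $\ell_1,\ell_2,\ell_3$ appearing in the excerpt; the generalized angles $a,b,c$ of the triangle are then determined by the cosine law of Lemma \ref{thm:cosine-law}. By the derivative cosine law of Lemma \ref{thm:derivative-cosine}, the differential form $\sum_i (\text{angle}_i)\, d(\text{length}_i)$ is closed on the space of such triangles, so it has a local potential $F$, a smooth function of $(\ell_1,\ell_2,\ell_3)$; moreover the Hessian of $F$ is (up to sign) controlled by the derivative cosine law, which in the $(0,0,0)$ case should give that $F$ is strictly concave on its domain. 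This is the "localized energy functional" referred to in the introduction.

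The main steps, in order, are: (1) Fix the ideal triangulation $T$ and use $\lambda$-lengths (equivalently the per-edge lengths) of the edges $E$ as global coordinates on $T_c(S)\times\mathbb{R}^V_{>0}$ --- here one invokes Penner's fact that these give a real-analytic parametrization, with the image being exactly the set of vectors in $\mathbb{R}^E_{>0}$ satisfying the obvious triangle-by-triangle positivity constraints. (2) Define the global action functional $\mathcal{F}$ on this coordinate space as the sum over the triangles of $T$ of the per-triangle potentials $F_\sigma$, minus the linear term $\sum_{e\in E} z(e)\,x_e$ where $x_e$ is the $\lambda$-length (or its logarithm) of $e$; by construction $\partial \mathcal{F}/\partial x_e = (\text{sum of the generalized angles around } e) - z(e)$, and the angle sum equals $\Psi(d,r)(e)$ after matching with the formula $\frac{b+c-a}{2}+\frac{b'+c'-a'}{2}$. (3) Verify that $\mathcal{F}$ is strictly concave, using that each $F_\sigma$ is strictly concave from step (1)'s Hessian computation and that the sum of concave functions is concave; strict concavity gives uniqueness of a critical point, hence of the metric with $\Psi(d,r)=z$. (4) Prove existence by analyzing the asymptotic behavior of $\nabla\mathcal{F}$ (or of $\mathcal{F}$) as the coordinates tend to the boundary of the admissible domain: as an edge length degenerates, the corresponding generalized angle must blow up or vanish in a controlled way, and the edge-cycle condition $\sum z(e_i)>0$ is exactly what rules out escape to infinity --- it guarantees $\mathcal{F}$ is proper (coercive) upward, so a maximum is attained in the interior.

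The main obstacle I expect is step (4), the existence/properness argument. Concavity and the correct gradient are essentially formal consequences of the cosine-law machinery, but showing that no maximizing sequence runs off to the boundary requires a genuinely geometric input: one must understand which configurations of angles can degenerate and show that the constraints $z(e)\geq 0$ together with the edge-cycle inequalities $\sum_{i=1}^k z(e_i)>0$ precisely match the facets of the closure of $\Psi$'s image. A clean way to organize this is to show that the closure of the image of $\Psi$ is the polytope cut out by these inequalities (so $\Psi$ is a homeomorphism onto its image, an open convex subset), then combine this with strict concavity and a standard argument that a strictly concave function on a convex domain whose gradient is a given linear functional $z$ lying in the interior of the relevant cone attains an interior maximum. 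Matching the linear-programming description of the target polytope with the degeneration analysis of decorated ideal triangles is where the real work lies; everything else is bookkeeping with the two cosine-law lemmas.
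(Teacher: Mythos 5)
Your uniqueness half follows the paper's strategy (a concave per-triangle potential in the edge-length coordinates, summed over triangles, plus the injectivity-of-gradient lemma), but the specific variational setup you write down is wrong at the decisive point. For a decorated ideal triangle the form $\sum_i\theta_i\,dl_i$ is \emph{not} closed: the cosine law gives $\theta_i=2e^{(l_i-l_j-l_k)/2}$, hence $\partial\theta_i/\partial l_j=-\tfrac12\theta_i$ for $j\neq i$, which is not symmetric in $i,j$. What the paper integrates (Lemma \ref{thm:closed-form-ideal}) is $\omega=\sum_i x_i\,dl_i$ with $x_i=\tfrac12(\theta_j+\theta_k-\theta_i)$ the radius invariant; this form is closed with negative definite Hessian, and it is chosen precisely so that after summing over triangles $\partial H/\partial l(e)=r_f(e)+r_{f'}(e)=\tfrac{b+c-a}2+\tfrac{b'+c'-a'}2=\Psi(e)$. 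Your ``sum of the generalized angles around $e$'' is not $\Psi(e)$, so as stated your functional has the wrong gradient; the fix is exactly the radius-invariant computation. Two smaller corrections: the length parametrization of the decorated Teichm\"uller space has no triangle-by-triangle constraints (every vector of $\mathbb{R}^E$ is realized, so the domain is all of $\mathbb{R}^E$), and the paper never subtracts a linear term $\sum_e z(e)x_e$ --- it proves $\nabla H=\widetilde{\Psi}$ is an embedding once, by Lemma \ref{thm:convex}, which gives uniqueness for all $z$ simultaneously.

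The existence half is where your proposal has a genuine gap, which you acknowledge but do not close, and the alternative organization you suggest would not suffice as stated: knowing that the image of $\Psi$ is open and that its closure is the closed polytope does not imply the image is the whole open polytope (an open convex polytope minus a closed segment is open and has the same closure). The paper instead shows $\widetilde{\Psi}(\mathbb{R}^E)$ is open \emph{and closed in} the connected convex set $\Omega$, and the closedness is a concrete degeneration analysis based on $e^{l}=4/(\theta_1\theta_2)$: no angle can tend to $\infty$, since an edge cycle through that corner would force $\sum_i z(e_{n_i})=\infty$ (Lemma \ref{thm:infinity}); consequently $l(e)\to-\infty$ is impossible; and if $l(e)\to+\infty$, then some adjacent angle tends to $0$ and the next edge length also tends to $+\infty$, so iterating produces an edge cycle along which the relevant angles, hence $\sum_i z(e_{n_i})$, tend to $0$, contradicting $z\in\Omega$. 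Your coercivity-of-$\mathcal{F}$ route could likely be made to work, but it requires exactly this kind of estimate along degenerating sequences; without it the existence claim --- which for the stated theorem means surjectivity onto all $z\geq 0$ with positive edge-cycle sums --- remains unproven.
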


Using the derivative cosine law associated to the decorated ideal
triangle and the associated energy function, we generalize
Penner's theorem to the following.

\begin{theorem}\label{thm:generalized-Penner} Suppose $(S,T)$ is an ideally triangulated
surface of negative Euler characteristic. Then Penner's map
$\Psi:T_c(S)\times \mathbb{R}^V_{>0}\to \mathbb{R}^E$ is a smooth
embedding whose image is the convex polytope $P(T)=\{z \in
\mathbb{R}^E|\sum_{i=1}^k z(e_i) >0$ for any edge cycle
$(e_1,t_1,...,e_k,t_k)$\}.
\end{theorem}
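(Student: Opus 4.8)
The plan is to introduce edge-length coordinates on $T_c(S)\times\mathbb{R}^V_{>0}$, recognize $\Psi$ as the gradient of a strictly concave energy, and then identify its image by a Legendre-transform argument.

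\emph{Coordinates and energy.} By Penner's theory of decorated Teichm\"uller space \cite{p}, sending a decorated metric $(d,r)$ to the vector of generalized signed lengths $\ell=(\ell_e)_{e\in E}$ of the geodesics $e^*$ is a real-analytic diffeomorphism $\Lambda\colon T_c(S)\times\mathbb{R}^V_{>0}\to\mathbb{R}^E$; equivalently one may use the lambda lengths $\lambda_e=e^{\ell_e/2}$, an arbitrary positive assignment of which is realized by gluing decorated ideal triangles. Under $\Lambda$ we regard $\Psi$ as a map $\mathbb{R}^E\to\mathbb{R}^E$. Specializing the cosine law (Lemma~\ref{thm:cosine-law}) to a type $(0,0,0)$ triangle with edge lengths $\ell_1,\ell_2,\ell_3$, the generalized angle opposite $e_i$ is $a_i=2e^{(\ell_i-\ell_j-\ell_k)/2}$, so the contribution of that triangle to $\Psi(\ell)(e_i)$ is $\frac{a_j+a_k-a_i}{2}$; summing the two contributions at each edge exhibits $\Psi$ as real-analytic and, by the derivative cosine law (Lemma~\ref{thm:derivative-cosine}), as the gradient $\nabla W$ of the sum of localized energies $W(\ell)=-\sum_{\sigma\in T}\bigl(a(v_1^\sigma)+a(v_2^\sigma)+a(v_3^\sigma)\bigr)$.

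\emph{Local rigidity.} The Hessian of the single-triangle energy $w_\sigma=-(a(v_1^\sigma)+a(v_2^\sigma)+a(v_3^\sigma))$ --- which is exactly the $(0,0,0)$ case of the derivative cosine law --- is negative definite on $\mathbb{R}^3$ for all $\ell$, and since every edge lies in some triangle, $\operatorname{Hess}(W)=\sum_\sigma\operatorname{Hess}(w_\sigma)$ is negative definite on $\mathbb{R}^E$. Hence $W$ is strictly concave, $\Psi=\nabla W$ is injective and everywhere nonsingular, so it is a real-analytic embedding of $\mathbb{R}^E$ onto an open set $\Omega\subseteq\mathbb{R}^E$. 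This gives the smooth-embedding assertion, and it remains to prove $\Omega=P(T)$.

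\emph{The image.} For $\Omega\subseteq P(T)$, fix an edge cycle $(e_1,t_1,\dots,e_k,t_k)$; since $e_i=t_i\cap t_{i+1}$, regrouping by faces gives $\sum_{i=1}^k\Psi(\ell)(e_i)=\sum_j\bigl(\Psi_{t_j}(e_{j-1})+\Psi_{t_j}(e_j)\bigr)$ where $\Psi_\sigma$ denotes the per-triangle contribution above, and as $e_{j-1},e_j$ are two edges of $t_j$ meeting at a vertex $v_j$ each summand equals $a(v_j)$, i.e. twice a horocyclic arc length; hence the cycle sum is positive and $\Omega\subseteq P(T)$. For the reverse inclusion we pass to the Legendre transform: $W$ being smooth and strictly concave, $z\in\Omega$ iff $\ell\mapsto W(\ell)-\langle z,\ell\rangle$ attains a maximum, i.e. is coercive; because $W$ is a sum of terms $-2e^{(\ell_i-\ell_j-\ell_k)/2}$, a recession-cone computation shows this holds precisely when $\langle z,\eta\rangle>0$ for every nonzero $\eta$ in the closed convex cone $C=\{\eta\in\mathbb{R}^E:\eta_i\le\eta_j+\eta_k \text{ for every triangle of } T \text{ with edges } e_i,e_j,e_k\}$. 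Since $C$ is pointed, $\Omega=\operatorname{int}(C^{*})$, while $P(T)=\operatorname{int}(\Gamma^{*})$ with $\Gamma$ the cone spanned by the indicator vectors $\chi_\gamma$ of edge cycles $\gamma$; so $\Omega=P(T)$ will follow once $C=\Gamma$, the inclusion $\Gamma\subseteq C$ being exactly the cycle-sum positivity just proved.

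\emph{Main obstacle.} The crux is therefore the combinatorial identity $C=\operatorname{cone}\{\chi_\gamma:\gamma \text{ an edge cycle of } T\}$: every edge weighting obeying all triangle inequalities is a nonnegative combination of edge-cycle vectors (equivalently, every extreme ray of the rational polyhedral cone $C$ is spanned by some $\chi_\gamma$). The triangle inequalities force $\eta\ge 0$ on $C$, so I expect this by induction on $\sum_e\eta_e$: from any face one can always extend an edge cycle through the support $\{e:\eta_e>0\}$ --- this is precisely what the triangle inequalities guarantee --- and subtracting the corresponding $\chi_\gamma$ keeps the weighting in $C$. The other points needing care are the precise statement and real-analyticity of Penner's edge-length parametrization, the exact normalization making $W$ genuinely strictly concave, and the (routine) convex-analytic bookkeeping with closures in the Legendre step. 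Granting these, $\Psi$ is a real-analytic diffeomorphism from $T_c(S)\times\mathbb{R}^V_{>0}\cong\mathbb{R}^E$ onto $P(T)$, which is the assertion.
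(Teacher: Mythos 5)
Your first half is essentially the paper's argument: Penner's length parametrization, a per-triangle strictly concave energy whose gradient consists of the radius invariants (your explicit formula $w_\sigma=-(\theta_1+\theta_2+\theta_3)$ is precisely an antiderivative of the closed $1$-form $\sum_i x_i\,dl_i$ of Lemma~\ref{thm:closed-form-ideal}), negative definiteness of the Hessian because every edge lies in a triangle, and Lemma~\ref{thm:convex} to conclude that $\Psi$ is a smooth embedding. The divergence is in the surjectivity step, and that is where there is a genuine gap. Your Legendre/recession reduction is sound: for the strictly concave $W$, the image $\Omega=\nabla W(\mathbb{R}^E)$ is exactly $\{z:\langle z,\eta\rangle>0 \text{ for all nonzero } \eta\in C\}$, with $C$ the triangle-inequality cone. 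But the theorem then rests entirely on the combinatorial identity $C=\Gamma$ (the cone generated by edge-cycle multiplicity vectors), which you yourself flag as the main obstacle and only sketch. The sketch does not work as stated: induction on $\sum_e\eta_e$ is ill-posed for real weights, and, more seriously, subtracting $\chi_\gamma$ for an arbitrarily chosen cycle through the support need not stay in $C$ --- if the cycle turns at the corner of a triangle between edges $e_b,e_c$ while $\eta_a=\eta_b+\eta_c$, the inequality $\eta_a\le\eta_b+\eta_c$ is destroyed. To repair it one must route the curve according to the corner coordinates $(\eta_b+\eta_c-\eta_a)/2$: for integral $\eta$ with even triangle sums these are nonnegative integers, the corresponding disjoint normal arcs glue across edges into a closed $1$-manifold, each component of which is an edge cycle (entering and leaving every triangle through distinct edges), giving $\eta=\sum_i\chi_{\gamma_i}$; polyhedrality of $C$ then handles real $\eta$. (Your parenthetical that $\Gamma\subseteq C$ ``is exactly the cycle-sum positivity'' is also not literally right; it follows either from the same corner count, with $\chi_\gamma(b)+\chi_\gamma(c)-\chi_\gamma(a)$ equal to twice the number of turns of $\gamma$ at the corner $bc$, or by dualizing $\Omega\subseteq P(T)$ after you know $\Omega=\mathrm{int}\,C^{*}$.)

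It is worth noting that the paper avoids this combinatorial lemma altogether: having shown $\Omega\subseteq P(T)$ by the cycle-sum computation and openness from the embedding, it proves $\Omega$ is closed in the convex, hence connected, set $P(T)$ by a degeneration analysis --- if $\widetilde{\Psi}(l^{(m)})\to z\in P(T)$ while $l^{(m)}$ escapes to infinity, then either some generalized angle blows up, contradicting an edge-cycle inequality (Lemma~\ref{thm:infinity}), or some $l^{(m)}(e)\to+\infty$, which propagates from triangle to triangle and produces an edge cycle whose angle sum tends to $0$, again contradicting $z\in P(T)$. So your route is viable and would even yield the convex-analytic description $P(T)=\mathrm{int}\,C^{*}$ as a byproduct, but as submitted the key identity $C=\Gamma$ is unproved and the induction you propose fails without the normal-curve bookkeeping above.
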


We remark that, from the definition, set $P(T)$ is convex. It is
in fact a convex polytope defined by the finite set of linear
inequalities $\sum_{i=1}^k z(e_i) >0$ for those edge cycles
$(e_1,t_1,...,e_k,t_k)$ where each edge appears at most twice (see
\cite{l3}, \cite{g}).

Results similar to Penner's work have been established recently
for hyperbolic cone metric by Leibon \cite{le} and hyperbolic
metric with geodesic boundary in \cite{l3}. In \cite{l4}, a
one-parameter family of coordinates depending on a parameter $h\in
\mathbb{R}$ is introduced for hyperbolic cone metric and
hyperbolic metric with geodesic boundary. These coordinates
generalized the ones in \cite{le} and \cite{l3}. It can be shown
that Penner's map $\Psi$  cannot be deformed. The relationship
between the edge invariant in \cite{l3} and Penner's map $\Psi$
was established by Mondello \cite{m} recently.

\subsection{Thurston-Andreev's circle packing and its generalizations}

\begin{center}
\includegraphics[scale=.5]{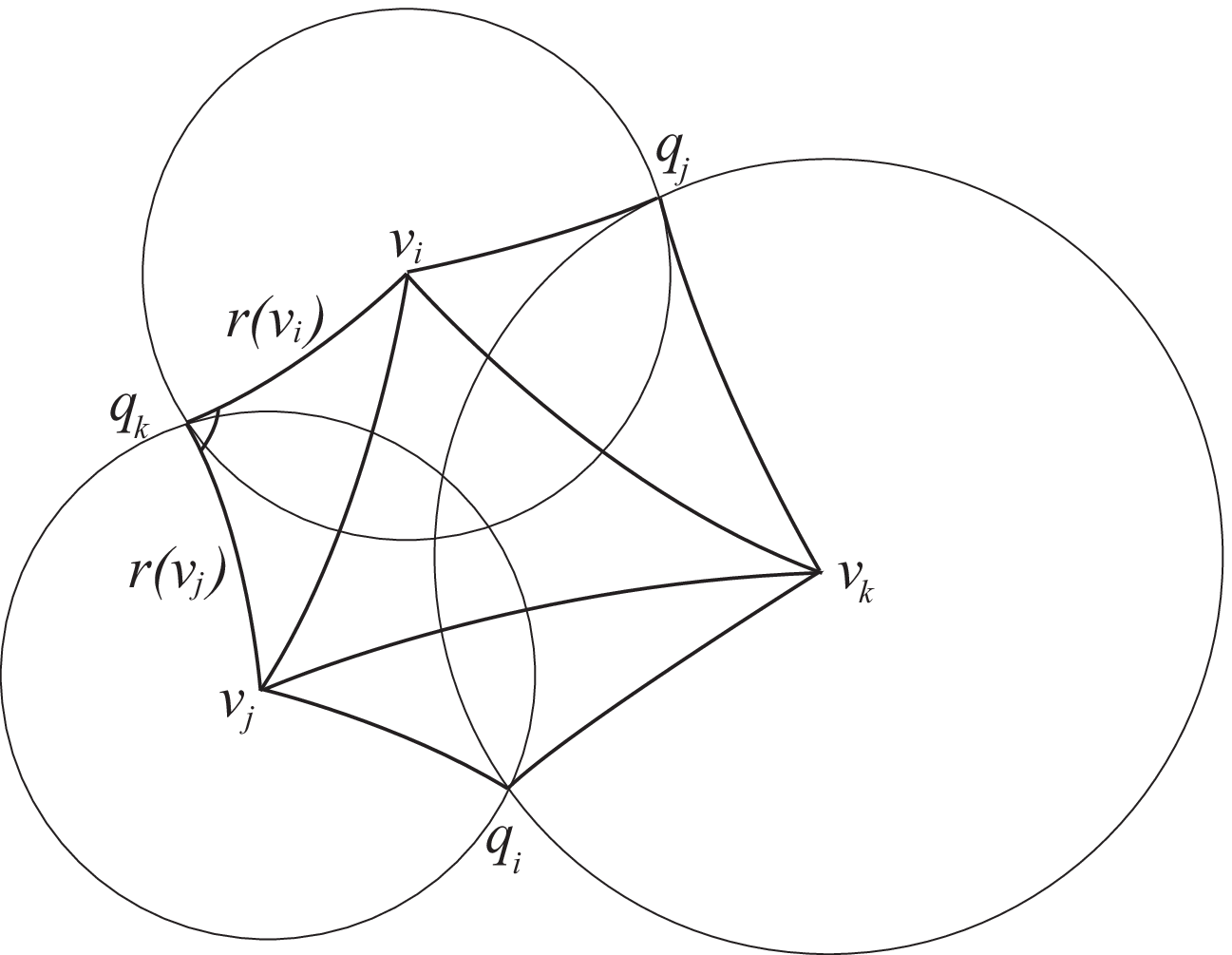}
\end{center}
\caption{\label{fig:Thurston-packing}}

Thurston's work on circle packing can be summarized as follows.
Suppose $(\Sigma,T)$ is a triangulated closed surface so that
$V,E,F$ are sets of all vertices,edges and triangles in $T.$ Fix a
map $\Phi:E\to [\frac\pi2,\pi].$ According to Thurston \cite{th},
a hyperbolic circle packing metric with intersection angles $\Phi$
is a function $r:V\to\mathbb{R}_{>0}$ so that the associated edge
length function $l:E\to \mathbb{R}_{>0}$ is defined as follows. In
Figure \ref{fig:Thurston-packing}, consider a topological triangle
with vertices $v_i,v_j,v_k$. One can construct a hyperbolic
triangle $\triangle v_iv_jq_k$ such that the edges $v_iq_k,v_jq_k$
have lengths $r(v_i),r(v_j)$ respectively and the angle at $q_k$
is $\Phi(v_iv_j)$. Let $l(v_iv_j)$ be the length of edge $v_iv_j$
in the hyperbolic triangle $\triangle v_iv_jq_k$ which is a
function of $r(v_i),r(v_j).$ Similarly, one obtains the edge
lengthes $l(v_jv_k),l(v_kv_i).$

Under the assumption that $\Phi:E\to [\frac\pi2,\pi]$, Thurston
observed that lengths $l(v_iv_j),l(v_jv_k)$ and $l(v_kv_i)$
satisfy the triangle inequality for each triangle $\triangle
v_iv_jv_k$ in $F$. Thus there exists a hyperbolic polyhedral
metric on $(\Sigma,T)$ whose edge length function is $l.$ Let
$K:V\to \mathbb{R}$ be the discrete curvature of the polyhedral
metric, which sends a vertex to $2\pi$ less the sum of all inner
angles at the vertex.

\begin{theorem}[Thurston \cite{th}] \label{thm:circle-packing}For any closed triangulated surface
$(\Sigma,T)$ and $\Phi:E\to [\frac\pi2,\pi],$ a hyperbolic circle
packing metric on $(\Sigma,T)$ is determined by its discrete
curvature, i.e., the map from $r$ to $K$ is injective.
Furthermore, the set of all $K$'s is an open convex polytope in
$\mathbb{R}^V.$
\end{theorem}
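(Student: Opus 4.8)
The plan is to prove Theorem~\ref{thm:circle-packing} by the variational method, exhibiting a strictly concave (or convex) action functional on the space of circle packing metrics whose critical points correspond to prescribed curvatures. First I would change coordinates from $r:V\to\mathbb{R}_{>0}$ to $u:V\to\mathbb{R}$ by setting $u(v)=\ln\tanh\frac{r(v)}{2}$ (the standard substitution which makes the space of metrics the open set $\mathbb{R}_{<0}^V$, or after a further reflection all of $\mathbb{R}^V$ up to the natural constraints). The key local input is the derivative cosine law: for a single hyperbolic triangle $\triangle v_iv_jv_k$ built from radii $r(v_i),r(v_j),r(v_k)$ and fixed intersection angles $\Phi$, one computes the Jacobian matrix $\partial(\text{inner angles})/\partial(u\text{-variables})$ and shows it is symmetric and negative definite (equivalently, the $2$-form $\sum \theta_i\, du_i$ restricted to each triangle is closed, so locally exact, and its potential is strictly concave). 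This symmetry is exactly the kind of statement that Lemma~\ref{thm:cosine-law} and Lemma~\ref{thm:derivative-cosine} are designed to deliver in a uniform way — here applied to the ordinary hyperbolic triangle, which is the type $(1,1,1)$ case.

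Next I would assemble the global functional. By the closedness on each triangle, the $1$-form $w=\sum_{v\in V}\big(2\pi-\sum_{\text{angles at }v}\theta\big)\,du(v) = \sum_v K(v)\,du(v)$ on the space of metrics is closed (the triangle contributions glue because each angle is counted once), hence $w=dW$ for a function $W$ on $\mathbb{R}^V$ (or the relevant convex open subset), and $W$ is strictly concave because its Hessian is the sum over triangles of the negative-definite local Hessians. Strict concavity immediately gives that $\nabla W = K$ is injective, which is the first assertion of the theorem. For the second assertion — that the image $\{K\}$ is an open convex polytope — I would note that the image of the gradient map of a strictly concave function on a convex domain is open and convex; identifying it as a \emph{polytope} requires analyzing the boundary behavior, i.e.\ what happens to the angles as some $r(v)\to 0$ or $r(v)\to\infty$, and reading off the resulting linear inequalities (these will be inequalities of the form $\sum_{v\in I}K(v) < (\text{something linear in }\Phi\text{ and the combinatorics of }I)$ for suitable subsets $I\subset V$, together with the Gauss--Bonnet equality $\sum_v K(v) = 2\pi\chi(\Sigma) + (\text{boundary/angle terms})$).

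The main obstacle I expect is the boundary analysis needed for the polytope description: one must control the limiting angles of the hyperbolic triangles as circle radii degenerate, and show that the admissible curvature vectors are cut out by \emph{finitely many} linear inequalities whose facets are actually attained in the closure. The interior rigidity (injectivity of $r\mapsto K$) is the ``soft'' part and follows formally from strict concavity once the derivative cosine law is in hand; the convexity of the curvature space is similarly soft. Pinning down the exact inequalities — and verifying the nondegeneracy condition $\Phi(E)\subset[\frac{\pi}{2},\pi]$ is precisely what guarantees both the triangle inequalities for $l$ (already observed by Thurston) and the negative-definiteness of the local Hessians — is where the real work lies, and I would carry it out by a careful case analysis of the three-circle configuration, possibly invoking a limiting/extension argument to a compactified parameter space as in \cite{l4}.
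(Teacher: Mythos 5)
For the injectivity assertion your plan is essentially the paper's own proof: the substitution $u(v)=\ln\tanh\frac{r(v)}2$ is exactly the coordinate (\ref{fml:e}) specialized to $(\varepsilon,\delta)=(1,1)$, the symmetry and negative definiteness of the Jacobian $[\partial\theta_i/\partial u_j]$ are Lemma \ref{thm:symmetry} and Lemma \ref{thm:definite}(ii), the local concave potential is Corollary \ref{thm:closed-form-packing}, and summing over triangles and applying Lemma \ref{thm:convex} is precisely how \S 4.2 concludes. One caution: the negative definiteness is not ``formal'' once the derivative cosine law is in hand; it is the substantive computation of Lemma \ref{thm:definite} (reduction to the symmetric point $r_1=r_2=r_3$ using connectedness of the domain and $\det N\neq 0$, followed by explicit inequalities), and this is where the hypothesis $\Phi(E)\subset[\frac\pi2,\pi]$ enters a second time, beyond Thurston's observation that it forces $\mathcal{M}_{1,1}(\phi_1,\phi_2,\phi_3)=\mathbb{R}^3_{>0}$ (Lemma \ref{thm:moduli}(ii)). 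You do flag this as real work, so on the rigidity half I would count your outline as matching the paper.

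The gap is in the second assertion. You appeal to the principle that the image of the gradient map of a strictly concave function on a convex domain is ``open and convex'': openness is fine (nondegenerate Hessian, or invariance of domain as in Lemma \ref{thm:convex}), but convexity of a gradient range does not follow from concavity of the potential --- gradient images of smooth strictly concave functions on convex domains need not be convex, so this step fails as stated. The paper never argues this way: for the generalized packings it determines the image ($\mathbb{R}^V_{>0}$ when $\varepsilon=-1$ or $0$) by a separate open-and-closed degeneration argument (Lemma \ref{thm:infinity-packing} and \S 4.3), and for the classical $(1,1,1)$ case it does not reprove the image statement at all --- only the rigidity receives a new proof, the polytope description being quoted from \cite{th}. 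So to complete your plan you genuinely need the deferred boundary analysis (control of the angles as some $r(v)\to 0$ or $r(v)\to\infty$) together with Thurston's identification of the finitely many linear inequalities, and this part of your proposal is currently an unexecuted sketch resting on an incorrect general principle. A further small point: in the hyperbolic setting there is no Gauss--Bonnet \emph{equality} constraining $K$ alone (the area term varies with the metric), and imposing one would contradict the openness of the image in $\mathbb{R}^V$ asserted by the theorem.
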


\begin{center}
\includegraphics[scale=.6]{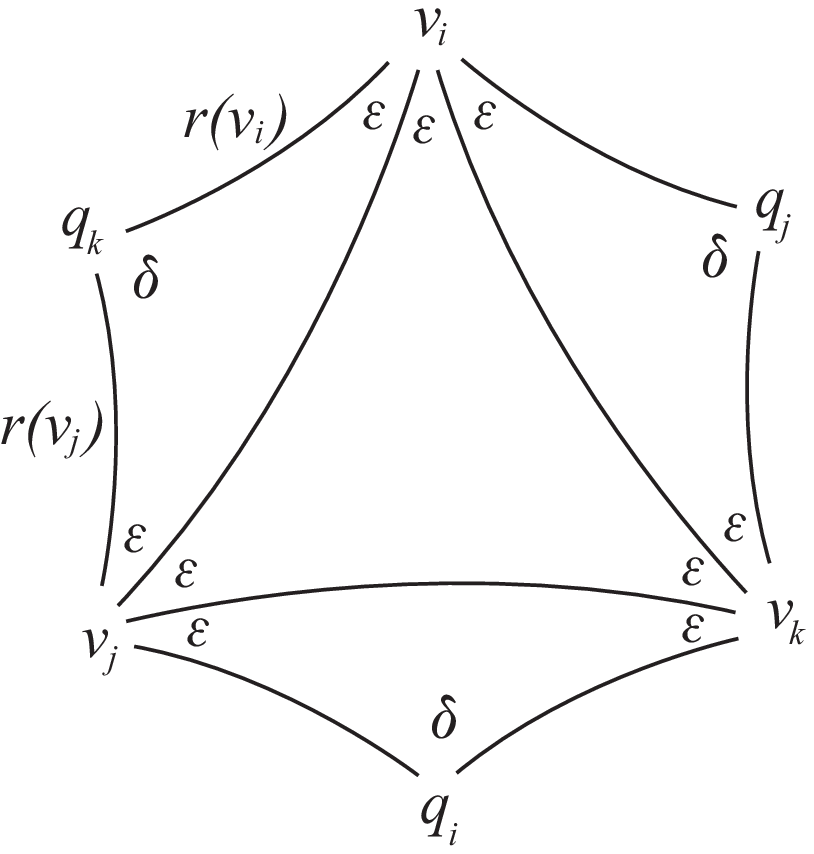}
\end{center}
\caption{\label{fig:generalized-packing}}

Since there are many other cosine laws available, we may try to
use these other cosine laws for generalized triangles of type
$(\varepsilon,\varepsilon,\delta)$ instead of type $(1,1,1)$ used
by Thurston. To state our result, let us fix the notation once and
for all. Let
\begin{equation}\label{fml:I_delta}
I_\delta=\left\{
\begin{array}{lll}
\mathbb{R}_{>0}&\ \ \mbox{if}\ \ \delta=0,-1, \\
(0,\pi]&\ \ \mbox{if}\ \ \delta=1.
\end{array}
\right. \ \ \  \mathring{I}_\delta=\left\{
\begin{array}{lll}
\mathbb{R}_{>0}&\ \ \mbox{if}\ \ \delta=0,-1, \\
(0,\pi)&\ \ \mbox{if}\ \ \delta=1.
\end{array}
\right.
\end{equation}
\begin{equation}\label{fml:J_sigma}
J_\sigma=\left\{
\begin{array}{lll}
\mathbb{R}_{>0}&\ \ \mbox{if}\ \ \sigma=1,-1, \\
\mathbb{R}&\ \ \mbox{if}\ \ \sigma=0.
\end{array}
\right.
\end{equation}

A \it generalized circle packing metric of type
$(\varepsilon,\varepsilon,\delta)$ \rm on a triangulated surface
$(\Sigma,T)$ with weight $\Phi:E\to I_\delta$ is given by a radius
function $r:V\to J_{\varepsilon\delta}$ so that the edge length
function $l:E\to J_{\varepsilon}$ is obtained from the radius $r$
and weight $\Phi$ by the cosine law applied to the generalized
triangle of type $(\varepsilon,\varepsilon,\delta)$. In Figure
\ref{fig:generalized-packing}, consider a triangle with vertices
$v_i,v_j,v_k$ in the triangulation. One can construct a
generalized hyperbolic triangle $\triangle v_iv_jq_k$ of type
$(\varepsilon,\varepsilon,\delta)$ such that the edges
$v_iq_k,v_jq_k$ have lengths $r(v_i),r(v_j)$ respectively and the
generalized angle at $q_k$ is $\Phi(v_iv_j)$. Let $l(v_iv_j)$ be
the length of edge $v_iv_j$ in the generalized hyperbolic triangle
$\triangle v_iv_jq_k$ which is a function of $r(v_i),r(v_j).$
Similarly, one obtains the edge lengthes $l(v_jv_k),l(v_kv_i).$

Depending on $\varepsilon \in \{0,-1,1\}$, the numbers
$l(v_iv_j),l(v_jv_k),l(v_kv_i)$ may not be the three edge lengthes
of a type $(\varepsilon,\varepsilon,\varepsilon)$ triangle. To
this end, let
$\mathcal{M}_{\varepsilon,\delta}(\Phi(v_iv_j),\Phi(v_jv_k),$
$\Phi(v_kv_i))=\{(r(v_i),r(v_j),r(v_k))\in
J_{\varepsilon\delta}^3|$ there exists a type $(\varepsilon,
\varepsilon, \varepsilon)$ triangle $\Delta v_i v_j v_k$ with edge
lengths $l(v_iv_j),l(v_jv_k),l(v_kv_i)$\}. Therefore $r$ can only
take values in $\mathcal{N}_{\varepsilon,\delta}(\Phi)$ a subspace
of $(J_{\varepsilon\delta})^V,$ where
$\mathcal{N}_{\varepsilon,\delta}(\Phi)=\{r:V\to
J_{\varepsilon\delta}|(r(v_i),r(v_j),r(v_k))\in\mathcal{M}_{\varepsilon,\delta}(\Phi(v_iv_j),\Phi(v_jv_k),\Phi(v_kv_i)),$
if $v_i,v_j,v_k$ are vertices of a triangle\}.

The edge length function $l:E\to J_{\varepsilon}$ produces a
polyhedral metric on $(\Sigma,T).$ We define \it the generalized
discrete curvature \rm of the polyhedral metric to be
$\widetilde{K}:V\to \mathbb{R}_{>0}$ sending a vertex to the sum
of all generalized angles at the vertex.

We show that Thurston's circle packing theorem can be generalized
to the following six cases corresponding to the generalized
triangles of type $(-1,-1,1)$, $(-1,-1,-1)$, $(-1,-1,0)$,
$(0,0,1)$, $(0,0,-1)$, $(0,0,0)$ in Figure
\ref{fig:symmetric-triangles-circle-packing}. More precisely, we
list the six cases below.
\begin{enumerate}
\item[(a)] For the case of $(-1,-1,1),$ where $\Phi:E\to (0,\pi]$,
the edge length $l(v_iv_j)$ is obtained from the radii
$r(v_i),r(v_j)$ by the cosine law for the hyperbolic pentagon as
Figure \ref{fig:symmetric-triangles-circle-packing} (a).

\item[(b)] For the case of $(-1,-1,-1),$ where $\Phi:E\to
(0,\infty),$ the edge length $l(v_iv_j)$ is obtained from the
radii $r(v_i),r(v_j)$ by the cosine law for the right-angled
hexagon as Figure \ref{fig:symmetric-triangles-circle-packing}
(b).

\item[(c)] For the case of $(-1,-1,0)$, where $\Phi:E\to
(0,\infty),$ the edge length $l(v_iv_j)$ is obtained from the
radii $r(v_i),r(v_j)$ by the cosine law for the hexagon as Figure
\ref{fig:symmetric-triangles-circle-packing} (c).

\item[(d)] For the case of $(0,0,1),$ where $\Phi:E\to (0,\pi]$,
the edge length $l(v_iv_j)$ is obtained from the radii
$r(v_i),r(v_j)$ by the cosine law for the pentagon as Figure
\ref{fig:symmetric-triangles-circle-packing} (d).

\item[(e)] For the case of $(0,0,-1),$ where $\Phi:E\to
(0,\infty),$ the edge length $l(v_iv_j)$ is obtained from the
radii $r(v_i),r(v_j)$ by the cosine law for the hexagon as Figure
\ref{fig:symmetric-triangles-circle-packing} (e).

\item[(f)] For the case of $(0,0,0)$, where $\Phi:E\to
(0,\infty),$ the edge length $l(v_iv_j)$ is obtained from the
radii $r(v_i),r(v_j)$ by the cosine law for the hexagon as Figure
\ref{fig:symmetric-triangles-circle-packing} (f).
\end{enumerate}

\begin{center}
\includegraphics[scale=.45]{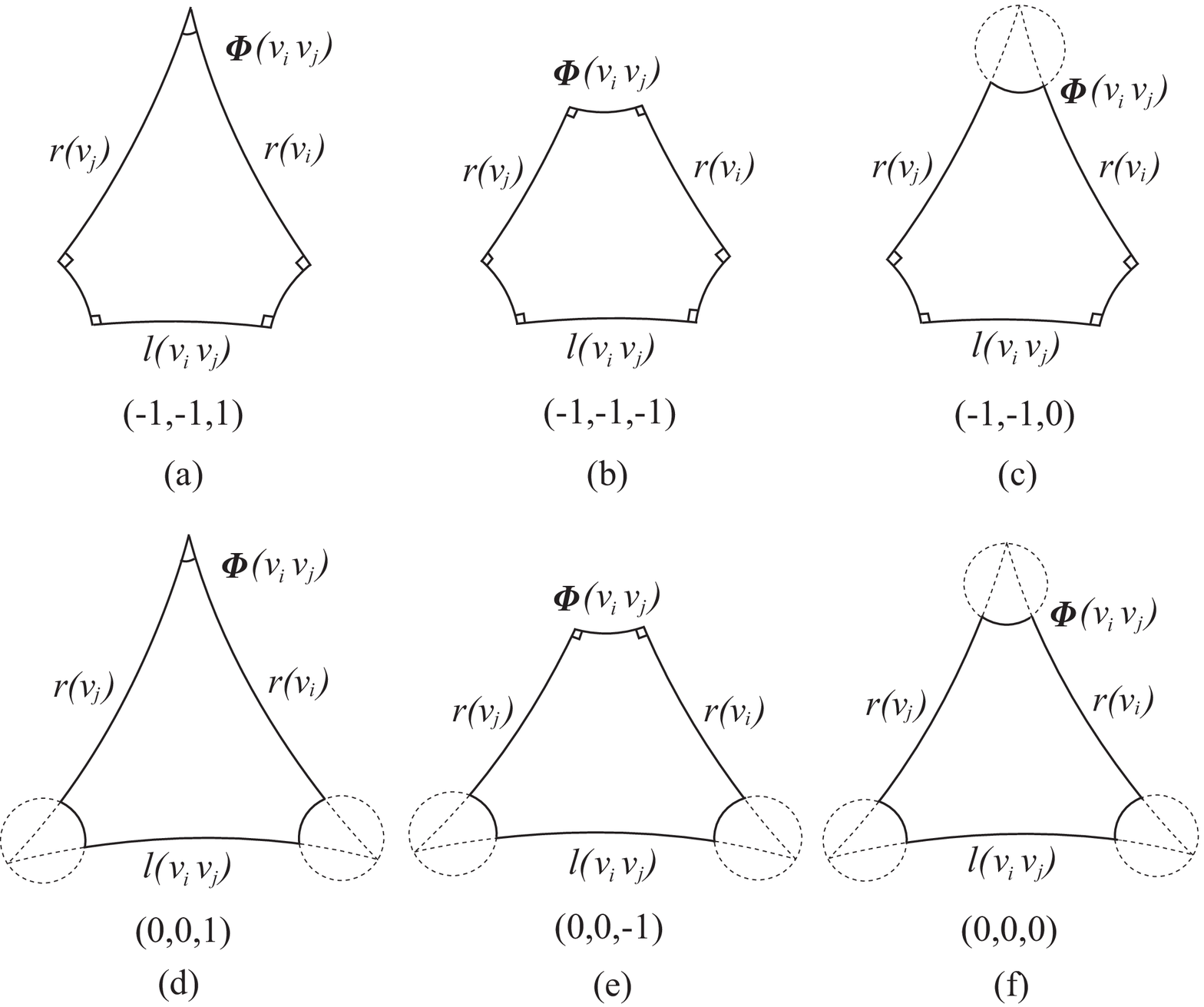}
\end{center}
\caption{\label{fig:symmetric-triangles-circle-packing}}

\begin{theorem}\label{thm:generalized-circle-packing}
Given a closed triangulated surface $(\Sigma,T)$ and $\Phi:E\to
I_\delta$ in the above six cases, the generalized $(\epsilon,
\epsilon, \delta)$ type circle packing metric
$r\in\mathcal{N}_{\varepsilon,\delta}(\Phi)$ is determined by its
generalized discrete curvature $\widetilde{K}:V\to
\mathbb{R}_{>0},$ i.e., the map from $r$ to $\widetilde{K}$ is a
smooth embedding. Furthermore, the set of all $\widetilde{K}$'s is
the space $\mathbb{R}^V_{>0}.$
\end{theorem}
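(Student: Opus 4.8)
Fix one of the six types $(\varepsilon,\varepsilon,\delta)$, a face $t=\{v_i,v_j,v_k\}$ of $T$, and the weights $\Phi$ on its three edges. Applying Lemma~\ref{thm:cosine-law} twice --- once to each ``inner'' type $(\varepsilon,\varepsilon,\delta)$ triangle $\triangle v_\alpha v_\beta q_\gamma$ that defines $l(v_\alpha v_\beta)$ from $r(v_\alpha),r(v_\beta),\Phi(v_\alpha v_\beta)$, and once to the ``outer'' type $(\varepsilon,\varepsilon,\varepsilon)$ triangle $\triangle v_i v_j v_k$ --- expresses the three generalized angles $\tilde a_i,\tilde a_j,\tilde a_k$ of $\triangle v_i v_j v_k$ as smooth functions of the radii on $\mathcal M_{\varepsilon,\delta}(\Phi(v_iv_j),\Phi(v_jv_k),\Phi(v_kv_i))$. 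Let $u_v=u_v(r(v))$ be the logarithmic change of variable identifying $J_{\varepsilon\delta}$ with $\mathbb R$ (the identity when $J_{\varepsilon\delta}=\mathbb R$). I would then compute, from Lemma~\ref{thm:derivative-cosine} applied to both uses of the cosine law together with the chain rule, that the matrix $\left[\dfrac{\partial \tilde a_\alpha}{\partial u_\beta}\right]_{\alpha,\beta\in\{i,j,k\}}$ is symmetric and negative definite. Symmetry makes the $1$-form $\omega_t=\tilde a_i\,du_i+\tilde a_j\,du_j+\tilde a_k\,du_k$ closed, so, after checking that the domain $\mathcal M_{\varepsilon,\delta}(\cdot)$ is simply connected --- I expect it to be all of $J_{\varepsilon\delta}^{3}$, since in each of the six cases the prescribed range $I_\delta$ of $\Phi$ should force $l\in J_\varepsilon$ and any admissible triple of edge lengths is realized by a type $(\varepsilon,\varepsilon,\varepsilon)$ triangle --- the form $\omega_t$ integrates to a strictly concave function $W_t$.

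\textbf{Step 2: the global energy and rigidity.}
Summing over faces, set $\mathcal E(u)=\sum_{t\in F}W_t(u|_t)$, where $u|_t=(u_{v_i},u_{v_j},u_{v_k})$; this is a smooth function on $\mathcal N_{\varepsilon,\delta}(\Phi)$ expressed in the $u$-coordinates, a domain I expect to be all of $\mathbb R^V$. By construction $\partial\mathcal E/\partial u_v=\sum_{t\ni v}\tilde a_t(v)=\widetilde K(v)$, so $\nabla\mathcal E=\widetilde K$. The Hessian of $\mathcal E$ is the sum over faces of the local Hessians $\left[\partial\tilde a_\alpha/\partial u_\beta\right]$; each is negative semidefinite on $\mathbb R^V$, and their sum is negative definite because every coordinate direction is nondegenerate for at least one face. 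Hence $\mathcal E$ is strictly concave on the convex set $\mathbb R^V$, so $\nabla\mathcal E=\widetilde K$ is injective; being also a local diffeomorphism (definite Hessian), $\widetilde K$ is a smooth embedding. This is the first assertion of Theorem~\ref{thm:generalized-circle-packing}.

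\textbf{Step 3: the image is $\mathbb R^V_{>0}$.}
The image of $\widetilde K$ is open by Step~2, and it lies in $\mathbb R^V_{>0}$ because generalized angles are positive. It remains to show the image is closed in $\mathbb R^V_{>0}$; equivalently, that $\widetilde K$ is a proper map into $\mathbb R^V_{>0}$, so that if $u_n$ leaves every compact subset of $\mathbb R^V$ then $\widetilde K(u_n)$ leaves every compact subset of $\mathbb R^V_{>0}$. This reduces to a degeneration analysis: if $u_n(v)\to+\infty$ or $u_n(v)\to-\infty$ for some $v$, then, reading off the asymptotics of the generalized angles from the relevant cosine law as a radius degenerates, some vertex angle sum $\widetilde K(w)$ tends to $0$ or to $+\infty$. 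Granting this, the image is a nonempty open and closed subset of the connected space $\mathbb R^V_{>0}$, hence equals it.

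\textbf{The main obstacle.}
I expect Step~3 to be the crux: one must verify, uniformly over the six cosine laws, that no sequence of radii escapes the parameter domain while keeping all vertex angle sums bounded away from $0$ and $\infty$. The second delicate point is in Step~1: confirming that the stated ranges $I_\delta$ of $\Phi$ really make the edge-length function take values in $J_\varepsilon$ and that $\mathcal M_{\varepsilon,\delta}$ is simply connected (ideally the whole cube), so that the closed $1$-form integrates and the global energy lives on the convex domain $\mathbb R^V$. Once these are in hand, the symmetry and definiteness coming from the derivative cosine law do the rest automatically.
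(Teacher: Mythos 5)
Your overall scheme (a local concave energy built from the derivative cosine law, summed over faces so that its gradient is $\widetilde K$, then openness plus closedness to identify the image) is the same as the paper's, and for the three cases with $\varepsilon=0$ it indeed goes through trivially (there $\theta_k$ depends only on $r_k$, as in (\ref{fml:c})). The genuine gap is in the three cases with $\varepsilon=-1$: your expectation that $\mathcal{M}_{\varepsilon,\delta}$ is all of $J_{\varepsilon\delta}^{3}$, hence that the global energy lives on the convex set $\mathbb{R}^V$, is false. By Lemma \ref{thm:realize} and Lemma \ref{thm:M}, for $\varepsilon=-1$ the radii must satisfy nontrivial open inequalities ensuring every $l_k>0$ (e.g.\ $\sinh r_i\sinh r_j-\cos\phi_k\cosh r_i\cosh r_j>1$ when $\delta=1$). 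Simple connectivity lets you integrate a closed form, but Lemma \ref{thm:convex} needs a \emph{convex} domain to get global injectivity of the gradient, both facewise and after summing. The missing idea is the specific change of variable $u_i=-\int_{r_i}^{\infty}\tau_{\varepsilon\delta}(t)^{-1}dt$, which does two jobs simultaneously: it makes the Jacobian $[\partial\theta_\alpha/\partial u_\beta]$ symmetric (Lemma \ref{thm:symmetry}; the symmetry uses exactly $dr_i=\tau_{\varepsilon\delta}(r_i)\,du_i$, so your ``logarithmic, identity when $J_{\varepsilon\delta}=\mathbb{R}$'' choice would destroy it --- e.g.\ in type $(-1,-1,0)$ the correct coordinate is $u_i=-e^{-r_i}$, not $u_i=r_i$), and it converts each constraint $l_k>0$ into the linear inequality $u_i+u_j>-\phi_k$, so that $u(\mathcal{M}_{\varepsilon,\delta})$, and hence the global domain $u(\mathcal{N}_{\varepsilon,\delta}(\Phi))$, is convex (Lemma \ref{thm:moduli}). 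Without this, Step 2 as written does not apply.

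Two further repairs are needed. First, negative definiteness of the Jacobian is not an automatic consequence of Lemma \ref{thm:derivative-cosine} plus the chain rule: in the paper it is a substantive computation (Lemma \ref{thm:definite}), done by showing $\det N>0$ everywhere and checking the principal minors at the symmetric point $r_1=r_2=r_3$, then using connectedness of the domain; your proposal simply asserts it. Second, your properness argument in Step 3 only treats $u_n(v)\to\pm\infty$; because for $\varepsilon=-1$ the domain has a finite boundary, you must also exclude sequences with bounded radii approaching the boundary of $\mathcal{N}_{-1,\delta}(\Phi)$, i.e.\ some edge length $l_e\to0$. There the right-angled hexagon cosine law gives $\cosh\theta_j>\cosh l_i/\sinh l_i\to\infty$, so some $\widetilde K(w)\to\infty$, which is the second half of the paper's closedness proof (and for $\delta=0$ one also needs the observation that the constraint $\phi_k>e^{-r_i}+e^{-r_j}$ keeps the radii bounded below). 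With the correct coordinates, the convexity statement, the definiteness computation, and the boundary degeneration added, your outline matches the paper's proof.
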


Our method of proof of Theorem
\ref{thm:generalized-circle-packing} also produces a new
variational proof of the rigidity of circle packing in Thurston's
theorem (Theorem \ref{thm:circle-packing}) similar to the proof in
\cite{cl}. However, unlike the proof in \cite{cl} which uses
Thurston's geometric argument and Maple program, our proof is a
straight forward calculation. We are not able to establish Theorem
\ref{thm:generalized-circle-packing} for the rest two cases of
$(1,1,-1),(1,1,0)$ in Figure \ref{fig:two-not-work}.

\begin{center}
\includegraphics[scale=.45]{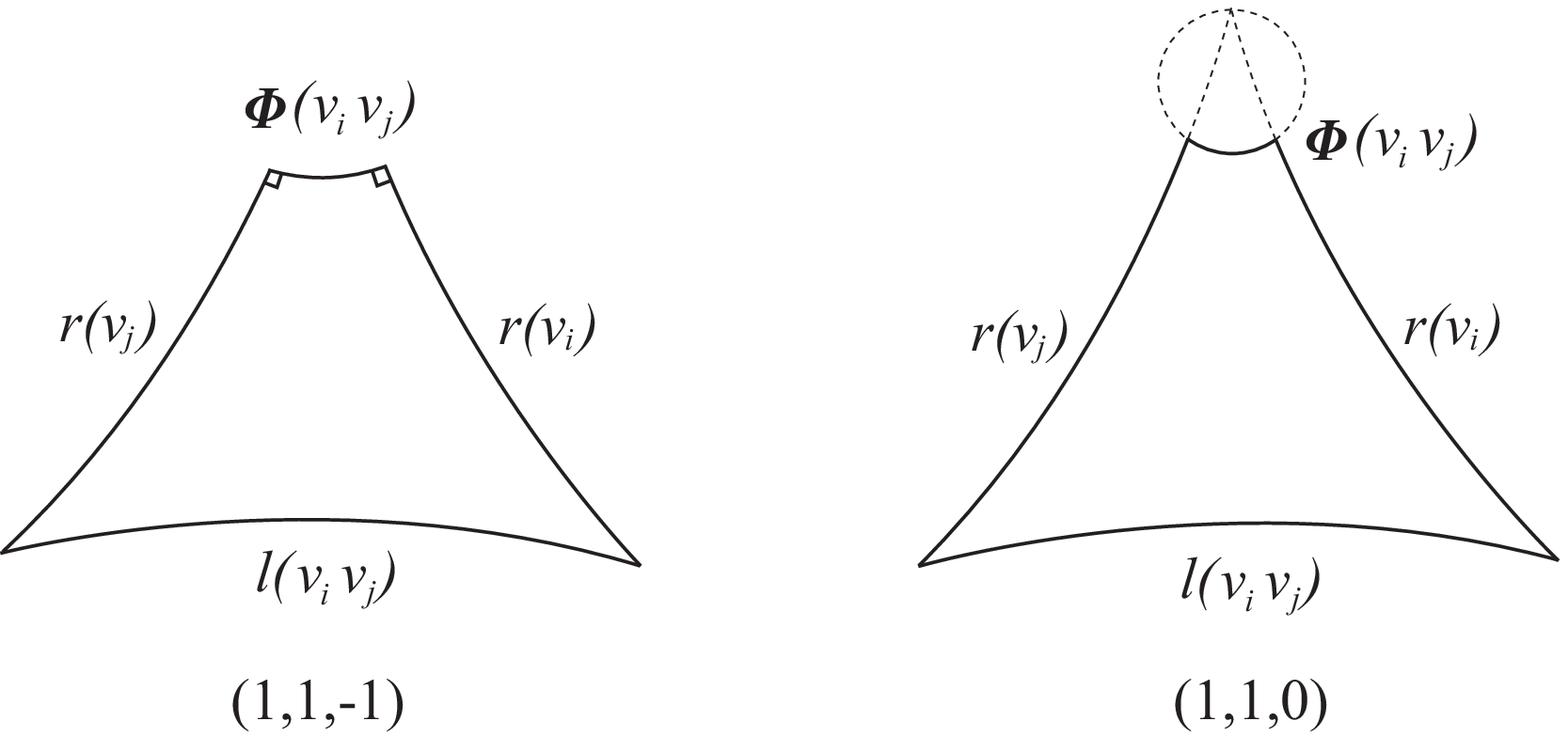}
\end{center}
\caption{\label{fig:two-not-work}}

Generalized circle patterns have been considered by many authors
including Bobenko-Springborn \cite{bs}, Schlenker \cite{sc},
Stephenson \cite{s} and others. In particular, Stephenson's
question \cite{s} (page 331) about disjoint circle pattern
motivates us to consider the above situations.

Furthermore, similar to the work of \cite{cl} on discrete
curvature flow, for the case of $\varepsilon=-1$ in Theorem
\ref{thm:generalized-circle-packing}, there exists a corresponding
generalized curvature flow. Indeed, let $r_i:=r(v_i)$ be the radii
at vertex $v_i\in V$ and $\widetilde{K}_i$ be the generalized
discrete curvature at vertex $v_i\in V.$ The generalized curvature
flow is
$$\frac{dr_i(t)}{dt}=-\widetilde{K}_i\frac12e^{r_i}-\frac12\varepsilon\delta e^{-r_i}.$$
We have shown that it is a negative gradient flow of a strictly
concave down function after a change of variables.

\subsection{Bobenko-Springborn's circle pattern and its
generalizations}

In \cite{bs}, Bobenko-Springborn generalized Thurston's circle
packing pattern in the case of $\Phi=\pi$ in a different setting.
The energy functional in \cite{bs} was derived from the discrete
integrable system. Let us recall briefly the framework in
\cite{bs}. See Figure \ref{fig:circle-pattern} (a). Let
$(\Sigma,G)$ be a cellular decomposition of a closed surface with
the set of vertices $V,$ edges $E$ and 2-cells $F.$ The dual
cellular decomposition $G^*$ has the set of vertices $V^*(\cong
F)$ so that each 2-cell $f$ in $F$ contains exactly one vertex
$f^*$ in $G^*.$ If $v$ is a vertex of a 2-cell $f$, we denote it
by $v<f.$ For all pairs $(v,f)$ where $v<f$, join $v$ to $f^*$ by
an arc in $f$, denoted by $(v,f^*),$ so that $(v,f^*)$ and
$(v',f^*)$ don't intersect in their interior. Then these arcs
$\cup_{(v,f)}(v,f^*)$ decompose the surface $\Sigma$ into a union
of quadrilaterals of the form $(v,v',f^*,f'^*)$ where $vv'\in E,
v<f,v'<f'.$ According to \cite{bs}, this quadrilateral
decomposition of a closed surface arises naturally from the
integrable system and discrete Riemann surfaces. Now suppose
$\theta: E\to(0,\pi)$ is given. For $r:V^*\to\mathbb{R}_{>0}$,
called a \it circle pattern metric\rm, and a quadrilateral
$(v,v',f^*,f'^*)$, construct an $\mathbb{E}^2$ (or $\mathbb{H}^2$)
triangle $\triangle f^*f'^*v$ so that the length of the edges
$vf^*,vf'^*$ are given by $r(f^*),r(f'^*)$ and the angle at $v$ is
$\theta(vv').$

\begin{center}
\includegraphics[scale=.5]{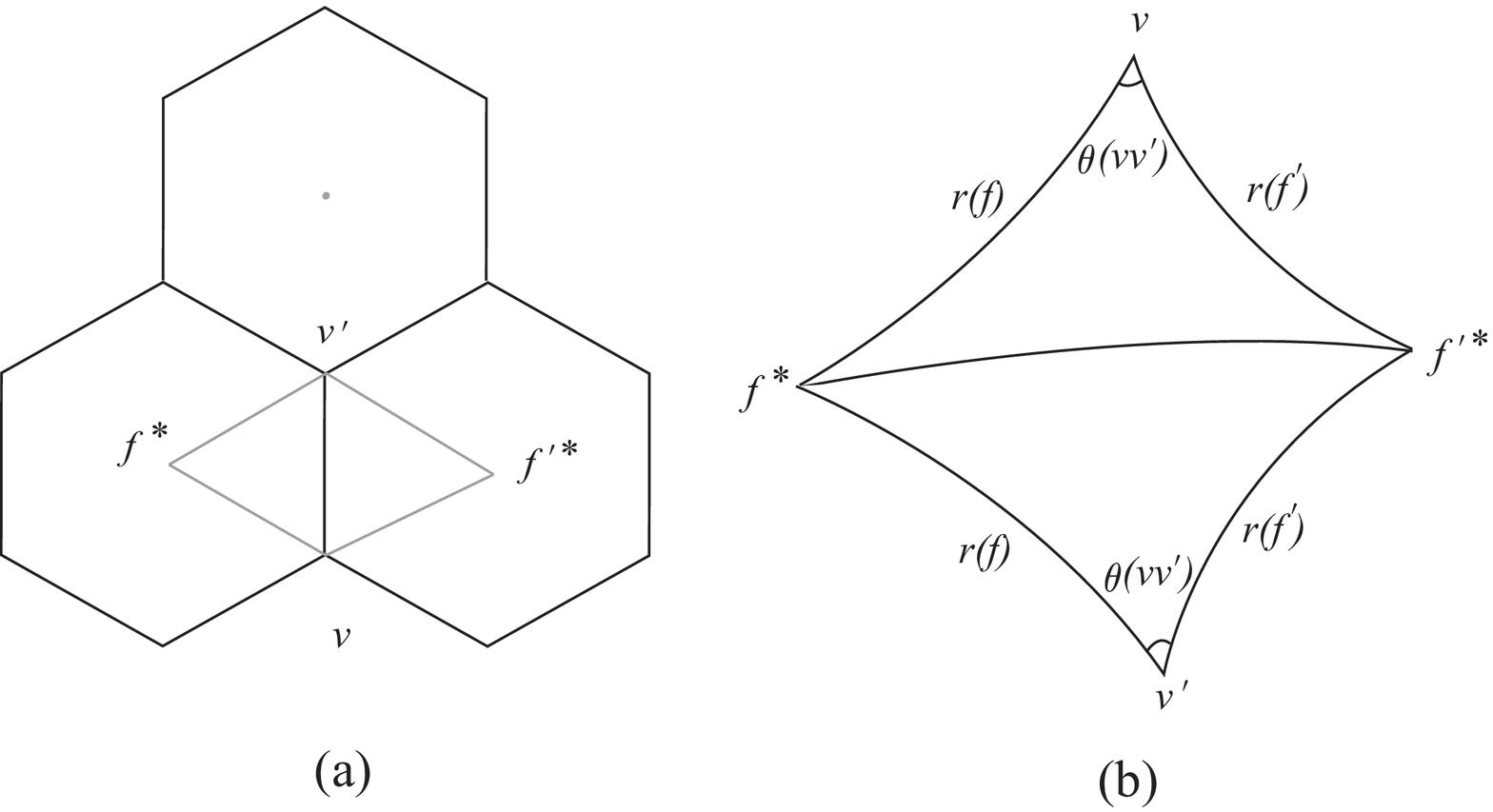}
\end{center}
\caption{\label{fig:circle-pattern}}

In this way the quadrilateral $(v,v',f^*,f'^*)$ is realized in
$\mathbb{E}^2$ (or $\mathbb{H}^2$) as the isometric double of the
triangle $\triangle f^*f'^*v$ across the edge $f^*f'^*$. Since the
surface $\Sigma$ is a gluing of the quadrilateral
$(v,v',f^*,f'^*)$ along edges, by isometrically gluing these
quadrilaterals, one obtains a polyhedral metric on the surface
$\Sigma$ with cone points at the vertices $V$ and $V^*.$ The cone
angles at $v\in V$ are prescribed by $\theta,$ the only variable
curvatures are at $f^*\in V^*.$ Bobenko-Springborn's rigidity
result says,
\begin{theorem}[Bobenko-Springborn \cite{bs}]\label{thm:circle-pattern} For any cell decomposition
$(\Sigma,G)$ of a closed surface and any map $\theta:E\to
(0,\pi),$ the circle pattern metric $r:V^*\to\mathbb{R}_{>0}$ is
determined by its discrete curvature $K:V^*\to\mathbb{R}$ for
hyperbolic polyhedral metrics and is determined up to scaling by
$K:V^*\to\mathbb{R}$ for Euclidean polyhedral metrics.
\end{theorem}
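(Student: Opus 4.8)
The plan is to realize this statement as a special case of the general variational framework developed in this paper, exactly as Penner's and Thurston's theorems are. First I would identify the relevant cosine law: each quadrilateral $(v,v',f^*,f'^*)$ is the double of a triangle $\triangle f^*f'^* v$ in $\mathbb{E}^2$ or $\mathbb{H}^2$ with two sides of lengths $r(f^*),r(f'^*)$ and included angle $\theta(vv')$. Doubling across the edge $f^*f'^*$ means that the relevant "generalized triangle" for the vertex $f^*$ is the one whose three edges carry the radii, and whose angle at the $v$-corner is prescribed. In the hyperbolic case this is (after the substitution $x_i = \log\tanh(r_i/2)$ or similar) governed by the derivative cosine law of Lemma \ref{thm:derivative-cosine} for the appropriate type; in the Euclidean case it is the limiting flat cosine law. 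The key structural fact I would invoke is the symmetry of the Jacobian matrix of angles with respect to the logarithmic radius variables, which is precisely what the derivative cosine law supplies: the mixed partials $\partial a_i/\partial u_j = \partial a_j/\partial u_i$ where $u_j$ is the correct change of coordinates on $r$.

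Second, I would assemble the local energy functionals. For each quadrilateral one obtains, by integrating the closed $1$-form $\sum_i a_i \, du_i$ guaranteed by the symmetry above, a locally defined function $W$ of the logarithmic radii whose gradient records the angles at $f^*$ (and the complementary angles). Summing $W$ over all quadrilaterals of the decomposition gives a global function $\mathcal{E}$ on the space of circle pattern metrics $r: V^* \to \mathbb{R}_{>0}$, and by construction $\partial \mathcal{E}/\partial u_{f^*}$ equals (up to an additive constant coming from $\theta$) the curvature $K(f^*)$ — that is, $2\pi$ minus the cone angle at $f^*$, which is the sum of the $f^*$-angles of all incident triangles. Thus the map $r \mapsto K$ is, in the appropriate coordinates, the gradient of $\mathcal{E}$.

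Third — and this is the heart of the matter — I would establish that $\mathcal{E}$ is strictly convex (or strictly concave, depending on sign conventions). For the hyperbolic case this follows because the Hessian of each local piece $W$ is, by the explicit formula from the derivative cosine law, negative definite (or negative semi-definite with a one-dimensional kernel that is killed when there is a hyperbolic constraint), and these local Hessians sum to a definite global Hessian. Strict convexity immediately yields injectivity of the gradient map $r \mapsto K$. For the Euclidean case the local Hessians are only negative semi-definite, with kernel spanned by the direction of simultaneous scaling of all radii; this is exactly why in the Euclidean case rigidity holds only up to scaling. Here I would need to check that the common kernel of all the local Hessians is precisely one-dimensional and corresponds to global scaling — a rank computation on the combinatorial incidence structure of the quadrilateral decomposition, using connectedness of $\Sigma$.

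The main obstacle I expect is the positivity/definiteness computation for the Hessian in the general setting: one must verify, directly from the unified derivative cosine law of Lemma \ref{thm:derivative-cosine}, that the relevant $2\times 2$ (or $3\times 3$) block associated to each triangle has the correct sign, uniformly across the type $(0,0,\delta)$ or $(-1,-1,\delta)$ relevant to the doubled triangle, and in both the hyperbolic and Euclidean geometries. A secondary technical point is that the domain of admissible radii (those for which the triangles actually close up, analogous to the set $\mathcal{M}_{\varepsilon,\delta}$) must be shown to be connected and open so that a convex function on it with injective gradient has open image; identifying that image with all of $\mathbb{R}^{V^*}$ (the "furthermore" clause of the analogous Theorem \ref{thm:generalized-circle-packing}) would then follow by the same properness/boundary-behavior analysis, though for the statement as given only injectivity is asserted, so this last step can be kept brief.
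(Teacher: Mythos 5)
First, a point of orientation: the paper does not actually prove Theorem \ref{thm:circle-pattern} --- it is quoted from Bobenko--Springborn \cite{bs}, and the paper's own contribution is Theorem \ref{thm:generalized-circle-pattern}, whose hyperbolic specialization $(\varepsilon,\varepsilon,\delta)=(1,1,1)$, $h=0$ recovers the hyperbolic half of the statement. Your hyperbolic argument is exactly that route: double of $\triangle f^*f'^*v$ with prescribed $\theta(vv')$ and radii $r(f^*),r(f'^*)$; change to the variable $w_i=\int_1^{l_i}\tau_{\varepsilon\delta}^{h-1}(t)\,dt$ (which for $(1,1,1)$, $h=0$ is your $\log\tanh(l_i/2)$); symmetry of the Jacobian from the derivative cosine law, hence a closed $1$-form and a local energy $F_{\theta,h}$ (Lemma \ref{thm:closed-form-pattern}); summation over quadrilaterals and injectivity of the gradient via Lemma \ref{thm:convex}. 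Note that since $\mathcal{D}_{1,1}(\theta)=\mathbb{R}_{>0}^2$, the admissible set in the $w$-coordinates is a product of intervals, so the convexity hypothesis of Lemma \ref{thm:convex} (convexity, not mere connectedness as you write) is automatic; also note the gradient of the paper's energy is $K_h$, an affine function of the cone angle, which determines $K$ since the number of incident quadrilaterals is combinatorially fixed --- your ``up to an additive constant'' remark covers this.

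Where you genuinely depart from the paper is the Euclidean half. The paper's machinery is built entirely on generalized \emph{hyperbolic} triangles, so the Euclidean rigidity-up-to-scaling is not deducible from Lemma \ref{thm:derivative-cosine} or Lemma \ref{thm:closed-form-pattern}; it is simply part of the cited theorem. Your sketch --- Euclidean angles are scale-invariant, so in logarithmic radii each local Hessian is only negative semi-definite with kernel the local scaling direction, and the global kernel is exactly the one-dimensional global scaling by connectedness of the dual graph --- is the standard Colin de Verdi\'ere/Bobenko--Springborn argument and is sound in outline. The gap is that the two decisive facts are only announced, not proved: (i) negative semi-definiteness of the local $2\times2$ Euclidean Hessian with kernel \emph{exactly} spanned by $(1,1)$ (this needs an explicit computation from the Euclidean cosine law, the flat analogue of the positive-definiteness of the matrix $B$ in the proof of Lemma \ref{thm:closed-form-pattern}); and (ii) the argument that a vector in the kernel of a sum of same-signed semi-definite forms must lie in every local kernel, so that adjacent faces have equal logarithmic radii and connectedness forces a global scaling. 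Both are routine, but as written your proposal defers precisely the computations on which the Euclidean conclusion rests; the hyperbolic part, by contrast, is complete modulo quoting the paper's Section 5.
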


In the hyperbolic geometric setting, the essential part of
Bobenko-Springborn's construction is to produce the hyperbolic
triangle $\triangle f^*f'^*v$ with two prescribed edge lengths and
the prescribed angle between the two edges. Since there are eight
other generalized hyperbolic triangles of type
$(\varepsilon,\varepsilon,\delta)$ as listed in Figure
\ref{fig:ten-triangles}, we can use them to produce hyperbolic
metrics. Our result below shows that the rigidity phenomena still
hold for all other eight cases.

We assume the same setting as in \cite{bs} that $(\Sigma,G)$ is a
cellular decomposed surface so that $V,E,F$ are the sets of all
vertices, edges and 2-cells with dual $(\Sigma,G^*)$. Suppose
$(\varepsilon,\varepsilon,\delta)\in\{-1,0,1\}^3$, and a map
$\theta: E\to \mathring{I}_\delta$ is given (see
(\ref{fml:I_delta}) for the definition of $\mathring{I}_\delta$).

Assume for an $r\in(J_{\varepsilon\delta})^{V^*}$ and any
quadrilateral $(v,v',f^*,f'^*)$, we can construct a type
$(\varepsilon,\varepsilon,\delta)$ generalized hyperbolic triangle
$\triangle f^*f'^*v$ so that the length of $vf^*,vf'^*$ are
$r(f^*),r(f'^*)$ respectively and the generalized angle at $v$ is
$\theta(vv').$ Now realize the quadrilateral $(v,v',f^*,f'^*)$ as
the metric double of $\triangle f^*f'^*v$ across the edge
$f^*f'^*.$ The \it generalized curvature \rm of the resulting
circle pattern is concentrated at the vertices $V^*.$ It is
defined as follows. For $h\in\mathbb{R},$ define the generalized
curvature $K_h:V^*\to\mathbb{R}$ by
\begin{equation}\label{fml:K_h}
K_h(f^*)=\sum_{i=1}^m2\int_1^{a_i}\rho_\varepsilon^h(t)dt
\end{equation}
where $a_i'$s are the generalized angles at the vertex $f^*$ in
the triangle $\triangle f^*f'^*v$ and
$\rho_\varepsilon(t)=\int_0^t\cos(\sqrt{\varepsilon}x)dx$ (see the
definition in \S 2).

\begin{theorem}\label{thm:generalized-circle-pattern} Under the assumption above, for any
$(\varepsilon,\varepsilon,\delta)\in\{-1,0,1\}^3$ and $\theta:E\to
\mathring{I}_\delta$, the map from $(J_{\varepsilon\delta})^{V^*}$
to $\mathbb{R}^{V^*}$ sending $r$ to $K_h$ is a smooth embedding.
\end{theorem}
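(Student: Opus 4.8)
The plan is to follow the standard variational strategy used throughout the paper and in \cite{l4}: build a locally convex (or concave) energy function on the space of metrics whose gradient is the curvature map $r\mapsto K_h$, and then invoke the classical fact that the gradient of a strictly convex function is injective. First I would work one quadrilateral at a time. For a fixed quadrilateral $(v,v',f^*,f'^*)$ with the prescribed generalized angle $\theta(vv')$ at $v$, the two edge lengths $x=r(f^*)$ and $y=r(f'^*)$ determine a type $(\varepsilon,\varepsilon,\delta)$ generalized triangle $\triangle f^*f'^*v$, hence two generalized angles $a=a(f^*)$ and $a'=a(f'^*)$ as smooth functions of $(x,y)$. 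The derivative cosine law (Lemma \ref{thm:derivative-cosine}) gives an explicit symmetric formula for the Jacobian $\partial(a,a')/\partial(x,y)$; in particular the matrix $\left(\frac{\partial a}{\partial y}\right) = \left(\frac{\partial a'}{\partial x}\right)$ after multiplying each angle-differential by the weight $\rho_\varepsilon^h$ evaluated at the respective angle. This symmetry is exactly the closed-form condition needed to produce, on each quadrilateral, a smooth function $W(x,y)$ with $\frac{\partial W}{\partial x} = 2\int_1^{a}\rho_\varepsilon^h(t)\,dt$ and $\frac{\partial W}{\partial y} = 2\int_1^{a'}\rho_\varepsilon^h(t)\,dt$ (up to an additive constant, fixed by integrating along a path from a basepoint). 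Summing $W$ over all quadrilaterals in $(\Sigma,G)$ and collecting terms at each $f^*\in V^*$ yields a global energy $\mathcal{E}: (J_{\varepsilon\delta})^{V^*}\to\mathbb{R}$ whose gradient at $f^*$ is precisely $K_h(f^*)$ as defined in (\ref{fml:K_h}).

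The second step is to show $\mathcal{E}$ is strictly convex (or concave, depending on the sign conventions carried from \S2) on $(J_{\varepsilon\delta})^{V^*}$. Its Hessian is a sum over quadrilaterals of the local $2\times 2$ Hessians of $W$, i.e. of the matrices $\left(\rho_\varepsilon^h(a)\frac{\partial a}{\partial x}, \ \rho_\varepsilon^h(a)\frac{\partial a}{\partial y}; \ \rho_\varepsilon^h(a')\frac{\partial a'}{\partial x}, \ \rho_\varepsilon^h(a')\frac{\partial a'}{\partial y}\right)$. One needs each such local matrix to be positive (resp. negative) definite. This reduces, via the explicit derivative cosine law, to a sign computation on the coefficients appearing there — the same computation that underlies the circle-packing results (Theorem \ref{thm:generalized-circle-packing}) and the Penner-type result (Theorem \ref{thm:generalized-Penner}), now restricted to the two-variable "degenerate triangle" situation where the third angle $\theta(vv')$ is held fixed. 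Because only a single triangle is involved per quadrilateral (not a closed-up triangulation), there is no edge-cycle or curvature-cone obstruction here, which is why the image is the full $\mathbb{R}^{V^*}$-free statement is replaced by the cleaner claim that the map is merely a smooth embedding.

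The third step is routine once convexity is in hand: a $C^1$ map that is the gradient of a strictly convex function on a convex (or even just connected, with locally positive-definite Hessian) open subset of $\mathbb{R}^{V^*}$ is injective, and since its Jacobian (the Hessian of $\mathcal{E}$) is everywhere nondegenerate, the inverse function theorem makes it an open map, hence a smooth embedding. I would note that $(J_{\varepsilon\delta})^{V^*}$ is indeed convex in each of the cases $\varepsilon\delta\in\{-1,0,1\}$, since $J_\sigma$ is either $\mathbb{R}$ or $\mathbb{R}_{>0}$ by (\ref{fml:J_sigma}), so no extra care is needed for the domain.

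The main obstacle I anticipate is the definiteness of the local Hessian in all ten type combinations $(\varepsilon,\varepsilon,\delta)$ simultaneously and uniformly in the parameter $h\in\mathbb{R}$. The whole point of the paper's setup is that the derivative cosine law is "uniform," so in principle one formula covers all cases; but the sign of the weight $\rho_\varepsilon^h(t)$ and of the cosine-law coefficients can depend subtly on $\varepsilon$ and on whether the relevant generalized lengths are positive or negative (recall generalized lengths may be negative). I would isolate this as a lemma about the $2\times 2$ matrix coming from the derivative cosine law with one angle frozen, proved by the explicit formulas from \S2, and expect that to be where the real work lies; the rest of the argument (path integration to build $W$, summation over quadrilaterals, the convexity-implies-injectivity principle) is standard and parallels \cite{l4}, \cite{l3}, \cite{bs}.
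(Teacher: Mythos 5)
Your overall plan (a per-quadrilateral potential whose gradient is the weighted angle, summed over quadrilaterals and fed into Lemma \ref{thm:convex}) is the paper's plan, but there is a genuine gap at the first step: the $1$-form you propose is \emph{not} closed in the original variables $(x,y)=(r(f^*),r(f'^*))$, so the local potential $W(x,y)$ with $\partial W/\partial x=2\int_1^{a}\rho_\varepsilon^h$ and $\partial W/\partial y=2\int_1^{a'}\rho_\varepsilon^h$ does not exist for general $h$. Indeed, label the triangle $\triangle f^*f'^*v$ so that $\theta_1,\theta_2$ are the angles at $f^*,f'^*$ and $l_1=r(f'^*)$, $l_2=r(f^*)$ are the opposite edges, with $\theta_3=\theta(vv')$ frozen. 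Inverting the $2\times 2$ block of the derivative cosine law (\ref{fml:derivative1}) one gets $\frac{\partial \theta_1}{\partial l_1}=\frac{\varepsilon}{c\,(\varepsilon^2-s^2)\,\tau_{\varepsilon\delta}(l_1)}$ and $\frac{\partial \theta_2}{\partial l_2}=\frac{\varepsilon}{c\,(\varepsilon^2-s^2)\,\tau_{\varepsilon\delta}(l_2)}$, where $c=-1/\sqrt{-\det G_l}$ and $s=\tau'_{\varepsilon\varepsilon}(l_3)$. Equality of mixed partials of your $W$ would therefore force $\rho_\varepsilon^h(\theta_1)/\tau_{\varepsilon\delta}(l_1)=\rho_\varepsilon^h(\theta_2)/\tau_{\varepsilon\delta}(l_2)$, while the sine law (\ref{fml:sine}) gives $\rho_\varepsilon(\theta_1)/\tau_{\varepsilon\delta}(l_1)=\rho_\varepsilon(\theta_2)/\tau_{\varepsilon\delta}(l_2)$; dividing, your symmetry requirement becomes $\rho_\varepsilon^{h-1}(\theta_1)=\rho_\varepsilon^{h-1}(\theta_2)$, which fails for a generic triangle unless $h=1$. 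In particular for $h=0$ (the Bobenko--Springborn case) the weighted angle map is not a gradient in the radius variables, and the whole construction collapses.

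The missing idea is a simultaneous change of variable on the length side: the paper sets $w_i=\int_1^{l_i}\tau_{\varepsilon\delta}^{h-1}(t)\,dt$ as in (\ref{fml:pattern w}), together with $a_i=\int_1^{\theta_i}\rho_\varepsilon^h(t)\,dt$. In these variables the off-diagonal entries of the relevant matrix are $\varepsilon\,\tau_{\varepsilon\delta}^h(l_1)\rho_\varepsilon^{-h}(\theta_1)$ and $\varepsilon\,\tau_{\varepsilon\delta}^h(l_2)\rho_\varepsilon^{-h}(\theta_2)$, which the sine law identifies for \emph{every} $h$; this is exactly Lemma \ref{thm:closed-form-pattern}, giving a closed form $a_1dw_2+a_2dw_1$ and a strictly concave $F_{\theta_3,h}$ (concavity reduces to the easy positive definiteness of the matrix with diagonal $\tau'_{\varepsilon\varepsilon}(l_3)$ and off-diagonal $\varepsilon$, not to a delicate case analysis as you anticipated). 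Since $r\mapsto w(r)$ is a diffeomorphism, proving that $w\mapsto K_h$ is a smooth embedding yields the theorem; also note that the concavity argument is applied on $w(\mathcal{E}_{\varepsilon,\delta}(\theta))$, the image of the set of radii for which all the required generalized triangles exist, rather than on all of $(J_{\varepsilon\delta})^{V^*}$ as you assumed, so the convexity discussion belongs to that set in the $w$-coordinates.
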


Bobenko-Springborn's circle pattern theorem (Theorem
\ref{thm:circle-pattern}) in the hyperbolic geometry corresponds
to $(\varepsilon,\varepsilon,\delta)=(1,1,1)$ and $h=0$ in Theorem
\ref{thm:generalized-circle-pattern}. Bobenko-Springborn \cite{bs}
also showed that the image of $\{K\}$ is an explicit open convex
polytope in $\mathbb{R}^{V^*}$. It is an interesting question to
investigate the images of $\{K_h\}$ in the generalized setting.

We remark that there is a discrete curvature flow for each case in
Theorem \ref{thm:generalized-circle-pattern}. Let $r_i:=r(f^*_i).$
Then
$$\frac{dr_i(t)}{dt}=-K_h(f^*_i)(\frac12e^{r_i}-\frac12\varepsilon\delta
e^{-r_i})^{1-h}.$$ We have shown that the above equation is the
negative gradient flow of a strictly concave down function.
Similar situation have been considered before by Hazel. In
\cite{ha}, he considered the flow for cases when $h=0$ and
$(\varepsilon,\delta,\varepsilon)=(1,1,1)$, $(1,1,-1)$ or
$(1,1,0).$

\section{A proof of Theorem \ref{thm:generalized-Penner}}

We give a proof of Theorem \ref{thm:generalized-Penner} in this
section. The proof consists of two parts. In the first part we
show that Penner's map $\Psi$ is an embedding. Then we determine
its image. As in \cite{l3}, the rigidity follows from the
following well-known fact together with the cosine law for
decorated ideal triangle.

\begin{lemma}\label{thm:convex} If $X$ is an open convex set in $\mathbb{R}^n$
and $f:X\to\mathbb{R}$ is smooth strictly convex, then the
gradient $\nabla f:X\to\mathbb{R}^n$ is injective. Furthermore, if
the Hessian of $f$ is positive definite for all $x \in X$, then
$\nabla f$ is a smooth embedding.
\end{lemma}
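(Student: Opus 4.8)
The statement to prove is Lemma~\ref{thm:convex}, the standard fact that the gradient of a smooth strictly convex function on an open convex set is injective, and is an embedding when the Hessian is everywhere positive definite. Let me sketch a proof plan.

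For the injectivity claim, the plan is to exploit the fact that strict convexity is captured by the behavior of $f$ along line segments, which is legitimate since $X$ is convex. Given two distinct points $x, y \in X$, consider the function $g(t) = f(x + t(y-x))$ on $[0,1]$; since $f$ is strictly convex, so is $g$, hence $g'$ is strictly increasing, so $g'(1) > g'(0)$. Writing these out via the chain rule gives $\langle \nabla f(y), y - x\rangle > \langle \nabla f(x), y-x \rangle$, i.e. $\langle \nabla f(y) - \nabla f(x), y - x\rangle > 0$. In particular $\nabla f(y) \neq \nabla f(x)$, which is injectivity.

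For the second claim, I would first observe that if the Hessian $\operatorname{Hess} f(x)$ is positive definite everywhere then $f$ is in particular strictly convex (the one-variable restriction $g$ above has $g'' > 0$), so $\nabla f$ is already injective by the first part. It remains to upgrade injectivity to "smooth embedding." Since $\nabla f$ is smooth and its Jacobian at each point is exactly $\operatorname{Hess} f(x)$, which is positive definite and hence invertible, the inverse function theorem shows $\nabla f$ is a local diffeomorphism. A smooth injective local diffeomorphism from a manifold to $\mathbb{R}^n$ is an open map onto its image and a homeomorphism onto that image, hence a smooth embedding. That completes the argument.

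There is really no serious obstacle here; this is a textbook lemma and the only mild point requiring care is making sure "embedding" is interpreted correctly — namely that injectivity plus local diffeomorphism gives that $\nabla f$ is a homeomorphism onto its (open) image, so that one does not need properness. I would simply remark that an injective local homeomorphism is automatically a homeomorphism onto its image with the subspace topology, which is all that "smooth embedding" requires here. No compactness or completeness hypothesis is needed.
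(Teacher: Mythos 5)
Your proof is correct: restricting $f$ to the segment from $x$ to $y$ (which lies in $X$ by convexity) and using that the derivative of a strictly convex function of one variable is strictly increasing yields $\langle \nabla f(y)-\nabla f(x),\,y-x\rangle>0$, hence injectivity, and the positive definite Hessian together with the inverse function theorem makes $\nabla f$ an injective local diffeomorphism, hence a homeomorphism onto its open image and therefore a smooth embedding. The paper gives no proof of this lemma at all---it is quoted as a well-known fact, following the cited earlier work---and your argument is precisely the standard one being invoked, so there is nothing to compare beyond noting that your write-up correctly fills in the omitted details.
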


To begin, recall that $(S,T)$ is an ideally triangulated surface
with sets of edges, triangles and cusps given by $E,F,V.$ We
assume that $\chi(S)<0.$

Following Penner \cite{p}, we will produce a smooth
parametrization of the decorated Teichm\"uller space
$T_c(S)\times\mathbb{R}^V_{>0}$ by $\mathbb{R}^E$ using the edge
lengths. From the derivative cosine law for decorated ideal
triangle, we will construct a smooth strictly concave down
function $H$ on $\mathbb{R}^E$ so that its gradient is Penner's
map $\Psi$. Then by Lemma \ref{thm:convex}, the map
$\Psi:T_c(S)\times\mathbb{R}^V_{>0}\to\mathbb{R}^E$ is an
embedding. To determine the image
$\Psi(T_c(S)\times\mathbb{R}^V_{>0}),$ we study the degenerations
of decorated ideal triangles. The strategy of the proof is the
same as that in \cite{l3}.

\subsection{Penner's length parametrization of
$T_c(S)\times\mathbb{R}^V_{>0}$}\label{3.1} For each decorated
hyperbolic metric $(d,r)\in T_c(S)\times\mathbb{R}^V_{>0},$ where
$r=(r_1,...r_{|V|})$, one replaces each edge $e\in E$ by the
geodesic $e^*$ in the metric $d$ and constructs for each cusp
$v_i\in V$ a horocyclic disk $B_{r_i}(v_i)$ centered at $v_i$
whose circumference $\partial B_{r_i}(v_i)$ has length
$r_i=r(v_i).$ Now, the \it length coordinate \rm
$l_{d,r}\in\mathbb{R}^E$  of $(d,r)$ is defined as follows. Given
$r: V \to \mathbb{R}_{>0}$, realize each triangle $\Delta u v w$
in $T$ by a decorated ideal hyperbolic triangle with generalized
angles at $u,v,w$ being $r(u), r(v), r(w)$. Then $l_{d,r}(e)$ is
the generalized edge length of the edge $e = uv$ in the triangle
$\Delta uvw$.
 In this way, Penner
defined a length map
\begin{align*}
L:T_c(S)\times\mathbb{R}^V_{>0}&\to\mathbb{R}^E\\
(d,r)&\to l_{d,r}.
\end{align*}

\begin{lemma} [Penner \cite{p}] The length map
$L:T_c(S)\times\mathbb{R}^V_{>0}\to\mathbb{R}^E$ is a
diffeomorphism.
\end{lemma}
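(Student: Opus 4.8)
The plan is to follow Penner's original strategy, adapted to the generalized triangle framework developed in Section 2. The map $L$ goes between smooth manifolds of the same dimension $|E|$ (recall that for an ideal triangulation $2|E| = 3|F|$ and $\chi(S) = |V| - |E| + |F| = |V| - |E|/2 < 0$, while $\dim T_c(S) = -3\chi(S) + 2|V|$... actually the clean statement is $\dim(T_c(S) \times \mathbb{R}^V_{>0}) = |E|$, which one verifies by the Euler characteristic count). So it suffices to show $L$ is injective, a local diffeomorphism, and proper, or alternatively to produce an explicit smooth inverse. First I would recall the key local fact: given any decorated ideal triangle, the three generalized edge lengths are unrestricted — they can be any triple in $\mathbb{R}^3$ — and conversely a triple of real numbers assigned to the three edges determines the decorated ideal triangle uniquely. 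This is precisely the content of the cosine law for type $(0,0,0)$ triangles (Penner's $\lambda$-length formalism, where $\lambda = e^{l/2}$), and it is the assertion that the edge-length-to-(triangle) correspondence is a bijection at the level of a single face.

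Next I would assemble these local bijections into a global one. Given a vector $l \in \mathbb{R}^E$, for each face $\sigma = \Delta uvw$ of $T$ build the decorated ideal triangle $\sigma^*$ with edge lengths prescribed by $l$ restricted to the three edges of $\sigma$. One then glues these decorated triangles along shared edges by hyperbolic isometries. The crucial point is that the gluing is consistent: two triangles sharing an edge $e$ both realize $e$ with the same generalized length $l(e)$, hence can be glued along a complete geodesic, and the decoration (horocycle) at each cusp must be checked to be well-defined — this uses that the generalized length, being defined via the signed distance between the horodisks, is symmetric in the two triangles and the resulting gluing pattern around each cusp $v$ closes up to give a complete hyperbolic surface with a horocycle of some circumference $r_v > 0$ at $v$. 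Completeness at the cusps is automatic from the ideal-triangle gluing. This produces the inverse map $l \mapsto (d, r)$, and smoothness of both directions follows from the smoothness of the cosine law formulas.

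The step I expect to be the main obstacle — or at least the one requiring the most care — is verifying that the gluing around each cusp genuinely produces a \emph{complete} metric with a well-defined positive horocycle length, i.e., that there is no monodromy obstruction and that the horocyclic segments cut off in successive triangles around $v$ fit together into a closed horocycle. In Penner's setting this is handled by the $h$-length (horocyclic segment length) bookkeeping: one shows the horocyclic segments are additive around the vertex link and the total is finite and positive. I would reproduce that argument, noting that the generalized-length conventions of Section 2 reduce exactly to Penner's $\lambda$-length relations when all three vertices are ideal, so Penner's Lemma applies verbatim; the only thing to add is the observation that our normalization of the generalized angle at an ideal vertex as \emph{twice} the horocyclic intersection length is consistent with the circumference $r_v$ being recovered as the sum of those intersection lengths around $v$. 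Once consistency of the gluing and completeness are established, injectivity and surjectivity of $L$ are immediate from the faceswise bijection, and the inverse function theorem (or direct inspection of the formulas) gives that $L$ is a diffeomorphism.
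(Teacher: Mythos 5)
Your proposal is correct in substance, but it is organized differently from the paper's own (very short) argument, so a comparison is in order. The paper does not redo the gluing construction in the proof of this lemma: it first records the scaling equivariance $L(d,\lambda r)=L(d,r)-(2\ln\lambda)(1,\dots,1)$, uses it to reduce to decorated metrics whose length vector lies in $\mathbb{R}^E_{>0}$, and then invokes Penner's theorem that $L|:L^{-1}(\mathbb{R}^E_{>0})\to\mathbb{R}^E_{>0}$ is a diffeomorphism (proved by the isometric-gluing construction); the general case follows by translating any $l\in\mathbb{R}^E$ into the positive cone. You instead reconstruct the inverse map on all of $\mathbb{R}^E$ at once: the face-wise bijection between decorated ideal triangles and arbitrary triples of real signed lengths, gluing along edges so that the horocycles match, and the completeness/closed-horocycle check at each cusp. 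This is essentially Penner's construction (and it is exactly what the paper itself uses later, in the construction of $\widetilde{\Psi}$ in \S 2.2), so your route buys self-containedness at the cost of redoing the geometric verification that the paper delegates to \cite{p}; the paper's route buys brevity via the scaling trick but proves nothing new beyond the citation.

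Two points in your write-up deserve care. First, the sentence ``completeness at the cusps is automatic from the ideal-triangle gluing'' is false as stated: gluing ideal triangles along edges with arbitrary shears generally produces incomplete structures, and completeness is precisely the nontrivial condition. You do repair this later, and the repair is the right one: the horocycle-matching condition forces the holonomy around each puncture to preserve a horocycle centered at the fixed ideal point, hence to be parabolic (it translates the horocycle by the sum of the horocyclic arc lengths, which is positive), so the cusp is complete and the horocycle closes up with circumference $r_v>0$. Make sure this is stated as the argument, not as an afterthought. Second, when gluing two decorated triangles along a common edge $e$, the isometry is not unique a priori (there is a one-parameter shear freedom); it is the requirement that the horocycles at one endpoint of $e$ coincide that pins the gluing down uniquely, and equality of the two signed lengths $l(e)$ then guarantees the horocycles at the other endpoint also coincide. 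This uniqueness is what makes the face-wise bijection globalize to injectivity of $L$. With these two points made explicit, your argument is a complete and correct proof, equivalent to the one the paper cites.
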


\begin{proof} By the cosine law for decorated ideal triangle, the
map $L$ satisfies for all $\lambda\in\mathbb{R}_{>0}$
\begin{align}\label{fml:length-map}
L(d,\lambda r)=L(d,r)-(2\ln\lambda)(1,1,...,1).
\end{align} Thus, it suffices to deal with those decorated metrics
$(d,r)$ so that $r$ are small, i.e., $L(d,r)\in\mathbb{R}^E_{>0}.$
In this case, Penner proved that
$L|:L^{-1}(\mathbb{R}^E_{>0})\to\mathbb{R}^E_{>0}$ is a
diffeomorphism by a direct geometric construction using isometric
gluing decorated ideal triangles. By (\ref{fml:length-map}), it
follows that $L$ is a diffeomorphism.
\end{proof}

\subsection{Penner's map $\Psi$ is a coordinate}

Recall that for a decorated ideal triangle $\triangle$ with edges
$e_1,e_2,e_3$ of lengths $l_1,l_2,l_3$ and opposite generalized
angles $\theta_1,\theta_2,\theta_3$, the cosine law obtained by
Penner \cite{p}
 says:
$$\frac{e^{l_i}}2=\frac2{\theta_j\theta_k},\   \ \frac{\theta_i^2}4=e^{l_i-l_j-l_k}.$$
where $\{i,j,k\}=\{1,2,3\}.$ The derivative cosine law expressing
$l_i$ in terms of $(\theta_1, \theta_2, \theta_3)$ says
$$ \frac{\partial l_i}{\partial \theta_i} =0,   \  \  \
\frac{\partial l_i}{\partial \theta_j} = -\frac{1}{\theta_j}.$$

 Let $x_i=\frac12(\theta_j+\theta_k-\theta_i)$ (or
$\theta_i=x_j+x_k$). We call $x_i$ the \it radius invariant \rm at
the edge $e_i$ in the triangle $\triangle$.

Using the derivative cosine law, we have,
\begin{lemma} \label{thm:closed-form-ideal} Under the same assumption as above, the differential
1-form $\omega=\sum_{i=1}^3x_idl_i$ is closed in $\mathbb{R}^3$ so
that its integration $W(l)=\int_0^l\omega$ is strictly concave
down in $\mathbb{R}^3$. Furthermore,
\begin{align}\label{fml:W}
\frac{\partial W}{\partial l_i}=x_i
\end{align}
\end{lemma}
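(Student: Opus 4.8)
The plan is to verify closedness of $\omega=\sum_{i=1}^3 x_i\,dl_i$ by checking the symmetry of the partial derivatives, then compute the Hessian of $W$ and show it is negative definite. First I would re-express $\omega$ in terms of the generalized angles $\theta=(\theta_1,\theta_2,\theta_3)$ using the substitution $x_i=\tfrac12(\theta_j+\theta_k-\theta_i)$, and use the derivative cosine law $\partial l_i/\partial\theta_i=0$, $\partial l_i/\partial\theta_j=-1/\theta_j$ to pass between the $l$-coordinates and the $\theta$-coordinates. Since the cosine law $\tfrac{\theta_i^2}{4}=e^{l_i-l_j-l_k}$ shows that $(l_1,l_2,l_3)\mapsto(\theta_1,\theta_2,\theta_3)$ (equivalently $\ln\theta_i=\tfrac12(l_j+l_k-l_i)+\ln 2$) is a diffeomorphism of $\mathbb{R}^3$ onto $\mathbb{R}^3_{>0}$, it is legitimate to pull everything back to the $\theta$-chart.

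The key computation is $\partial x_i/\partial l_j$. From $\ln\theta_i=\tfrac12(l_j+l_k-l_i)+\ln 2$ we get $\partial\theta_i/\partial l_j=\tfrac12\theta_i$ for $j\ne i$ and $\partial\theta_i/\partial l_i=-\tfrac12\theta_i$; hence $\partial x_i/\partial l_j=\tfrac12(\partial\theta_j/\partial l_j+\partial\theta_k/\partial l_j-\partial\theta_i/\partial l_j)$, and one checks this is symmetric in $i,j$. Concretely, writing $\theta_i=x_j+x_k$ one finds $\frac{\partial x_i}{\partial l_j}$ equals a symmetric expression (e.g. of the form $-\tfrac14(\theta_i+\theta_j-\theta_k)=-\tfrac12 x_k$ for $i\ne j$, and an analogous diagonal term), so the matrix $\left(\frac{\partial x_i}{\partial l_j}\right)$ is symmetric. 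This gives $d\omega=0$ on all of $\mathbb{R}^3$, so $W(l)=\int_0^l\omega$ is well-defined and path-independent, and by construction $\frac{\partial W}{\partial l_i}=x_i$, which is (\ref{fml:W}). The Hessian of $W$ is exactly this symmetric matrix $\left(\frac{\partial x_i}{\partial l_j}\right)$.

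It then remains to show $\operatorname{Hess}(W)$ is negative definite. I would compute it explicitly: differentiating $x_i=\tfrac12(\theta_j+\theta_k-\theta_i)$ and using $\partial\theta_m/\partial l_n=\pm\tfrac12\theta_m$ (sign $-$ iff $m=n$), the matrix entries are linear combinations of $\theta_1,\theta_2,\theta_3$ with rational coefficients. One way to finish is to change variables: since $l_i\mapsto x_i$ is a diffeomorphism, write $W$ as a function and show $\frac{\partial l_i}{\partial x_j}$ is symmetric negative definite, or directly factor $\operatorname{Hess}(W)=-A^T D A$ with $D$ a positive diagonal matrix (here $D=\operatorname{diag}(\theta_1,\theta_2,\theta_3)$ up to constants and $A$ the constant matrix relating $dl$ and $d\ln\theta$), which makes negative definiteness transparent because $A$ is invertible.

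The main obstacle I expect is purely bookkeeping: getting the signs and the $\tfrac12$ factors right in the chain rule between the $l$-, $\theta$-, and $x$-coordinates, and then recognizing the resulting $3\times3$ matrix as $-A^TDA$ for an explicit invertible $A$. There is no conceptual difficulty — once the matrix is written down, strict concavity on all of $\mathbb{R}^3$ follows from the positivity of $\theta_1,\theta_2,\theta_3$ and the invertibility of the (constant) linear map between $dl$ and $d\ln\theta$. A minor point to state carefully is that the domain is genuinely all of $\mathbb{R}^3$ (no degeneracy constraints), which is what makes the decorated ideal triangle case cleaner than the other nine types.
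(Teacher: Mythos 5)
Your route is genuinely different from the paper's and, once the signs are straightened out, it works. The paper never uses the explicit exponential form of the cosine law at this point: it computes the matrix $H=[\partial l_a/\partial x_b]$ directly from the derivative cosine law $\partial l_i/\partial\theta_i=0$, $\partial l_i/\partial\theta_j=-1/\theta_j$, observes that $-H$ is symmetric with entries $m_{ij}=1/\theta_k>0$ and $m_{ii}=m_{ij}+m_{ik}$, and verifies positive definiteness of $-H$ by computing the principal minors; concavity of $W$ then follows since the Hessian of $W$ is $H^{-1}$. You instead exploit the special feature of the type $(0,0,0)$ case that $\ln\theta_i$ is an \emph{affine} function of $(l_1,l_2,l_3)$, so the Hessian $[\partial x_i/\partial l_j]$ factors as a congruence of a positive diagonal matrix by a constant invertible matrix. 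That is a clean and transparent argument for this particular lemma (it gives the Hessian explicitly), whereas the paper's method is the one that transfers uniformly to the other nine triangle types via the derivative cosine law, which is presumably why the authors phrase it that way.

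One concrete correction is needed, and it is not cosmetic, because the entire content of the lemma is the sign of the Hessian. From $\theta_i^2/4=e^{l_i-l_j-l_k}$ the relation is $\ln(\theta_i/2)=\tfrac12(l_i-l_j-l_k)$, not $\tfrac12(l_j+l_k-l_i)$ as you wrote; hence $\partial\theta_i/\partial l_i=+\tfrac12\theta_i$ and $\partial\theta_i/\partial l_j=-\tfrac12\theta_i$ for $j\neq i$. Carrying your flipped signs through gives off-diagonal Hessian entries $-\tfrac12x_k$ and diagonal entries $+\tfrac14(\theta_1+\theta_2+\theta_3)$, i.e.\ a \emph{positive} definite Hessian, which would prove convexity rather than the claimed concavity. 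With the correct signs one gets $\partial x_i/\partial l_j=\tfrac12x_k$ off the diagonal and $-\tfrac14(\theta_1+\theta_2+\theta_3)$ on the diagonal; since $x_k$ may be negative, definiteness is not visible entrywise, and your proposed factorization is exactly the right way to finish: writing $B$ for the constant symmetric matrix with diagonal entries $\tfrac12$ and off-diagonal entries $-\tfrac12$ (eigenvalues $1,1,-\tfrac12$, so invertible), one has $d\theta=\mathrm{diag}(\theta_1,\theta_2,\theta_3)B\,dl$ and $dx=-B\,d\theta$, hence the Hessian of $W$ equals $-B\,\mathrm{diag}(\theta_1,\theta_2,\theta_3)\,B$, which is manifestly symmetric (giving closedness of $\omega$) and negative definite on all of $\mathbb{R}^3$. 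So fix the sign of the basic relation and the proof goes through as you planned; note also that your factorization cannot come out in the form $-A^TDA$ from the relation as you stated it, which is where the slip would have surfaced.
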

\begin{proof} Consider the matrix
 $H=[\partial l_{a}/\partial x_b]_{3 \times 3}$. The closeness of $\omega$ is equivalent to
 that $H$ is symmetric.  The strictly concavity of $W$ will be a consequence of the positive definiteness of $H$.
We establish these two properties for $H$ as follows.
 Assume that
indices $\{i,j,k\}=\{1,2,3\}$. By definition,
$\frac{\partial}{\partial x_i}=\frac{\partial}{\partial \theta_j}
+\frac{\partial}{\partial \theta_k}$. It follows from the
derivative cosine law that,
$$\frac{\partial l_i}{\partial x_i} = -(\frac{1}{\theta_j}
+\frac{1}{\theta_k}),$$ and
$$ \frac{\partial l_i}{\partial x_j} = -\frac{1}{\theta_k}$$
which is symmetric in $i,j$. This shows that the  matrix $H$ is
symmetric. Furthermore, the negative matrix $-H$ is of the form
$[m_{ab}]_{3 \times 3}$ where $ m_{ab} =m_{ba} >0$ and
$m_{ii}=m_{ij}+m_{ik}$.  The determinant of such a matrix can be
calculated easily as $4 m_{11}m_{22} m_{33} >0$, and the
determinant of a principal $2 \times 2$ submatrix is
$m_{ii}m_{jj}-m_{ij}^2 = ( m_{ij}+m_{ik}) (m_{ij}+m_{jk})-m_{ij}^2
>0$. It follows that $-H$ is positive definite.
\end{proof}

By the construction in $\S$ \ref{3.1}, it suffices to show that
the composition $\widetilde{\Psi}=\Psi\circ
L^{-1}:\mathbb{R}^E\to\mathbb{R}^E$ is a smooth embedding. The map
$\widetilde{\Psi}$ is constructed explicitly as follows. For each
$l\in\mathbb{R}^E$ and each triangle $\sigma\in F$ realize
$\sigma$ by an ideal hyperbolic triangle together with horocycles
centered at three vertices so that the generalized edge length of
an edge $e$ in $\sigma$ is $l(e).$ Now isometrically glue these
ideal hyperbolic triangles along edges so that the horocycles
match. The result is a complete finite area hyperbolic metric on
the surface $S$ together with a horocycle at each cusp. For each
edge $e\in E,$ the value $\widetilde{\Psi}(l)(e)$ is equal to
$\frac{b+c-a}2+\frac{b'+c'-a'}2$ where $a,a',b,b',c,c'$ are
generalized angles  facing and adjacent to $e$ in Figure
\ref{fig:simplicial-coordinate}. Thus
\begin{align}\label{fml:F}
\widetilde{\Psi}(l)(e)=r_f(e)+r_{f'}(e)
\end{align}
where $f,f'$ are the decorated ideal triangles sharing the edge
$e$ and $r_f(e),r_{f'}(e)$ are the radius invariants at the edge
$e$ in $f,f'$ respectively.

Given a vector $l\in\mathbb{R}^E,$ define the energy $H(l)$ of $l$
to be $$H(l)=\sum_{\{i,j,k\}\in F}W(l(e_i),l(e_j),l(e_k))$$ where
the sum is over all triangles $\{i,j,k\}$ in $F$ with edges
$e_i,e_j,e_k.$ By definition and Lemma
\ref{thm:closed-form-ideal}, $H:\mathbb{R}^E\to\mathbb{R}$ is
smooth and strictly concave down when Hessian is negative
definite. Furthermore, by (\ref{fml:W}) and (\ref{fml:F}),
$$\frac{\partial H}{\partial l(e_i)}=\widetilde{\Psi}(l)(e_i)$$ i.e., $\nabla H=\widetilde{\Psi}.$
It follows from Lemma \ref{thm:convex}, that
$\widetilde{\Psi}:\mathbb{R}^E\to\mathbb{R}^E$ is a smooth
embedding. Therefore $\Psi$ is a smooth embedding.

\begin{center}
\includegraphics[scale=.35]{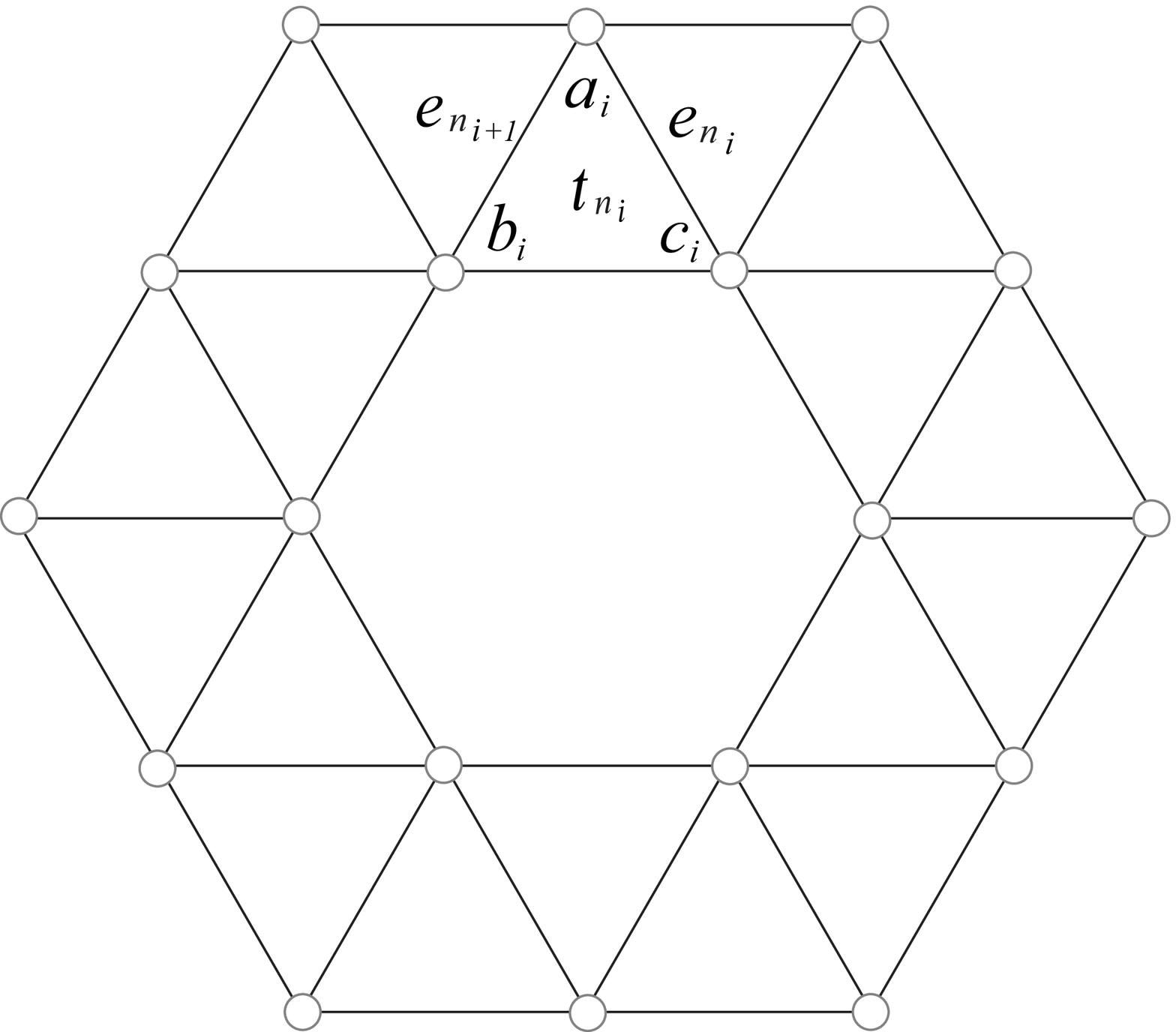}
\end{center}
\caption{\label{fig:cycle}}

\subsection{The image of Penner's map} Let $\Omega$ be the convex
subset of $\mathbb{R}^E$ given by $\Omega=\{z\in
\mathbb{R}^E|\sum_{i=1}^pz(e_{n_i})>0,$ whenever
$(e_{n_1},t_{n_1},e_{n_2},t_{n_2},...,e_{n_p},t_{n_p},e_{n_1})$ is
an edge cycle\}. To show that
$\widetilde{\Psi}(\mathbb{R}^E)=\Omega,$ due to convexity of
$\Omega,$ it suffices to prove that
$\widetilde{\Psi}(\mathbb{R}^E)$ is both open and closed in
$\Omega.$

First to see that $\widetilde{\Psi}(\mathbb{R}^E)\subset\Omega,$ take
$l\in\mathbb{R}^E$ and an edge cycle
$(e_{n_1},t_{n_1},e_{n_2},t_{n_2},$ $...,e_{n_p},t_{n_p},e_{n_1})$
as shown in Figure \ref{fig:cycle}.

Let the generalized angles in the decorated ideal triangle
$t_{n_i}$ in the metric $L^{-1}(l)$ be $a_i,b_i,c_i,$ where $b_i$
faces the edge $e_{n_i},$ $c_i$ faces the edge $e_{n_{i+1}}$ and
$a_i$ is adjacent to $e_{n_i},e_{n_{i+1}}.$ Then the contribution
to $\sum_{j=1}^p\widetilde{\Psi}(l)(e_{n_j})$ from
$e_{n_i},e_{n_{i+1}}$ in triangle $t_{n_i}$ is given by
$\frac{a_i+b_i-c_i}2+\frac{a_i+c_i-b_i}2=a_i.$ Thus
\begin{align}\label{fml:sum}
\sum_{j=1}^p\widetilde{\Psi}(l)(e_{n_j})=\sum_{j=1}^pa_i>0
\end{align}
due to $a_i>0$ for all $i.$ It follows that
$\widetilde{\Psi}(\mathbb{R}^E)$ is open in $\Omega$ since
$\widetilde{\Psi}:\mathbb{R}^E\to\mathbb{R}^E$ is just proved to
be an embedding.

It remains to prove that $\widetilde{\Psi}(\mathbb{R}^E)$ is
closed in $\Omega.$ The closeness of
$\widetilde{\Psi}(\mathbb{R}^E)$ in $\Omega$ means to show that if
a sequence $\{l^{(m)}\in\mathbb{R}^E\}_{m=0}^{\infty}$ satisfies
$\lim_{m \to \infty}\widetilde{\Psi}(l^{(m)})=z\in\Omega,$ then
$\{l^{(m)}\}_{m=0}^\infty$ contains a subsequence converging to a
point in $\mathbb{R}^E.$ Given a decorated hyperbolic metric
$l^{(m)}\in\mathbb{R}^E$ on $(S,T)$ and a generalized angle
$\theta$, let $\theta^{(m)}\in\mathbb{R}^{3F}$ be the generalized
angles of the decorated ideal triangles in $(S,T)$ in the metric
$l^{(m)}$. By taking a subsequence if necessary, we may assume
that $\lim_{m \to \infty}\widetilde{\Psi}(l^{(m)})$ converges in
$[-\infty, \infty]^E$ and that for each angle $\theta_i,$ the
limit $\lim_{m \to \infty}\theta_i^{(m)}$ exists in $[0, \infty]$.

\begin{lemma}\label{thm:infinity} For all $i,$ $\lim_{m \to
\infty}\theta_i^{(m)}\in[0, \infty).$
\end{lemma}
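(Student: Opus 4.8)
The plan is to control \emph{all} the generalized angles at once by summing Penner's coordinate $\widetilde{\Psi}$ over every edge of the triangulation. Recall from (\ref{fml:F}) that for an edge $e\in E$ one has $\widetilde{\Psi}(l)(e)=r_f(e)+r_{f'}(e)$, where $f,f'$ are the two decorated ideal triangles incident to $e$ in the metric $L^{-1}(l)$ and $r_f(e)$ is the radius invariant of $e$ in $f$. Inside a single decorated ideal triangle with generalized angles $\theta_1,\theta_2,\theta_3$ opposite to its edges $e_1,e_2,e_3$, the radius invariant at $e_i$ is $\frac12(\theta_j+\theta_k-\theta_i)$, so the three radius invariants of the triangle sum to $\frac12(\theta_1+\theta_2+\theta_3)$.

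First I would sum (\ref{fml:F}) over all $e\in E$. Each triangle $f\in F$ contributes exactly its three radius invariants, one for each of its sides, which gives
\begin{align*}
\sum_{e\in E}\widetilde{\Psi}(l)(e)=\sum_{f\in F}\bigl(r_f(e_1^f)+r_f(e_2^f)+r_f(e_3^f)\bigr)=\frac12\sum_{f\in F}\bigl(\theta_1^f+\theta_2^f+\theta_3^f\bigr).
\end{align*}
In other words, $\sum_{e\in E}\widetilde{\Psi}(l)(e)$ is half the sum of all $3|F|$ generalized angles of the triangulation in the metric $L^{-1}(l)$.

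Now apply this with $l=l^{(m)}$. By hypothesis $\widetilde{\Psi}(l^{(m)})\to z\in\Omega\subset\mathbb{R}^E$, so the left-hand side converges to the finite number $\sum_{e\in E}z(e)$; in particular there is a constant $C$ with $\sum_{e\in E}\widetilde{\Psi}(l^{(m)})(e)\le C$ for all $m$. Every generalized angle of a generalized hyperbolic triangle is positive, so each summand $\theta_i^{f,(m)}$ above is nonnegative, and hence $0<\theta_i^{f,(m)}\le 2C$ for every $i$, every $f$ and every $m$. Letting $m\to\infty$ yields $\lim_{m\to\infty}\theta_i^{(m)}\le 2C<\infty$, which together with the obvious lower bound $0$ is exactly the assertion of Lemma \ref{thm:infinity}.

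There is no real obstacle here once the edge-sum identity is in place: the lemma is then immediate from positivity of generalized angles and finiteness of the limit vector $z$. The only point needing minor attention is the bookkeeping when an edge happens to be incident to the same triangle on both sides, but such a triangle still contributes each of its three sides exactly once to $\sum_{e\in E}\widetilde{\Psi}(l)(e)$, so the identity is unchanged. Equivalently, one could argue cusp by cusp: the edge-cycle computation leading to (\ref{fml:sum}) shows that the sum of $\widetilde{\Psi}(l)$ over the edges around a cusp $v$ equals the total generalized angle at $v$, which is therefore bounded, and each individual angle at $v$ is bounded by this total. Note that the full strength $z\in\Omega$ is not needed for this lemma --- only that $z$ is finite --- and it will enter only later, in the analysis of the degenerating triangles.
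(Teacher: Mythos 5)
Your proof is correct, and it takes a slightly different route from the paper's. The paper argues by contradiction and locally: assuming some angle $\theta_1^{(m)}\to\infty$, it picks an edge cycle passing through the two edges of the triangle adjacent to that angle and invokes the computation (\ref{fml:sum}) to conclude that the corresponding sum $\sum_i z(e_{n_i})$ would have to be infinite, contradicting $z\in\Omega\subset\mathbb{R}^E$. You instead sum (\ref{fml:F}) over \emph{all} edges, using that the three radius invariants of a decorated ideal triangle add up to half its angle sum, to get the global identity $\sum_{e\in E}\widetilde{\Psi}(l^{(m)})(e)=\tfrac12\sum_{f,i}\theta_i^{f,(m)}$; boundedness of the left side plus positivity of the generalized angles then bounds every angle uniformly by $2C$, with no contradiction argument and no choice of cycle needed. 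Both proofs rest on the same two ingredients --- the radius-invariant bookkeeping (each triangle-side incidence counted once, valid also for self-glued triangles, as you note) and positivity of generalized angles --- and both in fact use only the finiteness of $z$, not the defining inequalities of $\Omega$, a point you make explicit. Your alternative cusp-by-cusp remark is essentially the paper's edge-cycle computation specialized to vertex links, so it is closest in spirit to the original; the global-sum version is arguably cleaner, while the paper's local version has the minor advantage of isolating exactly which angle forces the blow-up, a style of argument reused in the subsequent closedness analysis.
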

\begin{proof} If otherwise, say $\lim_{m \to
\infty}\theta_1^{(m)}=\infty$ for some angle $\theta_1$. Let
$e_1,e_2$ be the edges adjacent to the angle $\theta_1$ in the
triangle $t$. Take an edge cycle
$(e_{n_1},t_{n_1},e_{n_2},t_{n_2},...,$ $e_{n_p},t_{n_p},e_{n_1})$
which contains $(e_1,t,e_2)$ as a part. Then by the calculation as
in (\ref{fml:sum}), $$\sum_{i=1}^pz(e_{n_i})=\lim_{m \to
\infty}\sum_{i=1}^p\widetilde{\Psi}(l^{(m)})(e_{n_i})\geq\lim_{m
\to \infty}\theta_i^{(m)}=\infty.$$ This contradicts the
assumption that $z\in\Omega.$
\end{proof}

Now, we finish the proof by contradiction as follows. If $\lim_{m
\to \infty}l^{(m)}$ were not in $\mathbb{R}^E,$ there would exist
an edge $e\in E$ so that $\lim_{m \to
\infty}l^{(m)}(e)=\pm\infty.$ Let $t$ be a triangle adjacent to
$e.$ Let $\theta_1^{(m)},\theta_2^{(m)}$ be the generalized angles
in $t$ adjacent to $e$ in the metric $l^{(m)}.$ By the cosine law,
\begin{align}\label{fml:exp}
\exp(l^{(m)}(e))=\frac4{\theta_1^{(m)}\theta_2^{(m)}}
\end{align}
and $\theta_1^{(m)},\theta_2^{(m)}\in(0,\infty).$

Case 1. If $\lim_{m \to \infty}l^{(m)}(e)=-\infty,$ then $\lim_{m
\to \infty}\exp(l^{(m)}(e))=0.$ By (\ref{fml:exp}), one of $\lim_{m
\to \infty}\theta_i^{(m)}$ must be $\infty.$ But this contradicts
Lemma \ref{thm:infinity}.

Case 2. If $\lim_{m \to \infty}l^{(m)}(e)=\infty,$ then $\lim_{m \to
\infty}\exp(l^{(m)}(e))=\infty.$ Since Lemma \ref{thm:infinity}
shows that $\theta_1^{(m)}$ and $\theta_2^{(m)}$ are bounded, by
(\ref{fml:exp}), one of the limits $\lim_{m \to
\infty}\theta_i^{(m)}$ must be zero for $i=1$ or $2.$ Say $\lim_{m
\to \infty}\theta_1^{(m)}=0.$ Let $e_1$ be the other edge in the
triangle $t$ so that $e_1, e$ are adjacent to the generalized angle
$\theta_1$. Let $\theta_3$ be the third angle in $t,$ facing $e.$
Then by the cosine law that
$\exp(l^{(m)}(e_1))=\frac4{\theta_1^{(m)}\theta_3^{(m)}}$ and Lemma
\ref{thm:infinity} on the boundedness of $\theta_3^{(m)},$ we
conclude that $\lim_{m \to \infty}l^{(m)}(e_1)=\infty.$ To
summarize, from $\lim_{m \to \infty}l^{(m)}(e)=\infty$ and any
triangle $t$ adjacent to $e$, we conclude that there is an edge
$e_1$ and an angle $\theta_1$ adjacent to $e, e_1$ in $t$ so that
$\lim_{m \to \infty}l^{(m)}(e_1)=\infty$ and $\lim_{m \to
\infty}\theta_1^{(m)}=0.$

Applying this procedure to $e_1$ and the triangle $t_1$ adjacent
to $e_1$ from the side other than the side that $t$ lies. We
obtain the next angle, say $\theta_2$ and edge $e_2$ in $t_1$ so
that $\lim_{m \to \infty}l^{(m)}(e_2)=\infty$ and $\lim_{m \to
\infty}\theta_2^{(m)}=0.$ Since there are only finite number of
edges and triangles, this procedure will produce an edge cycle
$(e_{n_1},t_{n_1},e_{n_2},t_{n_2},...,e_{n_k},t_{n_k},e_{n_1})$ in
$T$ so that

(i) $\lim_{m \to \infty}l^{(m)}(e_{n_i})=\infty$ for each $i$,

(ii) $\lim_{m \to \infty}\theta_i^{(m)}=0$ for each $i$, where
$\theta_i$ is the angle in triangle $t_{n_i}$ adjacent to
$e_{n_i}$ and $e_{n_{i+1}}.$

By (\ref{fml:sum}),  $$\sum_{i=1}^kz(e_{n_i})=\lim_{m \to
\infty}\sum_{i=1}^k\widetilde{\Psi}(l^{(m)})(e_{n_i})=\lim_{m \to
\infty}\sum_{i=1}^k\theta_i^{(m)}=0.$$ This contradicts the
assumption that $z\in\Omega.$

\section{The derivative cosine law}

We give a unified approach to all cosine laws and sine laws in
this section. The derivative of the cosine laws are also
determined. Most of the proof are straightforward checking and
will be delayed to the appendix.

Assume that a generalized hyperbolic triangle of type
$(\varepsilon_1,\varepsilon_2,\varepsilon_3)\in\{-1,0,1\}^3$ has
generalized angles $\theta_1,\theta_2,\theta_3$ and opposite
generalized edge lengthes $l_1,l_2,l_3.$ There are ten different
types of generalized hyperbolic triangles as shown in Figure
\ref{fig:ten-triangles}. The relationships between $l_i'$s and
$\theta_i'$s are expressed in the cosine law and the sine law. To
state them, we introduce the two functions $\rho_{\varepsilon},
\tau_s$ depending on $\varepsilon,\theta,s,l\in \mathbb{R}$:
\begin{align}
\rho_\varepsilon(\theta)&=\int_0^\theta\cos(\sqrt{\varepsilon} x)dx
=\frac{1}{\sqrt{\varepsilon}}\sin(\sqrt{\varepsilon} x),\\
\tau_s(l)&=\frac12e^l-\frac12se^{-l}.
\end{align}
To be more explicitly,
$$\begin{array}{lll}
\rho_1(\theta)=\sin(\theta),&\rho_0(\theta)=\theta,&\rho_{-1}(\theta)=\sinh(\theta,\\
\tau_1(l)=\sinh(l),&\tau_0(l)=\frac12e^l,&\tau_{-1}(l)=\cosh(l).
\end{array}$$

A simple calculation shows,
\begin{align*}
\rho'_\varepsilon(\theta)&:=\frac{\partial
\rho_\varepsilon(\theta)}{\partial \theta}
=\cos(\sqrt{\varepsilon} \theta),\\
\tau'_s(l)&:=\frac{\partial \tau_s(l)}{\partial
l}=\frac12e^l+\frac12se^{-l}.
\end{align*}

\begin{lemma}[The cosine laws and the sine laws]\label{thm:cosine-law} For a generalized hyperbolic triangle
of type $(\varepsilon_1,\varepsilon_2,\varepsilon_3)\in\{-1,0,1\}^3$
with generalized angles $\theta_1,\theta_2,\theta_3$ and opposite
generalized edge lengths $l_1,l_2,l_3$, for $\{i,j,k\}=\{1,2,3\}$,
the following hold:
\begin{align}
\tau'_{\varepsilon_j\varepsilon_k}(l_i)&=
\frac{\rho'_{\varepsilon_i}(\theta_i)+\rho'_{\varepsilon_j}(\theta_j)\rho'_{\varepsilon_k}(\theta_k)}
{\rho_{\varepsilon_j}(\theta_j)\rho_{\varepsilon_k}(\theta_k)},\label{fml:cosine1}\\
2\rho_{\varepsilon_i}^2(\frac{\theta_i}2)&=
\frac{\tau'_{\varepsilon_j\varepsilon_k}(l_i)-\frac12\varepsilon_je^{l_j-l_k}-\frac12\varepsilon_ke^{l_k-l_j}}
{\tau_{\varepsilon_k\varepsilon_i}(l_j)\tau_{\varepsilon_i\varepsilon_j}(l_k)},\label{fml:cosine2}\\
\rho'_{\varepsilon_i}(\theta_i)&=
\frac{-\varepsilon_i\tau'_{\varepsilon_j\varepsilon_k}(l_i)+\tau'_{\varepsilon_k\varepsilon_i}(l_j)\tau'_{\varepsilon_i\varepsilon_j}(l_k)}
{\tau_{\varepsilon_k\varepsilon_i}(l_j)\tau_{\varepsilon_i\varepsilon_j}(l_k)},\label{fml:cosine3}\\
\frac{\rho_{\varepsilon_i}(\theta_i)}{\tau_{\varepsilon_j\varepsilon_k}(l_i)}&=
\frac{\rho_{\varepsilon_j}(\theta_j)}{\tau_{\varepsilon_k\varepsilon_i}(l_j)}\label{fml:sine}
\end{align}
\end{lemma}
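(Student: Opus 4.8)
The plan is to prove all ten cases uniformly by passing to the Minkowski model. Let $\mathbb{R}^{2,1}$ carry the Lorentzian bilinear form $\langle\cdot,\cdot\rangle$ of signature $(+,+,-)$. I would encode a generalized triangle $\triangle$ by two mutually dual triples of vectors: each bounding geodesic $L_i$ is represented by a spacelike unit normal $N_i$ (so $\langle N_i,N_i\rangle=1$), the three of them chosen to point out of $\triangle$ coherently; and each generalized vertex $v_i$ is represented by a vector $V_i$ with $\langle V_i,V_i\rangle=-\varepsilon_i$, where $V_i=v_i\in\mathbb{H}^2$ if $\varepsilon_i=1$, $V_i$ is the outward spacelike unit normal of the geodesic $L_{jk}$ associated with $v_i$ if $\varepsilon_i=-1$, and $V_i$ is the null vector representing the ideal point $v_i$ normalized by the horocycle $\partial B_{v_i}$ if $\varepsilon_i=0$. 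The whole configuration is then recorded by the two Gram matrices $[\langle N_a,N_b\rangle]$ and $[\langle V_a,V_b\rangle]$.

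The geometric content is concentrated in two ``dictionary'' identities: for $\{i,j,k\}=\{1,2,3\}$,
\[
\langle N_j,N_k\rangle=-\rho'_{\varepsilon_i}(\theta_i),\qquad \langle V_j,V_k\rangle=-\tau'_{\varepsilon_j\varepsilon_k}(l_i).
\]
The first records the generalized angle between two sides: $-\cos\theta_i$ when $L_j,L_k$ cross inside $\mathbb{H}^2$, the constant $-1=-\rho'_0(\theta_i)$ when they are asymptotic (correctly independent of the horocyclic quantity $\theta_i$), and $-\cosh\theta_i$ when they are ultraparallel at distance $\theta_i$. The second records the generalized edge length $l_i$ of the side $L_i$ whose endpoints are $v_j,v_k$; the ``negative of the overlap'' clause in the definition of generalized length is precisely what makes it hold for every signed real value of $l_i$. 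Verifying these two identities is the substance of the argument: it amounts to inspecting the finitely many combinations of types of the two relevant vertices, resp.\ sides, each being an elementary computation in two-dimensional hyperbolic trigonometry, i.e.\ in one configuration of Figure \ref{fig:ten-triangles}. The delicate point, which I expect to be the main obstacle, is matching the horodisk normalization of null vectors with the ``TWICE the horocyclic length'' convention in the definition of the generalized angle; once the conventions are set, each check is short.

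Granting the dictionary, the remainder is linear algebra with the Lorentzian cross product. By the duality of geodesics and points, $V_i$ is a scalar multiple of $N_j\times N_k$ and $N_i$ is a scalar multiple of $V_j\times V_k$. Since
\[
\langle N_j\times N_k,N_j\times N_k\rangle=-\bigl(\langle N_j,N_j\rangle\langle N_k,N_k\rangle-\langle N_j,N_k\rangle^2\bigr)=(\rho'_{\varepsilon_i}(\theta_i))^2-1=-\varepsilon_i\rho_{\varepsilon_i}^2(\theta_i),
\]
using the identity $(\rho'_\varepsilon)^2-1=-\varepsilon\rho_\varepsilon^2$ (immediate from $\rho'_\varepsilon(\theta)=\cos(\sqrt\varepsilon\,\theta)$ and $\rho_\varepsilon(\theta)=\frac1{\sqrt\varepsilon}\sin(\sqrt\varepsilon\,\theta)$), and since $\langle V_i,V_i\rangle=-\varepsilon_i$, we get $V_i=\pm(N_j\times N_k)/\rho_{\varepsilon_i}(\theta_i)$; likewise $\langle V_j\times V_k,V_j\times V_k\rangle=(\tau'_{\varepsilon_j\varepsilon_k}(l_i))^2-\varepsilon_j\varepsilon_k=\tau_{\varepsilon_j\varepsilon_k}^2(l_i)$ by the identity $(\tau'_s)^2-\tau_s^2=s$, so $N_i=\pm(V_j\times V_k)/\tau_{\varepsilon_j\varepsilon_k}(l_i)$ (in particular these denominators are automatically nonzero for a genuine $\triangle$). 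Now the Binet--Cauchy identity $\langle a\times b,c\times d\rangle=-(\langle a,c\rangle\langle b,d\rangle-\langle a,d\rangle\langle b,c\rangle)$ applied to $\langle V_j,V_k\rangle=\langle N_k\times N_i,N_i\times N_j\rangle/(\rho_{\varepsilon_j}(\theta_j)\rho_{\varepsilon_k}(\theta_k))$ and to $\langle N_j,N_k\rangle=\langle V_k\times V_i,V_i\times V_j\rangle/(\tau_{\varepsilon_k\varepsilon_i}(l_j)\tau_{\varepsilon_i\varepsilon_j}(l_k))$, followed by substitution of the two dictionary identities, yields (\ref{fml:cosine1}) and (\ref{fml:cosine3}) respectively. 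For the sine law (\ref{fml:sine}), substitute the expression for $N_i$ into that for $V_i$ and use $(V_k\times V_i)\times(V_i\times V_j)=-\det(V_1,V_2,V_3)\,V_i$; comparing the coefficient of $V_i$ for $i=1,2,3$ shows that the product $\rho_{\varepsilon_i}(\theta_i)\tau_{\varepsilon_k\varepsilon_i}(l_j)\tau_{\varepsilon_i\varepsilon_j}(l_k)$ is independent of $i$, which is exactly (\ref{fml:sine}).

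Finally, (\ref{fml:cosine2}) is a purely algebraic consequence of (\ref{fml:cosine3}). Using $2\rho_\varepsilon^2(\theta/2)=\frac1\varepsilon\bigl(1-\cos(\sqrt\varepsilon\,\theta)\bigr)=(1-\rho'_\varepsilon(\theta))/\varepsilon$ for $\varepsilon\neq0$ (and its limit $\theta^2/2$ for $\varepsilon=0$), I would subtract (\ref{fml:cosine3}) from $1$ and divide by $\varepsilon_i$; the claim then reduces to the elementary identity
\[
\tau_{\varepsilon_k\varepsilon_i}(l_j)\tau_{\varepsilon_i\varepsilon_j}(l_k)-\tau'_{\varepsilon_k\varepsilon_i}(l_j)\tau'_{\varepsilon_i\varepsilon_j}(l_k)=-\frac12\varepsilon_i\varepsilon_j e^{l_j-l_k}-\frac12\varepsilon_i\varepsilon_k e^{l_k-l_j},
\]
which is immediate from $\tau_s(l)=\frac12e^l-\frac12se^{-l}$ and $\tau'_s(l)=\frac12e^l+\frac12se^{-l}$. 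The case $\varepsilon_i=0$ is recovered by letting $\varepsilon_i\to0$, or checked directly from the three configurations with $v_i$ ideal. Throughout, the signs $\pm$ are fixed once and for all by the ``points out of $\triangle$'' orientation conventions, and tracking them is routine.
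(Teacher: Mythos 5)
Your Minkowski-model plan is genuinely different from the paper's proof (the paper simply checks the uniform formulas against the explicit classical laws for each of the ten types listed in Appendix A), and for the six types without ideal vertices it is sound --- indeed the paper's own remark after Lemma \ref{thm:cosine-law} attributes exactly this hyperboloid strategy to Thurston's notes for those six cases. The genuine gap is in the four decorated types, i.e.\ whenever some $\varepsilon_i=0$. Your ``linear algebra'' step $V_i=\pm(N_j\times N_k)/\rho_{\varepsilon_i}(\theta_i)$ is justified by comparing norms, but when $\varepsilon_i=0$ both $V_i$ and $N_j\times N_k$ are null, so the norm computation reads $0=0$ and determines nothing: the proportionality constant is precisely the nontrivial geometric statement tying the horodisk normalization of the null vector to the horocyclic arc cut off by $L_j,L_k$ (with the paper's ``twice the length'' convention). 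Note that your two dictionary identities cannot possibly suffice here: at an ideal vertex $\langle N_j,N_k\rangle=-1$ carries no information about $\theta_i$ at all, and $\langle V_i,V_i\rangle=0$ carries none about the horocycle, yet $\theta_i$ appears in (\ref{fml:cosine1}), (\ref{fml:cosine2}) and (\ref{fml:sine}). So an additional identity of the form $N_j\times N_k=\pm\,\rho_0(\theta_i)\,V_i$ for a correctly normalized null $V_i$ must be stated and proved; this is essentially Penner's light-cone computation for decorated triangles, and it is the real content of the decorated cases --- including the type $(0,0,0)$ law $\theta_i^2/4=e^{l_i-l_j-l_k}$ that the paper actually needs in the proof of Theorem \ref{thm:generalized-Penner}.

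Two smaller points. First, your recovery of (\ref{fml:cosine2}) for $\varepsilon_i=0$ ``by letting $\varepsilon_i\to0$'' is not an argument as it stands: the type is discrete, and a limiting argument would require setting up a continuous family of configurations in which both the generalized angle and the decoration converge appropriately --- none of which is in place; the fallback ``check directly in the three configurations'' just reverts to the paper's case-by-case verification. (For $\varepsilon_i=\pm1$ your algebraic reduction of (\ref{fml:cosine2}) to (\ref{fml:cosine3}) via $2\rho_\varepsilon^2(\theta/2)=(1-\rho'_\varepsilon(\theta))/\varepsilon$ and the $\tau\tau-\tau'\tau'$ identity is fine.) Second, the sign bookkeeping (coherent outward normals, future-pointing/outward choices for $V_i$, and the sign of the Lorentzian Binet--Cauchy identity) is dismissed as routine but is exactly where such proofs usually go wrong for the hyperideal vertices and for negative generalized edge lengths; it deserves an explicit convention and a one-line verification per type. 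As written, the proposal proves the lemma for the six non-ideal types and only sketches a hoped-for extension to the four decorated ones; to be complete you must either prove the null-vertex normalization lemma or, as the paper does, verify the decorated cases against the known formulas directly.
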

\begin{proof} In the Appendix A, the cosine law and the sine law for
each type of the ten generalized hyperbolic triangles are listed.
We check directly that all the formulas there fit the uniform
formulas (\ref{fml:cosine1})-(\ref{fml:sine}).
\end{proof}

\begin{remark} The identity (\ref{fml:sine}) is called the \it sine law. \rm
The formula (\ref{fml:cosine2}) is stronger than the formula
(\ref{fml:cosine3}). They are equivalent if $\varepsilon_i=\pm1$.
If $\varepsilon_i=0$, the formula (\ref{fml:cosine3}) is trivial
while the formula (\ref{fml:cosine2}) expresses $\theta$ in term
of lengths $l$.

For the six cases of generalized triangle without ideal vertices,
there is a unified strategy to derive the cosine law
(\ref{fml:cosine1}) and (\ref{fml:cosine3}) by using the
hyperboloid model. This unified strategy is essentially given in
\cite{th2} p74-82. But this method does not work for the other
four cases of generalized triangle with ideal vertices.
\end{remark}

The concepts of Gram matrix and angle Gram matrix of a generalized
hyperbolic triangle are defined as follows. For a generalized
hyperbolic triangle of type
$(\varepsilon_1,\varepsilon_2,\varepsilon_3)\in\{-1,0,1\}^3$ with
generalized angles $\theta_1,\theta_2,\theta_3$ and opposite
generalized edge lengths $l_1,l_2,l_3$, its Gram matrix is
$$
G_l:=-\left(
\begin{array}{ccc}
\varepsilon_1&\tau'_{\varepsilon_1\varepsilon_2}(l_3)&\tau'_{\varepsilon_3\varepsilon_1}(l_2) \\
\tau'_{\varepsilon_1\varepsilon_2}(l_3)&\varepsilon_2&\tau'_{\varepsilon_2\varepsilon_3}(l_1) \\
\tau'_{\varepsilon_3\varepsilon_1}(l_2)&\tau'_{\varepsilon_2\varepsilon_3}(l_1)&\varepsilon_3
\end{array}
\right)
$$
and its angle Gram matrix is
$$
G_\theta:=-\left(
\begin{array}{ccc}
-1&\rho'_{\varepsilon_3}(\theta_3)&\rho'_{\varepsilon_2}(\theta_2) \\
\rho'_{\varepsilon_3}(\theta_3)&-1&\rho'_{\varepsilon_1}(\theta_1) \\
\rho'_{\varepsilon_2}(\theta_2)&\rho'_{\varepsilon_1}(\theta_1)&-1
\end{array}
\right).
$$

\begin{lemma}\label{thm:determinant}
\begin{align}
\det
G_l=-(\tau_{\varepsilon_k\varepsilon_i}(l_j)\tau_{\varepsilon_i\varepsilon_j}(l_k)
\rho_{\varepsilon_i}(\theta_i))^2 \label{fml:det G_l}\\
\det
G_\theta=-(\rho_{\varepsilon_j}(\theta_j)\rho_{\varepsilon_k}(\theta_k)\tau_{\varepsilon_j\varepsilon_k}(l_i))^2
\label{fml:det G_a}
\end{align}
\end{lemma}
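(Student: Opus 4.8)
The plan is to prove both identities by brute force: expand the two $3\times3$ determinants, substitute the cosine laws of Lemma~\ref{thm:cosine-law}, and clean up using only two elementary identities for the functions $\rho_\varepsilon,\tau_s$.

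The preliminary step is to record that $\tau'_s(l)^2-\tau_s(l)^2=s$ and $\rho'_\varepsilon(\theta)^2+\varepsilon\,\rho_\varepsilon(\theta)^2=1$ for every $s,\varepsilon\in\{-1,0,1\}$; both follow at once from the definitions (the second reads $1-\rho'_0(\theta)^2=0$ when $\varepsilon=0$). For (\ref{fml:det G_a}): a direct expansion of the symmetric matrix inside $G_\theta$ gives
\[
\det G_\theta=1-\rho'_{\varepsilon_1}(\theta_1)^2-\rho'_{\varepsilon_2}(\theta_2)^2-\rho'_{\varepsilon_3}(\theta_3)^2-2\rho'_{\varepsilon_1}(\theta_1)\rho'_{\varepsilon_2}(\theta_2)\rho'_{\varepsilon_3}(\theta_3).
\]
Fix $\{i,j,k\}=\{1,2,3\}$. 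Since $\rho_{\varepsilon_m}(\theta_m)>0$ for a genuine generalized triangle, I may substitute $\tau'_{\varepsilon_j\varepsilon_k}(l_i)=(\rho'_{\varepsilon_i}(\theta_i)+\rho'_{\varepsilon_j}(\theta_j)\rho'_{\varepsilon_k}(\theta_k))/(\rho_{\varepsilon_j}(\theta_j)\rho_{\varepsilon_k}(\theta_k))$ from the cosine law (\ref{fml:cosine1}) into $\tau_{\varepsilon_j\varepsilon_k}(l_i)^2=\tau'_{\varepsilon_j\varepsilon_k}(l_i)^2-\varepsilon_j\varepsilon_k$, and then replace $\varepsilon_j\rho_{\varepsilon_j}(\theta_j)^2$ and $\varepsilon_k\rho_{\varepsilon_k}(\theta_k)^2$ via the second elementary identity; a one-line computation then shows $(\rho_{\varepsilon_j}(\theta_j)\rho_{\varepsilon_k}(\theta_k)\tau_{\varepsilon_j\varepsilon_k}(l_i))^2=-\det G_\theta$. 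This proves (\ref{fml:det G_a}) simultaneously for all $\{i,j,k\}$, with no case distinctions.

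For (\ref{fml:det G_l}) the scheme is parallel. Expanding $G_l$,
\[
\det G_l=-\big(\varepsilon_1\varepsilon_2\varepsilon_3+2pqr-\varepsilon_1 r^2-\varepsilon_2 q^2-\varepsilon_3 p^2\big),\qquad p=\tau'_{\varepsilon_1\varepsilon_2}(l_3),\ q=\tau'_{\varepsilon_3\varepsilon_1}(l_2),\ r=\tau'_{\varepsilon_2\varepsilon_3}(l_1).
\]
Suppose first that $\varepsilon_i\neq0$ for some index $i$. Then $\rho_{\varepsilon_i}(\theta_i)^2=(1-\rho'_{\varepsilon_i}(\theta_i)^2)/\varepsilon_i$, and substituting the value of $\rho'_{\varepsilon_i}(\theta_i)$ from the cosine law (\ref{fml:cosine3}) — whose denominator is exactly $\tau_{\varepsilon_k\varepsilon_i}(l_j)\tau_{\varepsilon_i\varepsilon_j}(l_k)$ — together with $\tau_s(l)^2=\tau'_s(l)^2-s$, one finds $(\tau_{\varepsilon_k\varepsilon_i}(l_j)\tau_{\varepsilon_i\varepsilon_j}(l_k)\rho_{\varepsilon_i}(\theta_i))^2=-\det G_l$: the $\varepsilon_i$ in the denominator cancels a common factor $\varepsilon_i$ in the numerator of the resulting polynomial. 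Since $\det G_l$ is a fixed number and the three right-hand sides obtained by varying $i$ agree with one another by the sine law (\ref{fml:sine}), this establishes (\ref{fml:det G_l}) for every $\{i,j,k\}$ in all nine types with at least one non-ideal vertex.

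The remaining type $(\varepsilon_1,\varepsilon_2,\varepsilon_3)=(0,0,0)$ (the decorated ideal triangle) is where the division by $\varepsilon_i$ breaks down, and I expect this to be the only real wrinkle. Here I would compute directly: $\tau_0(l)=\tau'_0(l)=\tfrac12 e^l$ and $\rho_0(\theta)=\theta$, so $\det G_l=-2\,\tau'_0(l_1)\tau'_0(l_2)\tau'_0(l_3)=-\tfrac14 e^{l_1+l_2+l_3}$, whereas $(\tau_0(l_j)\tau_0(l_k)\rho_0(\theta_i))^2=\tfrac1{16}e^{2l_j+2l_k}\theta_i^2$; the two agree because Penner's cosine law $\theta_i^2/4=e^{l_i-l_j-l_k}$ — the case $\varepsilon_1=\varepsilon_2=\varepsilon_3=0$ of (\ref{fml:cosine2}) — gives $e^{2l_j+2l_k}\theta_i^2=4e^{l_1+l_2+l_3}$. (One could instead avoid the case split entirely by using (\ref{fml:cosine2}) and the half-angle relation $\rho_\varepsilon(\theta)^2=4\rho_\varepsilon(\theta/2)^2(1-\varepsilon\rho_\varepsilon(\theta/2)^2)$ in place of (\ref{fml:cosine3}) throughout, at the cost of a longer computation.) Beyond this bookkeeping around $\varepsilon_i=0$ the proof is routine polynomial algebra; conceptually, the two identities say nothing more than that $G_l$ and $G_\theta$ are the Gram matrices, in the hyperboloid model of $\mathbb{H}^2$, of the three generalized vertices and of the three edge normals of $\triangle$, which form mutually dual triples.
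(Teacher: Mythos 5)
Your proposal is correct and follows essentially the same route as the paper's Appendix~B: expand the two determinants, substitute the cosine laws (\ref{fml:cosine1}), (\ref{fml:cosine3}), and simplify with the identities $\tau_s'(l)^2-\tau_s(l)^2=s$ and $\rho_\varepsilon'(\theta)^2+\varepsilon\rho_\varepsilon(\theta)^2=1$. The only (minor) divergence is the ideal-vertex case of (\ref{fml:det G_l}): the paper handles an index with $\varepsilon_i=0$ uniformly via the stronger cosine law (\ref{fml:cosine2}), whereas you transfer the identity between indices by the sine law (\ref{fml:sine}) and treat the type $(0,0,0)$ triangle by a separate direct computation with Penner's law --- both of which are valid.
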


For a hyperbolic triangle, Lemma \ref{thm:determinant} is
well-known. We will prove it for a generalized triangle in Appendix
B.

By Lemma \ref{thm:determinant} and the sine laws (\ref{fml:sine}),
we see
\begin{align*}
M:=&\frac1{\sqrt{-\det G_l}} \left(
\begin{array}{ccc}
\tau_{\varepsilon_2\varepsilon_3}(l_1)&0&0 \\
0&\tau_{\varepsilon_3\varepsilon_1}(l_2)&0 \\
0&0&\tau_{\varepsilon_1\varepsilon_2}(l_3)
\end{array}
\right)\\
=&\frac1{\sqrt{-\det G_\theta}} \left(
\begin{array}{ccc}
\rho_{\varepsilon_1}(\theta_1)&0&0 \\
0&\rho_{\varepsilon_2}(\theta_2)&0 \\
0&0&\rho_{\varepsilon_3}(\theta_3)
\end{array}
\right).\\
\end{align*}

\begin{lemma}\label{thm:inverse matrix} $MG_lMG_\theta=I.$
\end{lemma}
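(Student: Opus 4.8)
The plan is to prove $MG_lMG_\theta = I$ by a direct matrix computation, using the cosine laws of Lemma \ref{thm:cosine-law} and the determinant formulas of Lemma \ref{thm:determinant} to identify the entries. Write $D_l = \mathrm{diag}(\tau_{\varepsilon_2\varepsilon_3}(l_1), \tau_{\varepsilon_3\varepsilon_1}(l_2), \tau_{\varepsilon_1\varepsilon_2}(l_3))$ and $D_\theta = \mathrm{diag}(\rho_{\varepsilon_1}(\theta_1), \rho_{\varepsilon_2}(\theta_2), \rho_{\varepsilon_3}(\theta_3))$, so that $M = \frac{1}{\sqrt{-\det G_l}} D_l = \frac{1}{\sqrt{-\det G_\theta}} D_\theta$. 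Then
$$MG_lMG_\theta = \frac{1}{\sqrt{-\det G_l}}\,\frac{1}{\sqrt{-\det G_\theta}}\, D_l G_l D_\theta G_\theta,$$
so it suffices to show $D_l G_l D_\theta G_\theta = \sqrt{-\det G_l}\sqrt{-\det G_\theta}\, I$. First I would compute the product $P := D_l G_l D_\theta$, which is a symmetric-looking $3\times 3$ matrix whose $(a,b)$ entry is $\tau_{\ldots}(l_a)$ times the $(a,b)$ entry of $-G_l$ times $\rho_{\varepsilon_b}(\theta_b)$; each such entry is a simple monomial in the $\tau$'s, $\tau'$'s and $\rho$'s. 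Then I would multiply $P$ by $G_\theta$ and check that the result is the stated scalar multiple of the identity.

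The key identities that make the off-diagonal entries vanish are exactly the cosine law (\ref{fml:cosine3}) written in the form
$$\rho'_{\varepsilon_i}(\theta_i)\,\tau_{\varepsilon_k\varepsilon_i}(l_j)\tau_{\varepsilon_i\varepsilon_j}(l_k) = -\varepsilon_i\tau'_{\varepsilon_j\varepsilon_k}(l_i) + \tau'_{\varepsilon_k\varepsilon_i}(l_j)\tau'_{\varepsilon_i\varepsilon_j}(l_k),$$
together with the sine law (\ref{fml:sine}) and its "dual" obtained by combining (\ref{fml:cosine1}) with (\ref{fml:cosine3}). For the diagonal entries, one uses (\ref{fml:det G_l}) and (\ref{fml:det G_a}) to recognize the resulting expression as $\sqrt{-\det G_l}\sqrt{-\det G_\theta}$. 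An efficient way to organize this: note that both $G_l$ and $G_\theta$ are, up to the diagonal conjugations by $D_l$ and $D_\theta$, essentially inverse to one another — indeed $G_l$ is built from the $\tau'$ (edge) data and $G_\theta$ from the $\rho'$ (angle) data, and the cosine laws are precisely the statement that the adjugate of one, suitably rescaled, is the other. So I would actually prove the cleaner identity $G_l^{-1} = \frac{1}{\det G_l} \mathrm{adj}(G_l)$ has the form $D_l^{-1} D_\theta^{-1}(\text{something}) G_\theta D_\theta^{-1} D_l^{-1}$ by comparing cofactors of $G_l$ with entries of $G_\theta$ via the cosine law, and then the Lemma follows by substitution.

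The main obstacle I expect is purely bookkeeping: keeping the cyclic index conventions $\{i,j,k\} = \{1,2,3\}$ straight across the two Gram matrices, whose off-diagonal entries are indexed in a "transposed" fashion ($G_l$ has $\tau'_{\varepsilon_1\varepsilon_2}(l_3)$ in the $(1,2)$ slot while $G_\theta$ has $\rho'_{\varepsilon_3}(\theta_3)$ there), and making sure the signs coming from the $-\varepsilon_i$ terms in (\ref{fml:cosine3}) and from the $-1$'s on the diagonal of $G_\theta$ combine correctly. Since Lemma \ref{thm:cosine-law} and Lemma \ref{thm:determinant} are already available, and this Lemma is a formal consequence of them, there is no geometric input needed here; the computation is a finite check and, as the authors note for the surrounding material, the routine verification can be relegated to an appendix. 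I would present the cofactor-comparison version in the text and note that the three off-diagonal cancellations are instances of (\ref{fml:cosine3}) and (\ref{fml:sine}), with the diagonal normalization instance of Lemma \ref{thm:determinant}.
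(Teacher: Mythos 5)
Your proposal is correct and is essentially the paper's own argument: the paper proves this lemma precisely by direct calculation from the cosine laws (\ref{fml:cosine1}), (\ref{fml:cosine3}) and the sine law (\ref{fml:sine}), with Lemma \ref{thm:determinant} supplying the normalization, which is exactly the computation you outline. The only slip is cosmetic: the clean cofactor statement you are after is $\mathrm{adj}(G_l)=-D_l\,G_\theta\,D_l$ (equivalently $G_l^{-1}=M G_\theta M$), not the $D_l^{-1}D_\theta^{-1}$-conjugated expression you wrote, but your entrywise check goes through as described.
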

This lemma is a consequence of the cosine laws and the sine laws
(\ref{fml:cosine1}), (\ref{fml:cosine3}) and (\ref{fml:sine}). It
is checked by direct calculation.

Let $y_1,y_2,y_3$ be three functions of variables $x_1,x_2,x_3$. Let
$A=(\frac{\partial y_i}{\partial x_j})_{3\times3}$ be the Jacobi
matrix. Then the differentials $dy_1,dy_2,dy_3$ and $dx_1,dx_2,dx_3$
satisfy
$$\left(
\begin{array}{ccc}
dy_1 \\
dy_2 \\
dy_3
\end{array}\right)
=A\left(
\begin{array}{ccc}
dx_1 \\
dx_2 \\
dx_3
\end{array}\right).
$$
\begin{lemma}[The derivative cosine law]\label{thm:derivative-cosine} For a generalized hyperbolic triangle
of type $(\varepsilon_1,\varepsilon_2,\varepsilon_3)\in\{-1,0,1\}^3$
with generalized angles $\theta_1,\theta_2,\theta_3$ and opposite
generalized edge lengths $l_1,l_2,l_3$, the differentials of $l'$s
and $\theta'$s satisfy the following relations:
\begin{align}
\left(
\begin{array}{ccc}
dl_1 \\
dl_2 \\
dl_3
\end{array}\right)
=&MG_l \left(
\begin{array}{ccc}
d\theta_1 \\
d\theta_2 \\
d\theta_3
\end{array}\right) \label{fml:derivative1} \\
\left(
\begin{array}{ccc}
d\theta_1 \\
d\theta_2 \\
d\theta_3
\end{array}\right)
=&MG_\theta \left(
\begin{array}{ccc}
dl_1 \\
dl_2 \\
dl_3
\end{array}\right). \label{fml:derivative2}
\end{align}
\end{lemma}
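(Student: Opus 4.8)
The plan is to prove only the first identity (\ref{fml:derivative1}) directly and to deduce the second one (\ref{fml:derivative2}) from it formally. Since $MG_l$ and $MG_\theta$ are $3\times 3$ matrices and Lemma~\ref{thm:inverse matrix} gives $(MG_l)(MG_\theta)=I$, and a one-sided inverse of a square matrix is two-sided, we also have $(MG_\theta)(MG_l)=I$; multiplying (\ref{fml:derivative1}) on the left by $MG_\theta$ then produces (\ref{fml:derivative2}). (One could equally well differentiate the single uniform formula (\ref{fml:cosine2}) to obtain (\ref{fml:derivative2}) first; I will use (\ref{fml:cosine1}) because it works just as uniformly for (\ref{fml:derivative1}).)

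To prove (\ref{fml:derivative1}) I would read the cosine law (\ref{fml:cosine1}), cleared of its denominator, as
\[
\tau'_{\varepsilon_j\varepsilon_k}(l_i)\,\rho_{\varepsilon_j}(\theta_j)\rho_{\varepsilon_k}(\theta_k)=\rho'_{\varepsilon_i}(\theta_i)+\rho'_{\varepsilon_j}(\theta_j)\rho'_{\varepsilon_k}(\theta_k),\qquad \{i,j,k\}=\{1,2,3\},
\]
which defines $l_i$ as a smooth function of $(\theta_1,\theta_2,\theta_3)$ because $\tau'_{\varepsilon_j\varepsilon_k}$ is strictly increasing (hence invertible) on the range $J_{\varepsilon_j\varepsilon_k}$ of $l_i$, its derivative being $\tau_{\varepsilon_j\varepsilon_k}>0$ there. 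I would then take exterior derivatives, using the elementary identities $d\rho_\varepsilon(\theta)=\rho'_\varepsilon(\theta)\,d\theta$, $\ d\rho'_\varepsilon(\theta)=\rho''_\varepsilon(\theta)\,d\theta=-\varepsilon\rho_\varepsilon(\theta)\,d\theta$, $\ d\tau'_s(l)=\tau_s(l)\,dl$, and collect the coefficients of $d\theta_i,d\theta_j,d\theta_k$. The $d\theta_i$-coefficient appears immediately; the $d\theta_j$-coefficient (and symmetrically the $d\theta_k$-one) collapses once one substitutes the Pythagorean-type identity $\varepsilon\rho_\varepsilon(\theta)^2+(\rho'_\varepsilon(\theta))^2=1$ (valid for $\varepsilon\in\{-1,0,1\}$) and re-applies (\ref{fml:cosine1}) with the indices cyclically permuted. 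The result is
\[
\tau_{\varepsilon_j\varepsilon_k}(l_i)\,dl_i=-\frac{\rho_{\varepsilon_i}(\theta_i)}{\rho_{\varepsilon_j}(\theta_j)\rho_{\varepsilon_k}(\theta_k)}\bigl(\varepsilon_i\,d\theta_i+\tau'_{\varepsilon_i\varepsilon_j}(l_k)\,d\theta_j+\tau'_{\varepsilon_k\varepsilon_i}(l_j)\,d\theta_k\bigr);
\]
since $\tau_{\varepsilon_j\varepsilon_k}(l_i)>0$ on $J_{\varepsilon_j\varepsilon_k}$ (a one-line sign check from $\tau''_s=\tau_s$), I may divide, obtaining $dl_i$ as a scalar multiple of the $i$-th row of $-G_l$ applied to $(d\theta_1,d\theta_2,d\theta_3)$, the scalar being $\rho_{\varepsilon_i}(\theta_i)/\bigl(\tau_{\varepsilon_j\varepsilon_k}(l_i)\rho_{\varepsilon_j}(\theta_j)\rho_{\varepsilon_k}(\theta_k)\bigr)$ up to sign.

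The last step is to identify this scalar with the $i$-th diagonal entry of $M$, for then the displayed relation is exactly the $i$-th coordinate of $MG_l\,d\theta$, i.e.\ (\ref{fml:derivative1}). For this I would combine the sine law (\ref{fml:sine}) --- which makes $\lambda:=\rho_{\varepsilon_m}(\theta_m)/\tau_{\varepsilon_{m+1}\varepsilon_{m+2}}(l_m)$ independent of $m$, so $\rho_{\varepsilon_m}(\theta_m)=\lambda\,\tau_{\varepsilon_{m+1}\varepsilon_{m+2}}(l_m)$ --- with the determinant formula (\ref{fml:det G_l}) of Lemma~\ref{thm:determinant}, which reads $\sqrt{-\det G_l}=\tau_{\varepsilon_k\varepsilon_i}(l_j)\,\tau_{\varepsilon_i\varepsilon_j}(l_k)\,\rho_{\varepsilon_i}(\theta_i)$; substituting, both the scalar above and the relevant diagonal entry $\tau_{\varepsilon_j\varepsilon_k}(l_i)/\sqrt{-\det G_l}$ of $M$ reduce to $\bigl(\lambda\,\tau_{\varepsilon_k\varepsilon_i}(l_j)\tau_{\varepsilon_i\varepsilon_j}(l_k)\bigr)^{-1}$. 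I expect no serious obstacle: the only genuine content is the algebraic collapse of the off-diagonal coefficients in the second step --- which rests entirely on $\varepsilon\rho_\varepsilon^2+(\rho'_\varepsilon)^2=1$ and on re-using the cosine law --- together with the bookkeeping that pins the common normalizing factor as $1/\sqrt{-\det G_l}$ via Lemma~\ref{thm:determinant} and the sine law. The pleasant feature is that this single computation is uniform in the type $(\varepsilon_1,\varepsilon_2,\varepsilon_3)$, so no case analysis among the ten triangles is needed.
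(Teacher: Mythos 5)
Your proposal is correct and follows essentially the same route as the paper's Appendix~B: differentiate the cleared form of the cosine law (\ref{fml:cosine1}), collapse the off-diagonal coefficients using the identity $\varepsilon\rho_\varepsilon^2+(\rho'_\varepsilon)^2=1$ (the paper's $\rho_\varepsilon\rho''_\varepsilon-\rho_\varepsilon'^2=-1$) together with a re-application of the cosine law, and then identify the resulting scalar with the diagonal entry of $M$ via the sine law and Lemma~\ref{thm:determinant}, which is exactly how the paper's two expressions for $M$ are reconciled. Deducing (\ref{fml:derivative2}) from (\ref{fml:derivative1}) by Lemma~\ref{thm:inverse matrix} is also the paper's argument, and your displayed intermediate formula (with $\tau'_{\varepsilon_i\varepsilon_j}(l_k)$, $\tau'_{\varepsilon_k\varepsilon_i}(l_j)$ as the off-diagonal coefficients) is the correct one.
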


We will prove the above lemma in Appendix B.

In the rest of the section, we establish the existence of a
generalized hyperbolic triangle of type
$(\varepsilon,\varepsilon,\delta)\in\{-1,0,1\}^3$ with two given
edge lengths $l_1,l_2$ and a generalized angle $\theta$ between
them, where the generalized angles opposite to $l_1,l_2$ have type
$\varepsilon$ and $\theta$ has type $\delta.$ Recall
$\mathring{I}_\delta$ and $J_{\varepsilon\delta}$ are in
(\ref{fml:I_delta}) (\ref{fml:J_sigma}).

Fix type $(\varepsilon,\varepsilon,\delta)\in\{-1,0,1\}^3$. For a
given $\theta\in \mathring{I}_\delta,$ let's introduce the set
$\mathcal{D}_{\varepsilon,\delta}(\theta)=\{(l_1,l_2)\in(J_{\varepsilon\delta})^2|$
there exists a generalized hyperbolic triangle of type
$(\varepsilon,\varepsilon,\delta)$ with two edge lengths $l_1,
l_2$ so that the generalized angle between them is $\theta$ of
type $\delta$\}.

\begin{lemma}\label{thm:realize}
\begin{enumerate}
\item If $\varepsilon=1$ or $0,$ then $\mathcal{D}_{\varepsilon,\delta}(\theta)=(J_{\varepsilon\delta})^2.$ \\
\item If $(\varepsilon,\delta)=(-1,1),$ then
$$\mathcal{D}_{\varepsilon,\delta}(\theta)=\{(l_1,l_2)\in\mathbb{R}_{>0}^2|\sinh l_1\sinh
l_2-\cos \theta\cosh l_1\cosh l_2>1\}.$$\\
\item If $(\varepsilon,\delta)=(-1,0),$ then
$$\mathcal{D}_{\varepsilon,\delta}(\theta)=\{(l_1,l_2)\in\mathbb{R}^2|\theta>e^{-l_1}+e^{-l_2}\}.$$\\
\item If $(\varepsilon,\delta)=(-1,-1),$ then
$$\mathcal{D}_{\varepsilon,\delta}(\theta)=\{(l_1,l_2)\in\mathbb{R}_{>0}^2|\cosh \theta\sinh
l_1\sinh l_2-\cosh l_1\cosh l_2>1\}.$$
\end{enumerate}
\end{lemma}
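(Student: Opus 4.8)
\emph{Proof strategy.} The engine of the argument is the cosine law (Lemma~\ref{thm:cosine-law}). Fix a type $(\varepsilon,\varepsilon,\delta)$, and recall that for such a triangle the two data $l_1,l_2$ are the lengths of the edges issuing from the vertex that carries the angle $\theta=\theta_3$ (of type $\varepsilon_3=\delta$), while the unknown third edge $l_3$ joins the two vertices of type $\varepsilon$ and hence, by definition, satisfies $l_3\in J_{\varepsilon^2}$, that is $l_3\in\mathbb{R}_{>0}$ when $\varepsilon=\pm1$ and $l_3\in\mathbb{R}$ when $\varepsilon=0$. The plan is: (i) use the appropriate one of (\ref{fml:cosine1})--(\ref{fml:cosine3}) with $i=3$ to write $l_3$ --- more precisely the relevant monotone function of $l_3$, namely $\cosh l_3$ when $\varepsilon=\pm1$ --- as an explicit function of $(l_1,l_2,\theta)$; (ii) note that a generalized triangle with the prescribed data exists if and only if this value of $l_3$ lies in $J_{\varepsilon^2}$ \emph{and} the resulting numerical data is geometrically realizable; (iii) turn the condition $l_3\in J_{\varepsilon^2}$ into the inequalities in the statement.

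For part (1), first suppose $\varepsilon=0$. Then $J_{\varepsilon^2}=J_0=\mathbb{R}$, so step (ii) imposes no inequality; and the configuration is realizable by the obvious construction in $\mathbb{H}^2$ --- fix the type-$\delta$ vertex together with the two edge-geodesics through it making generalized angle $\theta\in\mathring{I}_\delta$, and mark off along them two decorated ideal vertices at the prescribed generalized distances $l_1,l_2$; nothing can go wrong because these two vertices are ideal points. Now suppose $\varepsilon=1$. Specializing the cosine law with $\varepsilon_1=\varepsilon_2=1$ expresses $\cosh l_3$ explicitly in $(l_1,l_2,\theta)$, and since $\theta\in\mathring{I}_\delta$ (so $\cos\theta<1$ if $\delta=1$, $\theta>0$ if $\delta=0$, $\cosh\theta>1$ if $\delta=-1$) this expression always exceeds $\cosh(l_1-l_2)\geq1$; hence $l_3\in\mathbb{R}_{>0}$ is automatic, and again the triangle is produced by a direct construction (the two type-$\varepsilon$ vertices are ordinary points, so every $l_1,l_2$ is attained). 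Thus $\mathcal{D}_{\varepsilon,\delta}(\theta)=(J_{\varepsilon\delta})^2$ in all cases of part (1).

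For parts (2)--(4) we have $\varepsilon=-1$, so $\varepsilon_1=\varepsilon_2=-1$ and $\varepsilon_3=\delta$, and $l_3\in J_1=\mathbb{R}_{>0}$, i.e.\ $\cosh l_3>1$. Substituting these values into the cosine law and reading off $\rho_{\pm1},\rho_0,\tau_{\pm1},\tau_0$ from the definitions, I would get $\cosh l_3=\sinh l_1\sinh l_2-\cos\theta\,\cosh l_1\cosh l_2$ when $\delta=1$ (from (\ref{fml:cosine3}) with $i=3$), $\cosh l_3=\cosh\theta\,\sinh l_1\sinh l_2-\cosh l_1\cosh l_2$ when $\delta=-1$ (again from (\ref{fml:cosine3})), and an identity expressing $\theta^2$ through $\cosh l_3$, $l_1$, $l_2$ when $\delta=0$ (from (\ref{fml:cosine2}), for which (\ref{fml:cosine3}) is trivial). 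Imposing $\cosh l_3>1$ then rearranges, after elementary simplification (using $1+\cosh(l_1-l_2)=2\cosh^2\frac{l_1-l_2}{2}$ in the $\delta=0$ case), into exactly the three inequalities of the statement; conversely, whenever the relevant inequality holds the right-hand side exceeds $1$, so by monotonicity of $\cosh$ on $\mathbb{R}_{>0}$ there is a unique $l_3>0$ realizing the identity.

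It then remains to check that the triple $(l_1,l_2,l_3)$, together with the angles $\theta_1,\theta_2$ supplied by the cosine law, is genuinely realized by a type $(-1,-1,\delta)$ generalized triangle --- and this is the step I expect to be the real obstacle, since everything up to here is just algebra. The clean way is to form the Gram matrix $G_l$ of Lemma~\ref{thm:determinant}: under the stated inequality all the relevant $\tau_{\varepsilon_j\varepsilon_k}(l_i)$ and $\rho_{\varepsilon_i}(\theta_i)$ are nonzero, so $\det G_l<0$ and $G_l$ is a nondegenerate symmetric form of Lorentzian signature $(2,1)$; its three coordinate vectors then represent three hyperbolic objects (geodesics, or ordinary/ideal points, according to the signs of the diagonal entries $-\varepsilon_i$) with exactly the prescribed pairwise invariants, and these three objects bound the desired generalized triangle. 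One still has to verify that the generalized angles $\theta_1,\theta_2$ so produced are positive of type $-1$, which follows from Lemma~\ref{thm:inverse matrix} (identifying $G_\theta$, up to conjugation by $M$, with the inverse of $G_l$) together with the sine law (\ref{fml:sine}). Alternatively --- and this may be the most transparent route in the $\varepsilon=-1$ cases --- one can build the figure by hand in the upper half-plane model: place the common perpendicular $L_{12}$ of the two hyperideal vertices, erect the two edge-geodesics perpendicular to it at the correct separation, and check directly that the third side closes the polygon up with the prescribed angle $\theta$ and with positive generalized angles at the two hyperideal vertices.
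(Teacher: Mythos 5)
Your reduction of parts (2)--(4) to the condition $\cosh l_3>1$, and the necessity direction, are fine and agree with the paper; part (1) is also essentially the paper's argument (for $\varepsilon=1$ the cosine-law positivity check is superfluous, since the third side is just the geodesic segment joining the two ordinary points $P_1,P_2$). The genuine gap is exactly at the step you yourself call ``the real obstacle'': for $\varepsilon=-1$ you never actually prove that a triple $(l_1,l_2,\theta)$ satisfying the inequality is realized by a generalized triangle, and neither of your sketches closes this. The Gram-matrix route is circular as written: Lemma \ref{thm:determinant}, Lemma \ref{thm:inverse matrix} and the sine law (\ref{fml:sine}) are statements about a triangle already assumed to exist (they are derived in Appendix B from the cosine laws of such a triangle), so they cannot be quoted to produce the triangle; moreover ``the relevant $\tau$'s and $\rho$'s are nonzero, so $\det G_l<0$'' is a non sequitur --- nonvanishing of entries does not determine the sign of a determinant, and the angle $\theta_1$ occurring in (\ref{fml:det G_l}) is not yet defined at that point. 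Making this route honest would require computing the signature of $G_l$ directly from $(l_1,l_2,l_3)$ and then verifying positivity and correct ordering of the realized configuration (disjointness of the truncating geodesics, that the angle produced at the type-$\delta$ vertex is $\theta$ and not $\pi-\theta$, etc.), which is the entire content of the lemma rather than a footnote. Your synthetic alternative is also mis-stated: the edges $L_1,L_2$ are not perpendicular to the common perpendicular of the two hyperideal vertices (it is the two truncating geodesics that are perpendicular to the third side), and ``check directly that the third side closes the polygon up'' is precisely the assertion to be proved.

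The paper closes this gap by building the figure from the $\theta$-vertex instead of from the third side. For each $\delta$ it constructs the hinge unconditionally: the angle-$\theta$ corner (a point, a decorated ideal point with horocyclic arc, or a segment of length $\theta$ with two perpendicular rays), the two edge geodesics, and the points $P_i$ at generalized distance $l_i$ along them; then it erects $M_i\perp L_i$ at $P_i$. The generalized triangle of type $(-1,-1,\delta)$ with the prescribed data exists if and only if $M_1$ and $M_2$ are at positive distance $l$, because $M_1,M_2$ are forced to be the truncating geodesics of any such triangle; and the hinge form of the cosine law expresses $\cosh l$ as exactly the left-hand side of the stated inequality (for instance $\cosh l=\sinh l_1\sinh l_2-\cos\theta\cosh l_1\cosh l_2$ when $\delta=1$), so $l>0$ is literally the inequality. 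With this construction both directions are immediate, no Gram matrix is needed, and the positivity of the generalized angles at the two hyperideal vertices is automatic, since once $M_1,M_2$ are disjoint those angles are distances. I suggest you either rewrite parts (2)--(4) along these lines or carry out the Minkowski realization in full rather than by appeal to Lemmas \ref{thm:determinant} and \ref{thm:inverse matrix}.
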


\begin{proof} We will construct a generalized hyperbolic triangle
of type $(\varepsilon,\varepsilon,\delta)$ in each case as follows.

If $(\varepsilon,\varepsilon,\delta)=(1,1,1),$ choose a point $O$
in the hyperbolic plane. Draw two geodesics rays $L_1,L_2$
starting from the point $O$ such that the angle between $L_1,L_2$
is $\theta\in(0,\pi).$ Let $P_i$ be the point on $L_i$ such that
the length of the segment $OP_i$ is $l_i>0, i=1,2.$ Then one
obtains the hyperbolic triangle by joining $P_1,P_2$ by a geodesic
segment.

If $(\varepsilon,\varepsilon,\delta)=(1,1,0),$ choose a point $O$ at
the infinite of the hyperbolic plane and draw a horocycle centered
at $O.$ Let $H_1,H_2$ be two points on the horocycle such that the
length of the horocyclic arc $H_1H_2$ is $\frac\theta2>0.$ For
$i=1,2,$ draw a geodesic $L_i$ passing through $O$ and $H_i.$ Let
$P_i$ be the point on $L_i$ such that the length of the segment of
$H_iP_i$ is $|l_i|$ and $H_i$ is between $O,P_i$ on $L_i$ if and
only if $l_i>0$. Then one obtains the generalized hyperbolic
triangle by joining $P_1,P_2$ by a geodesic segment.

If $(\varepsilon,\varepsilon,\delta)=(1,1,-1),$ draw a geodesic
segment $G_1G_2$ of length $\theta>0$ with end points $G_1,G_2.$
For $i=1,2,$ let $L_i$ be a geodesic ray starting from $G_i$
perpendicular to $G_1G_2$ and $L_1,L_2$ are in the same half plane
bounded by the geodesic containing $G_1G_2.$ Let $P_i$ be the
point on $L_i$ such that the length of the segment $G_iP_i$ is
$l_i>0, i=1,2.$ Then one obtains the generalized hyperbolic
triangle by joining $P_1,P_2$ by a geodesic segment.

If $(\varepsilon,\varepsilon,\delta)=(0,0,1),$ choose a point $O$
in the hyperbolic plane $\mathbb{H}^2$. Draw two geodesics rays
$L_1,L_2$ starting form the point $O$ such that the angle between
$L_1,L_2$ is $\theta\in(0,\pi)$ and $L_i$ ends at point
$P_i\in\partial\mathbb{H}^2.$ Join $P_1,P_2$ by a geodesic. Let
$H_i$ be the point on the geodesic containing $L_i$ such the
length of $OH_i$ is $|l_i|$ and $H_i$ is between $O,P_i$ if and
only if $l_i>0$. Draw a horocycle centered at $P_i$ passing
through $H_i$. One obtains the generalized triangle $OP_1P_2$
decorated by two horocycles.

If $(\varepsilon,\varepsilon,\delta)=(0,0,0)$ or $(0,0,-1)$, the
construction is similar to the case of $(0,0,1)$ above.

If $(\varepsilon,\varepsilon,\delta)=(-1,-1,1),$ choose a point
$O$ in the hyperbolic plane. Draw two geodesics rays $L_1,L_2$
starting form the point $O$ such that the angle between $L_1,L_2$
is $\theta\in(0,\pi).$ For $i=1,2,$ let $P_i$ be the point on
$L_i$ such that the length of the segment $OP_i$ is $l_i>0.$ Let
$M_i$ be the geodesic passing through $P_i$ perpendicular to
$L_i.$ There is a generalized hyperbolic triangle with prescribed
lengths $l_1,l_2$ and angle $\theta$ if and only if the distance
between $M_1,M_2$ is positive. The distance $l$ between $M_1,M_2$
can be calculated from the cosine law of generalized triangle of
type $(-1,-1,1)$:
$$\cosh l=\sinh l_1\sinh l_2-\cos \theta\cosh l_1\cosh l_2.$$ This
shows that the condition in part (2) is equivalent to $l>0.$

If $(\varepsilon,\varepsilon,\delta)=(-1,-1,0),$ choose a point $O$
at the infinite of the hyperbolic plane and draw a horocycle
centered at $O$. Let $H_1,H_2$ be the two points on the horocycle
such that the length of the horocyclic arc $H_1H_2$ is
$\frac\theta2>0.$ For $i=1,2,$ draw a geodesic $L_i$ passing through
$O$ and $H_i.$ Let $P_i$ be the point on $L_i$ such that the length
of the segment $H_iP_i$ is $|l_i|$ and $H_i$ is between $O,P_i$ if
and only if $l_i>0.$ Let $M_i$ be the geodesic passing through $P_i$
perpendicular to $L_i.$ There is a generalized hyperbolic triangle
with prescribed lengths $l_1,l_2$ and generalized angle $\theta$ if
and only if the distance $l$ between $M_1,M_2$ is positive. The
distance $l$ between $M_1,M_2$ can be calculated from the cosine law
of generalized triangle of type $(-1,-1,0)$:
$$\cosh l=\frac12\theta^2e^{l_1+l_2}-\cosh(l_1-l_2).$$
Now, one sees $\frac12\theta^2e^{l_1+l_2}-\cosh(l_1-l_2)>1$ if and
only if $\theta>e^{-l_1}+e^{-l_2}.$ This shows part (3) holds.

If $(\varepsilon,\varepsilon,\delta)=(-1,-1,-1),$ draw a geodesic
segment $G_1G_2$ of length $\theta>0$ with end points $G_1,G_2.$ For
$i=1,2,$ let $L_i$ be a geodesic half line starting from $G_i$
perpendicular to $G_1G_2$ and $L_1,L_2$ are in the same half plane
bounded by the geodesic containing $G_1G_2.$ Let $P_i$ be the point
on $L_i$ such that the length of the segment $G_iP_i$ is $l_i>0,
i=1,2.$ Let $M_i$ be the geodesic passing through $P_i$
perpendicular to $L_i.$ There is a right-angled hexagon with
prescribed lengths $l_1,l_2,\theta$ if and only if the distance $l$
between $M_1,M_2$ is positive. The distance $l$ between $M_1,M_2$
can be calculated from the cosine law of a right-angled hexagon:
$$\cosh l=\cosh \theta\sinh l_1\sinh
l_2-\cosh l_1\cosh l_2.$$ This shows that the condition in part
(4) is equivalent to $l>0.$
\end{proof}

\section{A proof of Theorem \ref{thm:generalized-circle-packing}}

We give a proof of the generalized circle packing theorem (Theorem
\ref{thm:generalized-circle-packing}) in this section.

Recall that $(\Sigma,T)$ is a closed triangulated surface with
$V,E,F$ the sets all vertices, edges and triangles in $T.$ Given a
type $(\varepsilon,\varepsilon,\delta)\in\{-1,0,1\}^3$ and
$\Phi:E\to I_\delta,$ for each
$r\in\mathcal{N}_{\varepsilon,\delta}(\Phi),$ a generalized
$(\varepsilon,\varepsilon,\delta)$ circle packing on $(\Sigma,T)$
is based on the following local construction.

As in Figure \ref{fig:packing-data}, consider a topological
triangle with vertices $v_i,v_j,v_k$. One can construct a
generalized hyperbolic triangle $\triangle v_iv_jq_k$ of type
$(\varepsilon,\varepsilon,\delta)$ such that the edges
$v_iq_k,v_jq_k$ have lengths $r(v_i):=r_i,r(v_j):=r_j$
respectively and the generalized angle at $q_k$ is
$\Phi(v_iv_j)=:\phi_k$. Let $l(v_iv_j)=:l_k$ be the length of edge
$v_iv_j$ in the generalized hyperbolic triangle $\triangle
v_iv_jq_k$ which is a function of $r_i,r_j.$ Similarly, one
obtains the edge lengthes $l(v_jv_k)=:l_i,l(v_kv_i):=l_j.$ Let
$\theta_i$ be the generalized angle of $\triangle v_iv_jv_k$ at
the vertex $v_i.$ With fixed $(\phi_i,\phi_j,\phi_k),$ we consider
$\theta_i$ as a function of $(r_i,r_j,r_k).$

\begin{center}
\includegraphics[scale=.6]{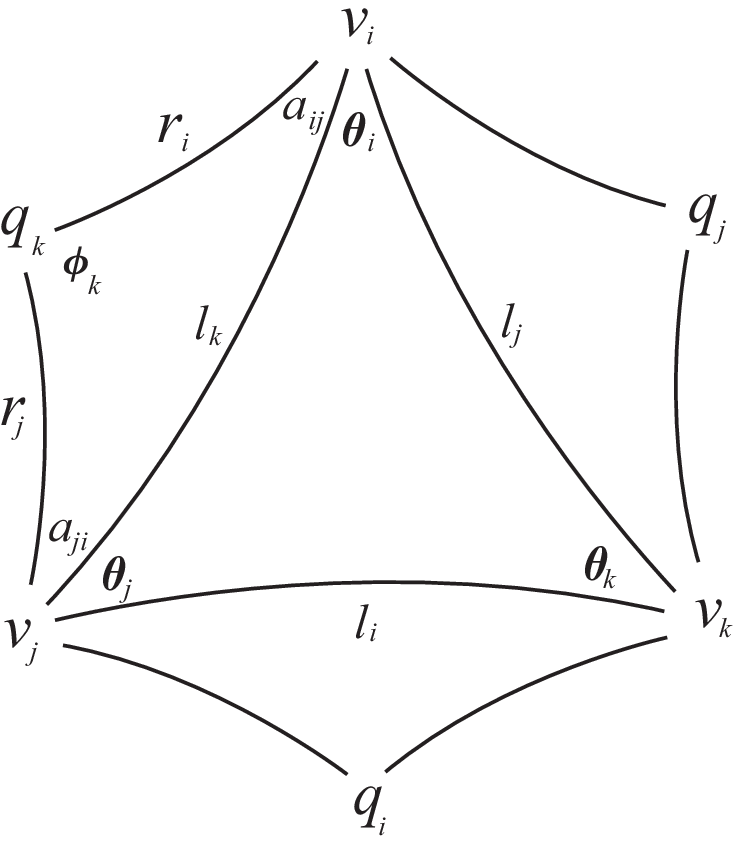}
\end{center}
\caption{\label{fig:packing-data}}

Let's recall the definition of
$\mathcal{M}_{\varepsilon,\delta}(\Phi(v_iv_j),\Phi(v_jv_k),\Phi(v_kv_i))$
in \S 1.4 and definition of
$\mathcal{D}_{\varepsilon,\delta}(\theta)$ in \S 3.

\begin{lemma}\label{thm:M} When $\varepsilon=0$ or $-1,$ we have
\begin{multline*}
\mathcal{M}_{\varepsilon,\delta}(\phi_1,\phi_2,\phi_3)
=\{(r_1,r_2,r_3)\in(J_{\varepsilon\delta})^3|
(r_i,r_j)\in\mathcal{D}_{\varepsilon,\delta}(\phi_k),
\{i,j,k\}=\{1,2,3\}\}.
\end{multline*}
\end{lemma}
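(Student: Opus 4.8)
The plan is to show that each of the two sets appearing in the claimed identity coincides with the set of triples $(r_1,r_2,r_3)\in(J_{\varepsilon\delta})^3$ for which all three ``corner'' triangles $\triangle v_iv_jq_k$, $\{i,j,k\}=\{1,2,3\}$, exist. On one side, by the very definition of $\mathcal{D}_{\varepsilon,\delta}$ in \S 3, the condition $(r_i,r_j)\in\mathcal{D}_{\varepsilon,\delta}(\phi_k)$ for all $\{i,j,k\}=\{1,2,3\}$ \emph{is} the assertion that each $\triangle v_iv_jq_k$ (of type $(\varepsilon,\varepsilon,\delta)$, with the two edges $r_i,r_j$ and included angle $\phi_k$) exists, so the right-hand set is precisely this common set. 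On the other side, by the definition of $\mathcal{M}_{\varepsilon,\delta}$ in \S 1.4, a triple $(r_1,r_2,r_3)$ lies in $\mathcal{M}_{\varepsilon,\delta}(\phi_1,\phi_2,\phi_3)$ iff (i) the edge lengths $l_k=l(v_iv_j)$ are defined (i.e.\ the corner triangles exist) and (ii) there is a type $(\varepsilon,\varepsilon,\varepsilon)$ triangle with edge lengths $l_1,l_2,l_3$. Thus the lemma reduces to the assertion: when $\varepsilon\in\{0,-1\}$, condition (ii) follows automatically from condition (i).

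First I would check that when a corner triangle exists, the length $l_k$ it produces lies in the range $J_\varepsilon$ permitted for a side of a $(\varepsilon,\varepsilon,\varepsilon)$ triangle. For $\varepsilon=0$ the edge $v_iv_j$ joins two decorated ideal vertices, so its generalized length is an arbitrary real number, $l_k\in\mathbb{R}=J_0$; moreover $\mathcal{D}_{0,\delta}(\theta)=(J_0)^2$ by Lemma \ref{thm:realize}(1), so condition (i) is vacuous here. For $\varepsilon=-1$, the explicit constructions in the proof of Lemma \ref{thm:realize} identify $l_k=l(v_iv_j)$ with the distance between the two truncating geodesics at the hyper-ideal vertices $v_i,v_j$ of $\triangle v_iv_jq_k$, and exhibit $\cosh l_k$ as exactly the cosine-law expression occurring in the description of $\mathcal{D}_{-1,\delta}(\phi_k)$; hence $(r_i,r_j)\in\mathcal{D}_{-1,\delta}(\phi_k)$ is equivalent to $l_k>0$, i.e.\ $l_k\in\mathbb{R}_{>0}=J_{-1}$.

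The substantive step is the realizability statement itself: for $\varepsilon\in\{0,-1\}$, \emph{every} triple $(l_1,l_2,l_3)\in(J_\varepsilon)^3$ is realized as the triple of edge lengths of some type $(\varepsilon,\varepsilon,\varepsilon)$ triangle. For $\varepsilon=0$ this is Penner's picture of decorated ideal triangles: given $(l_1,l_2,l_3)\in\mathbb{R}^3$, put $\theta_i=2e^{(l_i-l_j-l_k)/2}>0$; these are the generalized angles of a decorated ideal triangle, unique up to isometry, and one checks that they are also consistent with $e^{l_i}/2=2/(\theta_j\theta_k)$. For $\varepsilon=-1$ this is the classical fact that any three positive numbers are the three alternating side lengths of a right-angled hexagon; equivalently it can be read off the cosine law (\ref{fml:cosine2}) of Lemma \ref{thm:cosine-law}, noting that for $\varepsilon_1=\varepsilon_2=\varepsilon_3=-1$ and $l_1,l_2,l_3>0$ its right-hand side equals $(\cosh l_i+\cosh(l_j-l_k))/(\sinh l_j\sinh l_k)>0$, so one can solve uniquely for each $\theta_i>0$ and the hexagon exists. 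I expect the only genuine work to be this positivity/solvability check for the hexagon; the $\varepsilon=0$ case is bookkeeping.

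Putting the pieces together: for $\varepsilon\in\{0,-1\}$, a triple $(r_1,r_2,r_3)\in(J_{\varepsilon\delta})^3$ lies in $\mathcal{M}_{\varepsilon,\delta}(\phi_1,\phi_2,\phi_3)$ iff all three corner triangles $\triangle v_iv_jq_k$ exist (their existence already forces each $l_k$ into $J_\varepsilon$, and then condition (ii) is automatic), iff $(r_i,r_j)\in\mathcal{D}_{\varepsilon,\delta}(\phi_k)$ for every $\{i,j,k\}=\{1,2,3\}$, which is exactly the asserted description of $\mathcal{M}_{\varepsilon,\delta}(\phi_1,\phi_2,\phi_3)$. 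The hypothesis $\varepsilon\neq1$ enters only in the realizability step: for $\varepsilon=1$ not every triple of positive numbers bounds a hyperbolic triangle, so condition (ii) would impose a genuine extra constraint on $(r_1,r_2,r_3)$ and the simple description would break down.
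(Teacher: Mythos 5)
Your proposal is correct and follows essentially the same route as the paper: use Lemma \ref{thm:realize} to identify the condition $(r_i,r_j)\in\mathcal{D}_{\varepsilon,\delta}(\phi_k)$ with the existence of the corner triangle and with $l_k\in J_\varepsilon$, and then observe that for $\varepsilon=0$ every real triple is realized by a decorated ideal triangle while for $\varepsilon=-1$ every positive triple is realized by a right-angled hexagon, so the $(\varepsilon,\varepsilon,\varepsilon)$ triangle exists automatically. The only point you omit is the boundary case $\delta=1$, $\phi_k=\pi$ (allowed since $\Phi$ maps into $I_\delta=(0,\pi]$ but not covered by Lemma \ref{thm:realize}), which the paper dispatches by noting that there $l_k=r_i+r_j$.
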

\begin{proof} Given $\phi_k\in\mathring{I}_\delta,$ by Lemma \ref{thm:realize}, for $r_i,r_j\in
\mathcal{D}_{\varepsilon,\delta}(\phi_k),$ there is generalized
hyperbolic triangle $\triangle v_iv_jq_k$ of type
$(\varepsilon,\varepsilon,\delta)$ with edge lengths $r_i,r_j$ and
angle $\phi_k$ of type $\delta$ between the two edges. We obtain
the edge length $l_k=l(v_iv_j).$ There is a case which is not
contained in Lemma \ref{thm:realize}: $\delta=1, \phi_k=\pi.$ It
is easier since we have $l_k=r_i+r_j.$

For $\varepsilon=0,$ we get $l_i,l_j,l_k\in\mathbb{R}.$ Thus there
exists a decorated ideal triangle with three edges length
$l_i,l_j,l_k.$ For $\varepsilon=-1,$ the inequality defining
$\mathcal{D}_{-1,\delta}(\phi_k)$ guarantees that $l_i,l_j,l_k>0.$
Thus there exists a right-angled hexagon with three edges length
$l_i,l_j,l_k.$
\end{proof}

\subsection{A proof of Theorem \ref{thm:generalized-circle-packing} for $\varepsilon=0$}

We assume that indices $i,j,k$ are distinct in this section. First
by Lemma \ref{thm:realize} and Lemma \ref{thm:M}, we have
$\mathcal{M}_{0,\delta}(\phi_i,\phi_j,\phi_k)=\mathbb{R}^3.$
Therefore $\mathcal{N}_{0,\delta}(\Phi)=\mathbb{R}^V.$

The case $\varepsilon=0$ is very simple. Indeed, for a fixed
$\Phi:E\to I_\delta,$ there exists a map $C:V\to\mathbb{R}_{>0}$
so that for all $r\in\mathcal{N}_{0,\delta}(\Phi)=\mathbb{R}^V$,
\begin{align}\label{fml:d}
\widetilde{K}(r)(v)=C(v)e^{-r(v)},
\end{align}
i.e., the discrete curvature $\widetilde{K}(r)$ is uniformly
proportional to $e^{-r(v)}.$ Thus, one sees easily that the
generalized circle packing metric $r\in\mathbb{R}^V$ is determined
by its generalized discrete curvature $\widetilde{K}:V\to
\mathbb{R}$ and $\{\widetilde{K}(v)|v\in V\}=\mathbb{R}^V_{>0}$.

Indeed, in Figure \ref{fig:packing-data}, the generalized triangle
$\triangle v_iv_jq_k$ has type $(0,0,\delta)$, edge lengths
$r_i,r_j,l_k$ and inner angle $\phi_k$ opposite to $l_k$. By the
cosine law (Lemma \ref{thm:cosine-law} (\ref{fml:cosine2})), we
have
$$2\rho_\delta^2(\frac{\phi_k}2)=\frac{\tau_0'(l_k)}
{\tau_0(r_i)\tau_0(r_j)}=2e^{l_k-r_i-r_j}.$$ Thus
\begin{align}\label{fml:a}
 e^{l_k}=\rho_\delta^2(\frac{\phi_k}2)e^{r_i+r_j}.
\end{align}
By the cosine law (\ref{fml:cosine2}) for the decorated ideal
triangle $\triangle v_iv_jv_k$ (or type (0,0,0) generalized
triangle) we have
\begin{align}\label{fml:b}
\frac{\theta_k^2}4=e^{l_k-l_i-l_j}.
\end{align}
Combining (\ref{fml:a}) and (\ref{fml:b}) we obtain
$$\frac{\theta_k^2}4=e^{l_k-l_i-l_j}=\frac{\rho_\delta^2(\frac{\phi_k}2)e^{r_i+r_j}}
{\rho_\delta^2(\frac{\phi_i}2)e^{r_j+r_k}\rho_\delta^2(\frac{\phi_j}2)e^{r_k+r_i}}
=\frac{\rho_\delta^2(\frac{\phi_k}2)}{\rho_\delta^2(\frac{\phi_i}2)\rho_\delta^2(\frac{\phi_j}2)}e^{-2r_k}.$$
Or
\begin{align}\label{fml:c}
\theta_k=\frac{2\rho_\delta(\frac{\phi_k}2)}{\rho_\delta(\frac{\phi_i}2)\rho_\delta(\frac{\phi_j}2)}e^{-r_k}.
\end{align}
Summing up (\ref{fml:c}) for all triangle having $v_k$ as a vertex,
we obtain (\ref{fml:d}) where $C(v)$ depends explicitly on $\delta$.

\subsection{Rigidity of circle packing for $\varepsilon=-1$ or $(\varepsilon,\delta)=(1,1)$}

In this subsection we prove Theorem
\ref{thm:generalized-circle-packing} for the case $\varepsilon=-1$
and  give a new proof of Thurston's rigidity Theorem
\ref{thm:circle-packing}.

In the variational framework, the natural parameter is
$u=(u_1,u_2,u_3)$ where
\begin{align}\label{fml:e}
u_i=-\int_{r_i}^{\infty}\frac1{\tau_{\varepsilon\delta}(t)}dt
\end{align} for $\varepsilon=\pm1$.  Note that by the definition of
$\tau_s$, $\frac{\partial u_i}{\partial r_i} <0$.

\begin{lemma}\label{thm:moduli} Fix $\phi=(\phi_1,\phi_2,\phi_3)\in I_\delta^3,$ the set
$u(\mathcal{M}_{\varepsilon,\delta}(\phi_1,\phi_2,\phi_3))$ is an
open convex polyhedron in $\mathbb{R}^3$ in the following cases:
\begin{enumerate}
\item[(i)] $\varepsilon=-1$ or \\
\item[(ii)] $(\varepsilon,\delta)=(1,1)$ and
$\phi_i\in[\frac\pi2,\pi], i=1,2,3.$
\end{enumerate}
\end{lemma}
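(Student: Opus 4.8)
The plan is to apply the substitution $r_i\mapsto u_i$ of (\ref{fml:e}) and check that, in the new variables, the inequalities defining $\mathcal{M}_{\varepsilon,\delta}(\phi_1,\phi_2,\phi_3)$ become linear. First note that for each value of $\varepsilon\delta$ occurring here $\tau_{\varepsilon\delta}$ is positive on the interior of $J_{\varepsilon\delta}$, so $r_i\mapsto u_i$ is a smooth strictly monotone diffeomorphism of $J_{\varepsilon\delta}$ onto an open interval; carrying out the elementary integrals, $u_i=\ln\tanh(r_i/2)$ when $\varepsilon\delta=1$ (image $(-\infty,0)$), $u_i=-2e^{-r_i}$ when $\varepsilon\delta=0$ (image $(-\infty,0)$), and $u_i=\arctan(\sinh r_i)-\pi/2$ when $\varepsilon\delta=-1$ (image $(-\pi/2,0)$). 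Since a diffeomorphic image of an open convex polyhedron need not be convex, the content of the lemma is precisely that in these coordinates the image is again polyhedral; I would establish this by treating $\varepsilon=-1$ and $(\varepsilon,\delta)=(1,1)$ separately.

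For $\varepsilon=-1$, Lemma \ref{thm:M} identifies $\mathcal{M}_{-1,\delta}(\phi_1,\phi_2,\phi_3)$ with $\{(r_1,r_2,r_3)\in(J_{\varepsilon\delta})^3 : (r_i,r_j)\in\mathcal{D}_{-1,\delta}(\phi_k)\text{ for }\{i,j,k\}=\{1,2,3\}\}$, and Lemma \ref{thm:realize}(2)--(4) gives each $\mathcal{D}_{-1,\delta}(\phi_k)$ by one explicit inequality in $(r_i,r_j)$. Substituting $u_i$ into these and using the half-angle identities $\cosh r=(1+e^{2u})/(1-e^{2u})$, $\sinh r=2e^u/(1-e^{2u})$ when $\delta=-1$ and $\sinh r=-\cot u$, $\cosh r=-1/\sin u$ when $\delta=1$, a short computation --- clear the positive denominators $1-e^{2u_i}$ resp.\ $\sin u_i\sin u_j$ and regroup by the addition formulas for $\cosh$ and $\cos$ --- turns the inequality defining $\mathcal{D}_{-1,\delta}(\phi_k)$ into
\[
\cosh(u_i+u_j)<\cosh\phi_k\quad(\delta=-1),\qquad \cos(u_i+u_j)>\cos\phi_k\quad(\delta=1).
\]
Because $u_i+u_j<0<\phi_k\le\pi$, each of these is equivalent to the single linear inequality $u_i+u_j>-\phi_k$, while for $\delta=0$ the inequality $\phi_k>e^{-r_i}+e^{-r_j}$ becomes $u_i+u_j>-2\phi_k$ directly. (When $\delta=1$ and $\phi_k=\pi$ the triangle $\triangle v_iv_jq_k$ exists for all $r_i,r_j>0$ with $l_k=r_i+r_j$, as in the proof of Lemma \ref{thm:M}, and the condition $u_i+u_j>-\pi$ is vacuous on the box, so the formula remains correct.) Intersecting these three half-spaces with the coordinate box --- itself cut out by $u_i<0$ and, when $\delta=1$, also $u_i>-\pi/2$ --- exhibits $u(\mathcal{M}_{-1,\delta}(\phi_1,\phi_2,\phi_3))$ as a finite intersection of open half-spaces, i.e.\ an open convex polyhedron.

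For $(\varepsilon,\delta)=(1,1)$ with all $\phi_i\in[\pi/2,\pi]$ no such computation is needed: by Lemma \ref{thm:realize}(1) each $\triangle v_iv_jq_k$ exists for every $(r_i,r_j)\in\mathbb{R}_{>0}^2$, and Thurston's observation recalled before Theorem \ref{thm:circle-packing} says that when the weights lie in $[\pi/2,\pi]$ the resulting edge lengths $l_1,l_2,l_3$ always satisfy the strict triangle inequality, hence bound a $(1,1,1)$-type triangle. Thus $\mathcal{M}_{1,1}(\phi_1,\phi_2,\phi_3)=\mathbb{R}_{>0}^3$, and since $\varepsilon\delta=1$ its image under $r_i\mapsto\ln\tanh(r_i/2)$ is $(-\infty,0)^3$, an open convex polyhedron. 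This is exactly where the hypothesis $\phi_i\in[\pi/2,\pi]$ is used, which is also why the argument does not reach the types $(1,1,-1)$ and $(1,1,0)$.

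The only genuine work is the case-by-case linearization in the $\varepsilon=-1$ step; I expect the main (if shallow) obstacle there to be the sign bookkeeping --- that $1-e^{2u_i}>0$ and $\sin u_i<0$ on the relevant intervals, and that the two-sided bound $|u_i+u_j|<\phi_k$ collapses to the one-sided $u_i+u_j>-\phi_k$ because $u_i,u_j<0$. Everything else reduces to quoting Lemma \ref{thm:M}, Lemma \ref{thm:realize}, and Thurston's triangle-inequality observation.
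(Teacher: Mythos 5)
Your proposal is correct and follows essentially the same route as the paper: quote Lemma \ref{thm:M} and Lemma \ref{thm:realize}, pass to the variables $u_i$ of (\ref{fml:e}), check that the defining inequality for each $\delta$ linearizes to $u_i+u_j>-\phi_k$ (resp.\ a linear condition for $\delta=0$), and for $(\varepsilon,\delta)=(1,1)$ invoke Thurston's observation that $\mathcal{M}_{1,1}(\phi_1,\phi_2,\phi_3)=\mathbb{R}^3_{>0}$, whose image is $\mathbb{R}^3_{<0}$. The only deviations are cosmetic and in your favor: your evaluation $u_i=-2e^{-r_i}$ in the $\delta=0$ case (hence $u_i+u_j>-2\phi_k$) corrects a typo in the paper, and you are more explicit about the box constraints $u_i<0$ (and $u_i>-\pi/2$ when $\delta=1$) and the endpoint $\phi_k=\pi$.
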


\begin{proof}
(i) When $\varepsilon=-1,$ we have figured out the set
$\mathcal{M}_{\varepsilon,\delta}(\phi_1,\phi_2,\phi_3)$ in Lemma
\ref{thm:M}.

If the type of $\phi_k$ is $\delta=1,$ by Lemma \ref{thm:realize},
we need $l_k >0$ which is the same as
\begin{align}\label{fml:moduli 1}
\cos \phi_k=\frac{-\cosh l_k +\sinh r_i\sinh r_j}{\cosh r_i\cosh
r_j}<\frac{-1 +\sinh r_i\sinh r_j}{\cosh r_i\cosh r_j}.
\end{align} In
this case $u_i=-\int_{r_i}^\infty\frac1{\cosh t}dt=2\arctan
e^{r_i}-\pi\in(-\frac\pi2,0).$ Then
\begin{align}\label{fml:moduli 2}
\sinh r_i=\frac1{\tan u_i},\  \cosh r_i=\frac1{\sin u_i}.
\end{align}
By substituting (\ref{fml:moduli 2}) into (\ref{fml:moduli 1}), we
obtain $\cos \phi_k<\cos(u_i+u_j).$ Since $\phi_k\in(0,\pi],$
$-u_i-u_j\in(0,\pi),$ we see that $l_k >0$ if and only if
$u_i+u_j>-\phi_k.$ Then
$u(\mathcal{M}_{\varepsilon,\delta}(\phi_1,\phi_2,\phi_3))
=\{(u_1,u_2,u_3)\in\mathbb{R}^3_{<0}|u_i+u_j>-\phi_k,\{i,j,k\}=\{1,2,3\}\}.$
The last description shows that
$u(\mathcal{M}_{\varepsilon,\delta}(\phi_1,\phi_2,\phi_3))$ is a
convex polyhedron.

If the type of $\phi_k$ is $\delta=0,$ we need $l_k >0$ which is
equivalent to
$$\frac{\phi_k^2}{2}=\frac{\cosh l_k +\cosh
(r_i-r_j)}{e^{r_i+r_j}}>\frac{1 +\cosh
(r_i-r_j)}{e^{r_i+r_j}}=\frac12(e^{-r_i}+e^{-r_j})^2.$$ Since in
this case $u_i=-\int_{r_i}^\infty\frac1{e^t}dt=-e^{r_i}<0,$, we
see that $l_k >0$ is the same as  have $u_i+u_j>-\phi_k.$ Thus
$u(\mathcal{M}_{\varepsilon,\delta}(\phi_1,\phi_2,\phi_3))=\{(u_1,u_2,u_3)\in\mathbb{R}^3_{<0}|
l_i >0$ for all $i$\} =$\{(u_1,u_2,u_3)\in\mathbb{R}^3_{<0} |
u_i+u_j>-\phi_k, \{i,j,k\}=\{1,2,3\}\}$. The last description
shows that
$u(\mathcal{M}_{\varepsilon,\delta}(\phi_1,\phi_2,\phi_3))$ is a
convex polyhedron.

If the type of $\phi_k$ is $\delta=-1,$ we need $l_k >0$ which is
the same as
\begin{align*}
\cosh \phi_k&=\frac{\cosh l_k +\cosh r_i\cosh r_j}{\sinh r_i\sinh
r_j}\\
&>\frac{1 +\cosh r_i\cosh r_j}{\sinh r_i\sinh r_j}\\
&=\frac12(\tanh\frac{r_i}2\tanh\frac{r_j}2+\frac1{\tanh\frac{r_i}2\tanh\frac{r_j}2}).
\end{align*}
Hence $l_k >0$ if and only if
$$e^{-\phi_k}<\tanh\frac{r_i}2\tanh\frac{r_j}2.$$
It follows that $l_k >0$ is the same as,
$$\ln\tanh\frac{r_i}2+\ln\tanh\frac{r_j}2>-\phi_k.$$
Since in this case $u_i=-\int_{r_i}^\infty\frac1{\sinh
t}dt=\ln\tanh\frac{r_i}2<0,$ we see that $l_k >0$ if and only if
$u_i+u_j>-\phi_k.$  It follows that
$u(\mathcal{M}_{\varepsilon,\delta}(\phi_1,\phi_2,\phi_3))=\{(u_1,u_2,u_3)\in\mathbb{R}^3_{<0}|u_i+u_j>-\phi_k,\{i,j,k\}=\{1,2,3\}\}.$
The last description shows that
$u(\mathcal{M}_{\varepsilon,\delta}(\phi_1,\phi_2,\phi_3))$ is a
convex polyhedron.

(ii) When $\varepsilon=1,\delta=1$ and $\phi_k\in[\frac\pi2,\pi]$,
Thurston observed that
$\mathcal{M}_{1,1}(\phi_1,\phi_2,\phi_3)=\mathbb{R}_{>0}^3.$
Indeed, due to $\phi_i\in[\frac\pi2,\pi],$ we see $l_i>r_j$.
Similarly, $l_j>r_i.$ Hence $l_i+l_j>r_j+r_i>l_k.$ This shows that
$l_1, l_2, l_3$ are the lengths of a hyperbolic triangle. In this
case $u_i=-\int_{r_i}^\infty\frac1{\sinh
t}dt=\ln\tanh\frac{r_i}2<0,$ Thus
$u(\mathcal{M}_{1,1}(\phi_1,\phi_2,\phi_3))=\mathbb{R}_{<0}^3.$
\end{proof}

\begin{lemma}\label{thm:symmetry} For $\varepsilon=\pm 1$ and fixed
$(\phi_1,\phi_2,\phi_3)\in I_\delta^3,$ the Jacobi matrix $A$ of
the function
$(\theta_1,\theta_2,\theta_3)=(\theta_1(u),\theta_2(u),\theta_3(u)):
u(\mathcal{M}_{\varepsilon,\delta}(\phi_1,\phi_2,\phi_3))\to\mathbb{R}^3$
is symmetric.
\end{lemma}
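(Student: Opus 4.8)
The plan is to compute the Jacobian $A=(\partial\theta_a/\partial u_b)$ explicitly via the chain rule and then check its symmetry entrywise. Fix a triangle $\triangle v_1v_2v_3$ of $T$, let $\triangle v_iv_jq_k$ be the petal triangle of type $(\varepsilon,\varepsilon,\delta)$ producing the edge length $l_k=l(v_iv_j)$ (with $\phi_k$ fixed), and let $\triangle v_1v_2v_3$ be the resulting type $(\varepsilon,\varepsilon,\varepsilon)$ triangle with generalized angle $\theta_a$ at $v_a$. The map $u\mapsto\theta$ factors as $u\mapsto r\mapsto l\mapsto\theta$, so $A=\bigl(\tfrac{\partial\theta_a}{\partial l_c}\bigr)\bigl(\tfrac{\partial l_c}{\partial r_b}\bigr)\bigl(\tfrac{\partial r_b}{\partial u_b}\bigr)$, where the last factor is diagonal with $\partial r_b/\partial u_b=\tau_{\varepsilon\delta}(r_b)$ by (\ref{fml:e}). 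Since $l_k$ depends only on $r_i,r_j$, the middle factor has zero diagonal; applying the derivative cosine law (\ref{fml:derivative2}) to the petal $\triangle v_iv_jq_k$ and setting $d\phi_k=0$ gives $\partial l_k/\partial r_i=\rho'_\varepsilon(\gamma^i_k)$, where $\gamma^i_k$ is the generalized angle of $\triangle v_iv_jq_k$ at $v_i$ (a function of $r_i,r_j,\phi_k$ by Lemma \ref{thm:cosine-law}). Rewriting $\rho'_\varepsilon(\gamma^i_k)$ by the cosine law (\ref{fml:cosine3}) for this petal and using $\varepsilon^2=1$ (so $\tau_{\varepsilon\varepsilon}=\tau_1=\sinh$, $\tau'_{\varepsilon\varepsilon}=\cosh$) yields
\[
\frac{\partial l_k}{\partial u_i}=\rho'_\varepsilon(\gamma^i_k)\,\tau_{\varepsilon\delta}(r_i)=\frac{w_i\cosh l_k-\varepsilon\,w_j}{\sinh l_k},\qquad w_m:=\tau'_{\varepsilon\delta}(r_m).
\]

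For the remaining factor, the derivative cosine law (\ref{fml:derivative2}) applied to the big triangle gives $\bigl(\partial\theta_a/\partial l_c\bigr)=MG_\theta$; reading off the entries, $\partial\theta_a/\partial l_a=\rho_\varepsilon(\theta_a)/\sqrt{-\det G_\theta}$ and $\partial\theta_a/\partial l_c=-\rho_\varepsilon(\theta_a)\rho'_\varepsilon(\theta_b)/\sqrt{-\det G_\theta}$ for $\{a,b,c\}=\{1,2,3\}$. Multiplying the three factors and using that $\partial l_c/\partial u_b$ vanishes when $b=c$, one obtains for $a\ne b$ and $\{a,b,d\}=\{1,2,3\}$
\[
A_{ab}=\frac{\rho_\varepsilon(\theta_a)}{\sqrt{-\det G_\theta}}\left(\frac{w_b\cosh l_a-\varepsilon w_d}{\sinh l_a}-\rho'_\varepsilon(\theta_b)\,\frac{w_b\cosh l_d-\varepsilon w_a}{\sinh l_d}\right),
\]
and the analogous expression for $A_{ba}$ by interchanging $a$ and $b$ (leaving $d$ fixed).

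It then remains to verify $A_{ab}=A_{ba}$. After clearing the common factor $\sqrt{-\det G_\theta}^{-1}$, the difference $A_{ab}-A_{ba}$ is a linear expression in $w_a,w_b,w_d$ whose coefficients involve only the big triangle's data $\theta_1,\theta_2,\theta_3,l_1,l_2,l_3$, so it suffices to show each coefficient vanishes identically. The coefficient of $w_d$ is a nonzero multiple of $\rho_\varepsilon(\theta_a)/\sinh l_a-\rho_\varepsilon(\theta_b)/\sinh l_b$, which is $0$ by the sine law (\ref{fml:sine}) for the big triangle. For the coefficient of $w_a$, the sine law lets one replace $\rho_\varepsilon(\theta_b)\sinh l_d/\sinh l_b$ by $\rho_\varepsilon(\theta_d)$, reducing its vanishing to
\[
\rho_\varepsilon(\theta_d)\cosh l_b=\varepsilon\,\rho_\varepsilon(\theta_a)\rho'_\varepsilon(\theta_b)+\rho_\varepsilon(\theta_b)\rho'_\varepsilon(\theta_a)\cosh l_d;
\]
substituting the cosine law (\ref{fml:cosine1}) for $\cosh l_b$ and $\cosh l_d$ of the big triangle, this collapses to $\varepsilon\,\rho_\varepsilon(\theta_a)^2+\rho'_\varepsilon(\theta_a)^2=1$, which holds since $\rho_\varepsilon(\theta)=\frac1{\sqrt\varepsilon}\sin(\sqrt\varepsilon\,\theta)$ and $\rho'_\varepsilon(\theta)=\cos(\sqrt\varepsilon\,\theta)$. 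The coefficient of $w_b$ is the image of that of $w_a$ under $a\leftrightarrow b$, hence also vanishes, and therefore $A=A^{T}$.

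The main obstacle is the bookkeeping in the last two steps: one must keep the angles and lengths of the petals separate from those of the big triangle, track the index permutations, and recognize precisely which cosine/sine laws of Lemma \ref{thm:cosine-law} (together with $\varepsilon\rho_\varepsilon^2+\rho'^2_\varepsilon=1$) make the three coefficients collapse. Two minor points also need attention: the hypothesis $\varepsilon=\pm1$ is used only through $\varepsilon^2=1$ and the change of variable (\ref{fml:e}), while the case distinction (i)/(ii) in the statement affects only the shape of the domain $u(\mathcal{M}_{\varepsilon,\delta}(\phi_1,\phi_2,\phi_3))$ (cf. Lemma \ref{thm:moduli}), not this local identity; and the degenerate configuration $\phi_k=\pi$ (possible when $\delta=1$), where the petal collapses and $l_k=r_i+r_j$, $\gamma^i_k=\gamma^j_k=0$, is covered by the same formulas by continuity.
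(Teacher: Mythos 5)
Your proposal is correct and follows essentially the same route as the paper: the chain rule $u\to r\to l\to\theta$, with the derivative cosine law applied to each petal triangle (setting $d\phi_k=0$) and to the big triangle, and a final algebraic verification resting on the cosine/sine laws of Lemma \ref{thm:cosine-law} together with $\varepsilon^2=1$. The only difference is in the endgame bookkeeping: the paper substitutes the cosine law (\ref{fml:cosine3}) so as to rewrite the matrix entry $N_{ij}$ as an expression manifestly symmetric in $i,j$ (entirely in the length variables), whereas you expand $A_{ab}-A_{ba}$ linearly in $w_m=\tau'_{\varepsilon\delta}(r_m)$ and annihilate each coefficient using the big triangle's sine and cosine laws and the identity $\varepsilon\rho_\varepsilon^2+\rho_\varepsilon'^2=1$.
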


\begin{proof} Consider the $(\varepsilon,\varepsilon,\delta)-$triangle
$\triangle v_iv_jq_k$ in Figure \ref{fig:packing-data} with edge
lengths $r_i,r_j,l_k$ and opposite generalized angles
$a_{ji},a_{ij},\phi_k$. By applying the derivative cosine law
(\ref{fml:derivative2}) to $\triangle v_iv_jq_k$, the third row of
the matrix in (\ref{fml:derivative2}) is
$$d\phi_k=\frac{-\rho_\delta(\phi_k)}{\sqrt{-\det G_\theta}}
(\rho'_\varepsilon(a_{ji})dr_i+\rho'_\varepsilon(a_{ij})dr_j-dl_k).$$
Since $\phi_k$ is fixed, then $d\phi_k=0.$ We have
$dl_k=\rho'_\varepsilon(a_{ji})dr_i+\rho'_\varepsilon(a_{ij})dr_j.$
For $\varepsilon=\pm 1,$ we have
$u_i=-\int_{r_i}^\infty\frac1{\tau_{\varepsilon\delta}(t)}dt.$
Then $dr_i=\tau_{\varepsilon\delta}(r_i)du_i.$ Thus
\begin{align}
\left(
\begin{array}{ccc}
dl_1 \\
dl_2 \\
dl_3
\end{array}\right)&=\left(
\begin{array}{ccc}
0&\rho'_{\varepsilon}(a_{23})&\rho'_{\varepsilon}(a_{32}) \\
\rho'_{\varepsilon}(a_{13})&0&\rho'_{\varepsilon}(a_{31}) \\
\rho'_{\varepsilon}(a_{12})&\rho'_{\varepsilon}(a_{21})&0
\end{array}
\right)\left(
\begin{array}{ccc}
dr_1 \\
dr_2 \\
dr_3
\end{array}\right) \notag \\
&=\left(
\begin{array}{ccc}
0&\rho'_{\varepsilon}(a_{23})&\rho'_{\varepsilon}(a_{32}) \\
\rho'_{\varepsilon}(a_{13})&0&\rho'_{\varepsilon}(a_{31}) \\
\rho'_{\varepsilon}(a_{12})&\rho'_{\varepsilon}(a_{21})&0
\end{array}
\right) \left(
\begin{array}{ccc}
\tau_{\varepsilon\delta}(r_1)&0&0 \\
0&\tau_{\varepsilon\delta}(r_2)&0 \\
0&0&\tau_{\varepsilon\delta}(r_3)
\end{array}
\right)\left(
\begin{array}{ccc}
du_1 \\
du_2 \\
du_3
\end{array}\right). \label{fml:symmetry 1}
\end{align}

For the $(\varepsilon,\varepsilon,\varepsilon)-$triangle
$\triangle v_1v_2v_3$ with angles $\theta_1,\theta_2,\theta_3$ and
edge lengths $l_1,l_2,l_3$, by the derivative cosine law
(\ref{fml:derivative1}) and (\ref{fml:symmetry 1}), we have
\begin{align*}
\left(
\begin{array}{ccc}
d\theta_1 \\
d\theta_2 \\
d\theta_3
\end{array}\right)
&=\frac{-1}{\sqrt{-\det G_l}} \left(
\begin{array}{ccc}
\tau_1(l_1)&0&0 \\
0&\tau_1(l_2)&0 \\
0&0&\tau_1(l_3)
\end{array}
\right)
\left(
\begin{array}{ccc}
-1&\rho'_{\varepsilon}(\theta_3)&\rho'_{\varepsilon}(\theta_2) \\
\rho'_{\varepsilon}(\theta_3)&-1&\rho'_{\varepsilon}(\theta_1) \\
\rho'_{\varepsilon}(\theta_2)&\rho'_{\varepsilon}(\theta_1)&-1
\end{array}
\right)\\
&\left(
\begin{array}{ccc}
0&\rho'_{\varepsilon}(a_{23})&\rho'_{\varepsilon}(a_{32}) \\
\rho'_{\varepsilon}(a_{13})&0&\rho'_{\varepsilon}(a_{31}) \\
\rho'_{\varepsilon}(a_{12})&\rho'_{\varepsilon}(a_{21})&0
\end{array}
\right)\left(
\begin{array}{ccc}
\tau_{\varepsilon\delta}(r_1)&0&0 \\
0&\tau_{\varepsilon\delta}(r_2)&0 \\
0&0&\tau_{\varepsilon\delta}(r_3)
\end{array}
\right)\left(
\begin{array}{ccc}
du_1 \\
du_2 \\
du_3
\end{array}\right)\\
&=:\frac{-1}{\sqrt{-\det G_l}}N\left(
\begin{array}{ccc}
du_1 \\
du_2 \\
du_3
\end{array}\right).
\end{align*}

To show the Jacobi matrix $A$ of
$(\theta_1(u),\theta_2(u),\theta_3(u))$ is symmetric, we only need
to check that $N=(N_{ij})$ is symmetric. In fact
\begin{equation*}
N_{ij}=\tau_1(l_i)\tau_{\varepsilon\delta}(r_j)
(-\rho'_{\varepsilon}(a_{jk})+\rho'_{\varepsilon}(\theta_j)\rho'_{\varepsilon}(a_{ji})).
\end{equation*}
By cosine law (\ref{fml:cosine3}),
\begin{multline*}
N_{ij}=\tau_1(l_i)\tau_{\varepsilon\delta}(r_j)
(-\frac{\varepsilon\tau'_{\varepsilon\delta}(r_k)
+\tau'_{\varepsilon\delta}(r_j)\tau'_1(l_i)}
{\tau_{\varepsilon\delta}(r_j)\tau_1(l_i)}\\
+\frac{\varepsilon \tau'_1(l_j)+\tau'_1(l_i)\tau'_1(l_k)}
{\tau_1(l_i)\tau_1(l_k)} \frac{\varepsilon
\tau'_{\varepsilon\delta}(r_i)+\tau'_{\varepsilon\delta}(r_j)\tau'_1(l_k)}{\tau_{\varepsilon\delta}(r_j)\tau_1(l_k)})
\end{multline*}
\begin{multline*}
=\frac{\varepsilon \tau'_1(l_k)[-\tau_1^2(l_k)+
\tau'_{\varepsilon\delta}(r_j)\tau'_1(l_j)+\tau'_{\varepsilon\delta}(r_i)\tau'_1(l_i)]}
{\tau_1^2(l_k)}\\
+\frac{[\tau_1'^2(l_k)-\tau_1^2(l_k)]\tau'_{\varepsilon\delta}(r_j)\tau'_1(l_i)+\varepsilon^2
\tau'_{\varepsilon\delta}(r_i)\tau'_1(l_j)}{\tau_1^2(l_k)}.
\end{multline*}

Since $\varepsilon=\pm 1,
\tau_1(l_k)=\frac12e^{l_k}-\frac12e^{-l_k}=\sinh l_k,$ we have
$\tau_1'^2(l_k)-\tau_1^2(l_k)=1=\varepsilon^2.$ Therefore $N_{ij}$
is symmetric in the index $i,j.$
\end{proof}

\begin{lemma}\label{thm:definite} Given $(\phi_1,\phi_2,\phi_3)\in I_\delta^3,$ the
Jacobi matrix $A$ of
$(\theta_1,\theta_2,\theta_3)=(\theta_1(u),\theta_2(u),\theta_3(u)):
u(\mathcal{M}_{\varepsilon,\delta}(\phi_1,\phi_2,\phi_3))\to\mathbb{R}^3$
is negative definite in the following cases:
\begin{enumerate}
\item[(i)] $\varepsilon=-1$ or\\
\item[(ii)] \rm(Colin de Verdi\'ere \cite{cdv}, Chow-Luo
\cite{cl}) \it $(\varepsilon,\delta)=(1,1)$ and
$\phi_i\in[\frac\pi2,\pi], i=1,2,3$.
\end{enumerate}
\end{lemma}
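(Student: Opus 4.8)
By Lemma~\ref{thm:symmetry} the Jacobian $A$ is symmetric, and the computation there actually exhibits it as $A=\frac{-1}{\sqrt{-\det G_l}}\,N$ with $N=N^{T}$; since $-\det G_l>0$ by Lemma~\ref{thm:determinant} (the $(\varepsilon,\varepsilon,\varepsilon)$-triangle $\triangle v_1v_2v_3$ being non-degenerate), the assertion ``$A$ is negative definite'' is equivalent to ``$-A$ is positive definite''. The plan is to prove the latter by showing that $-A$ is \emph{strictly diagonally dominant with positive diagonal}. Since $-A$ is symmetric, its $i$-th row sum equals its $i$-th column sum, and the $i$-th column sum of $A$ is $\sum_{j}\partial\theta_j/\partial u_i=\partial(\theta_1+\theta_2+\theta_3)/\partial u_i$. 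So everything reduces to two sign statements: \emph{(a)} $\partial\theta_i/\partial u_j>0$ whenever $i\neq j$, and \emph{(b)} $\partial(\theta_1+\theta_2+\theta_3)/\partial u_i<0$ for every $i$. Granting (a) and (b), the $i$-th row sum of $A$ is negative, hence $-A_{ii}>\sum_{j\neq i}A_{ij}=\sum_{j\neq i}|A_{ij}|$ (the off-diagonal entries of $A$ being positive by (a)), which is precisely strict diagonal dominance of $-A$ together with $-A_{ii}>0$, so $-A$ is positive definite. (If (a) should fail in one of the cases, one falls back on Sylvester's criterion, evaluating the three leading principal minors of $-A$, equivalently of $N$, directly.)

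To establish (a) and (b) I would compute the entries of $A$ by the chain rule along $u\mapsto r\mapsto l\mapsto(\theta_1,\theta_2,\theta_3)$, exactly as in the proof of Lemma~\ref{thm:symmetry}. For each auxiliary $(\varepsilon,\varepsilon,\delta)$-triangle $\triangle v_iv_jq_k$, freezing $\phi_k$ and applying the derivative cosine law (\ref{fml:derivative2}) gives $dl_k=\rho'_\varepsilon(a_{ji})\,dr_i+\rho'_\varepsilon(a_{ij})\,dr_j$ with $a_{ji},a_{ij}$ the generalized angles of $\triangle v_iv_jq_k$ opposite $r_i,r_j$; combining this with $dr_i=\tau_{\varepsilon\delta}(r_i)\,du_i$ from (\ref{fml:e}) and the derivative cosine law (\ref{fml:derivative1}) for $\triangle v_1v_2v_3$ reassembles $N$ and produces closed forms for all entries of $A$. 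When $(\varepsilon,\delta)=(1,1)$ one additionally uses the elementary fact that the angle of $\triangle v_1v_2v_3$ at a vertex $v_m$ is the sum of the two angles at $v_m$ contributed by the auxiliary triangles meeting there; inserting this together with $\rho'_1(x+y)=\rho'_1(x)\rho'_1(y)-\rho_1(x)\rho_1(y)$ makes each off-diagonal entry collapse to $\partial\theta_i/\partial u_j=(\text{positive factor})\cdot\rho_1(\gamma)\,\rho_1(\theta_j)>0$, where $\gamma$ is one of those auxiliary angles, which is (a); then $\theta_1+\theta_2+\theta_3=\pi-\mathrm{Area}(\triangle v_1v_2v_3)$, so (b) becomes $\partial\,\mathrm{Area}/\partial u_i>0$, which I would deduce from the cosine law using that $\phi_k\in[\tfrac\pi2,\pi]$ forces $a_{ji},a_{ij}$ to be acute and hence $\partial l_j/\partial r_i,\ \partial l_k/\partial r_i>0$, so that increasing $u_i$ pushes $v_i$ away from the fixed opposite edge $l_i$ and enlarges the triangle. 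When $\varepsilon=-1$ the ``angles'' $\theta_m$ are gluing lengths of a right-angled hexagon, no angle-sum identity is available, and (a), (b) must instead be read off directly from the $\tau$-formulas for $N$, using that every $\rho'_{-1}$ and $\tau'_{\varepsilon\delta}$ occurring is of $\cosh$-type hence positive, that all generalized edge lengths satisfy $l_m>0$, and the identity $\tau_1'^2-\tau_1^2=1$ already exploited in Lemma~\ref{thm:symmetry}.

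The linear-algebra skeleton (symmetry $\Rightarrow$ diagonal dominance $\Rightarrow$ definiteness) is routine; the genuine content, and the step I expect to be the main obstacle, is the uniform verification of (a) and especially (b) across all six admissible types $(\varepsilon,\varepsilon,\delta)$, since these call on the six distinct cosine laws of Appendix~A and the $\varepsilon=-1$ hexagon cases behave quite differently from the $\varepsilon=1$ triangle case: the convenient route to (b) through ``$\sum\theta=\pi-\mathrm{Area}$'' does not exist when the $\theta_m$ are lengths rather than cone angles, so (b) has to be extracted from the algebraic identities themselves. It is exactly here that the two excluded configurations $(1,1,-1)$ and $(1,1,0)$ break down, and that the restriction $\phi_i\in[\tfrac\pi2,\pi]$ in part~(ii) is indispensable. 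A secondary nuisance is the careful bookkeeping of which auxiliary-triangle angle sits at which vertex of $\triangle v_1v_2v_3$, so that the chain-rule assembly reproduces the symmetric matrix $N$ of Lemma~\ref{thm:symmetry} rather than its transpose.
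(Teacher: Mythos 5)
The pivot of your argument---claim (a), that all off-diagonal entries $\partial\theta_i/\partial u_j$, $i\neq j$, are positive---is false precisely in the new case (i), so the diagonal-dominance route collapses there. Take $(\varepsilon,\varepsilon,\delta)=(-1,-1,-1)$, $\phi_1=\phi_2=\phi_3=\phi$ and $r_1=r_2=r_3=r$ with $\cosh\phi\sinh^2r-\cosh^2r>1$. Then all three edge lengths equal a common $l$ with $\cosh l=\cosh\phi\sinh^2r-\cosh^2r$, the right-angled hexagon cosine law gives $\cosh\theta_1=\cosh l/(\cosh l-1)$, and differentiating $\cosh\theta_1=(\cosh l_1+\cosh l_2\cosh l_3)/(\sinh l_2\sinh l_3)$ using $\partial l_1/\partial r_2=\partial l_3/\partial r_2=(\cosh\phi-1)\sinh r\cosh r/\sinh l>0$ yields
\begin{equation*}
\frac{\partial\cosh\theta_1}{\partial r_2}
=\frac{\partial l_1}{\partial r_2}\Bigl(\frac{1}{\sinh l}+\coth l\,(1-\cosh\theta_1)\Bigr)
=-\,\frac{\partial l_1}{\partial r_2}\cdot\frac{1}{\sinh l\,(\cosh l-1)}<0,
\end{equation*}
and since $u_2$ is an increasing function of $r_2$, $\partial\theta_1/\partial u_2<0$ at every such point. (Negative definiteness of a symmetric matrix of course imposes no sign on its off-diagonal entries.) Once (a) is gone, negativity of the row sums in (b) gives no dominance whatsoever, and your fallback---``apply Sylvester's criterion to $N$ directly''---is not an argument: verifying the principal minors at an arbitrary point of the noncompact domain is exactly the hard computation (it is what forced the geometric argument plus Maple verification in \cite{cl}). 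The idea you are missing is the paper's reduction: by Lemma \ref{thm:symmetry}, $A=-N/\sqrt{-\det G_l}$ with $N$ symmetric, and $\det N>0$ at \emph{every} point ($N$ factors into two positive diagonal matrices, the negative of an angle Gram matrix, and the matrix of the $\rho'_\varepsilon(a_{ij})$, each with positive determinant, using Lemma \ref{thm:determinant}); hence on the connected set $u(\mathcal{M}_{\varepsilon,\delta}(\phi_1,\phi_2,\phi_3))$ the signature of $N$ is constant, and positive definiteness need only be checked at the single symmetric point $r_1=r_2=r_3$, where the perpendicular-foot identity (\ref{fml:definite 1}) reduces everything to the two explicit minor inequalities (\ref{fml:definite 2}) and (\ref{fml:definite 3}).

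Your treatment of case (ii) also rests on a false identity: the angle of $\triangle v_1v_2v_3$ at $v_i$ is \emph{not} the sum of the angles at $v_i$ of the two auxiliary triangles $\triangle v_iv_jq_k$ and $\triangle v_iv_kq_j$. For $\phi_k=\pi$ (tangent circles) the auxiliary triangle degenerates and its angles at $v_i,v_j$ vanish while $\theta_i\neq 0$; more structurally, that sum is independent of $\phi_i$, whereas $\theta_i$ depends on $\phi_i$ through $l_i$. So your derivation of (a) in case (ii) does not stand; the true statements (a) and (b) for $(\varepsilon,\delta)=(1,1)$, $\phi_i\in[\frac\pi2,\pi]$ are exactly the nontrivial monotonicity lemmas of Thurston and Chow--Luo, and invoking them reproduces the earlier proof this lemma is meant to supersede while delivering nothing for $\varepsilon=-1$.
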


We remark that the case (ii) for $\phi_i=\pi$ was first proved by
Colin de Verdi\'ere \cite{cdv}, and was proved for
$\phi_i\in[\frac\pi2,\pi]$ in \cite{cl} using the Maple program. Our
proof of (ii) is new.

\begin{proof} In the proof of Lemma \ref{thm:symmetry}, the Jacobi
matrix $A=\frac{-1}{\sqrt{-\det G_l}}N.$ Hence it is sufficient to
show $N$ is positive definite. First $\det N>0.$ Indeed $N$ is a
product of four matrixes. The first one and forth one are diagonal
matrixes with positive determinant. By Lemma
\ref{thm:determinant}, the determinant of the second matrix
(negative of an angle Gram matrix) is positive. The determinant of
the third matrix is
$\rho'_\varepsilon(a_{12})\rho'_\varepsilon(a_{23})\rho'_\varepsilon(a_{31})+
\rho'_\varepsilon(a_{32})\rho'_\varepsilon(a_{21})\rho'_\varepsilon(a_{13})>0.$

Since the set
$u(\mathcal{M}_{\varepsilon,\delta}(\phi_1,\phi_2,\phi_3))$ is
connected and the $\det N\neq 0,$ to show $N$ is positive
definite, we only need to check $N$ is positive definite at one
point in
$u(\mathcal{M}_{\varepsilon,\delta}(\phi_1,\phi_2,\phi_3))$. We
choose the point such that $r_1=r_2=r_3=r.$

\begin{center}
\includegraphics[scale=.7]{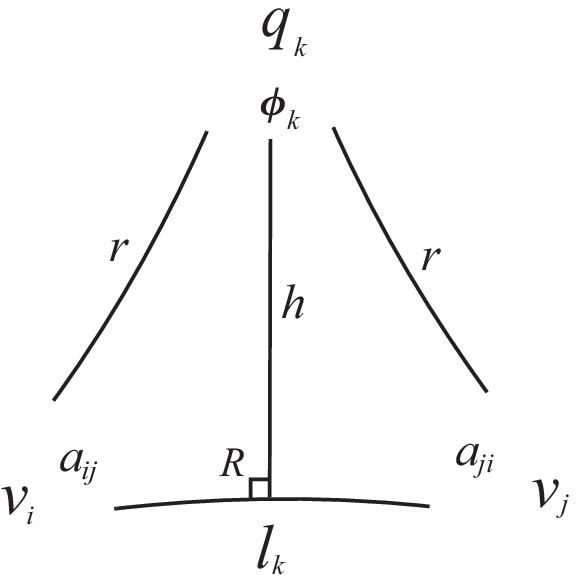}
\end{center}
\caption{\label{fig:special}}

Now in Figure \ref{fig:packing-data} and Figure \ref{fig:special},
in the $(\varepsilon,\varepsilon,\delta)-$triangle $\triangle
v_iv_jq_k$ with edge lengths $r,r,l_k$ and opposite generalized
angles $a_{ij}=a_{ji},\phi_k$, let $h$ be the geodesic length
realizing the distance between the edge $v_iv_j$ and the
generalized vertex $q_k$. In the $(\varepsilon,1,\delta)$-triangle
$\triangle v_iRq_k,$ where $Rq_k$ is perpendicular to $v_iv_j,$ by
the cosine law (\ref{fml:cosine3}), we have
$$\cos\frac\pi2=\frac{-\tau'_{\varepsilon\delta}(r)+\tau'_{\delta}(h)\tau'_\varepsilon(\frac{l_k}2)}
{\tau_{\delta}(h)\tau_\varepsilon(\frac{l_k}2)}.$$ Thus
$\tau'_{\delta}(h)=\tau'_{\varepsilon\delta}(r)/\tau'_\varepsilon(\frac{l_k}2)$.
Also in $\triangle v_iRq_k,$ by the cosine law
(\ref{fml:cosine3}), we have
$$
\rho'_{\varepsilon}(a_{ij})=\frac{-\varepsilon
\tau'_{\delta}(h)+\tau'_\varepsilon(\frac{l_k}2)\tau'_{\varepsilon\delta}(r)}
{\tau_\varepsilon(\frac{l_k}2)\tau_{\varepsilon\delta}(r)} =
\frac{\tau'_{\varepsilon\delta}(r)[-\varepsilon+\tau_\varepsilon'^2(\frac{l_k}2)]}{\tau'_\varepsilon(\frac{l_k}2)\tau(\frac{l_k}2)\tau_{\varepsilon\delta}(r)}.$$
Since
$$-\varepsilon+\tau_\varepsilon'^2(l)=-\varepsilon+(\frac12e^l+\frac12\varepsilon
e^{-l})^2=(\frac12e^l-\frac12\varepsilon
e^{-l})^2=\tau_\varepsilon^2(l),$$ then
\begin{align}\label{fml:definite 1}
\rho'_{\varepsilon}(a_{ij})=\frac{\tau'_{\varepsilon\delta}(r)\tau_\varepsilon(\frac{l_k}2)}
{\tau_{\varepsilon\delta}(r)\tau'_\varepsilon(\frac{l_k}2)}.
\end{align}

By substituting (\ref{fml:definite 1}) into $N$, let
$$N_1=\left(
\begin{array}{ccc}
-1&\rho'_{\varepsilon}(\theta_3)&\rho'_{\varepsilon}(\theta_2) \\
\rho'_{\varepsilon}(\theta_3)&-1&\rho'_{\varepsilon}(\theta_1) \\
\rho'_{\varepsilon}(\theta_2)&\rho'_{\varepsilon}(\theta_1)&-1
\end{array}
\right)\left(
\begin{array}{ccc}
0&\tau_\varepsilon(\frac{l_1}2)/\tau_\varepsilon'(\frac{l_1}2)
&\tau_\varepsilon(\frac{l_1}2)/\tau_\varepsilon'(\frac{l_1}2) \\
\tau_\varepsilon(\frac{l_2}2)/\tau_\varepsilon'(\frac{l_2}2)&0
&\tau_\varepsilon(\frac{l_2}2)/\tau_\varepsilon'(\frac{l_2}2) \\
\tau_\varepsilon(\frac{l_3}2)/\tau_\varepsilon'(\frac{l_3}2)
&\tau_\varepsilon(\frac{l_3}2)/\tau_\varepsilon'(\frac{l_3}2)&0
\end{array}
\right).$$ Then $N,N_1$ differ by left and right multiplication by
diagonal matrices of positive diagonal entries. We will show that
the determinants of the $1\times1, 2\times2$ principal submatrices
of $N_1$ are positive. This implies that the determinants of the
$1\times1, 2\times2$ principal submatrices of $N$ are positive.
Thus $N$ is positive definite.

Indeed, the determinant of the $1\times1$ principal submatrix of
$N_1$ is
\begin{align}\label{fml:definite 2}
\rho'_\varepsilon(\theta_3)\frac{\tau_\varepsilon(\frac{l_2}2)}{\tau_\varepsilon'(\frac{l_2}2)}
+\rho'_\varepsilon(\theta_2)\frac{\tau_\varepsilon(\frac{l_3}2)}{\tau_\varepsilon'(\frac{l_3}2)}.
\end{align} And the determinant of the $2\times2$ principal matrix of $N_1$ is
\begin{multline}\label{fml:definite 3}
[\rho_\varepsilon'^2(\theta_3)-1]
\frac{\tau_\varepsilon(\frac{l_1}2)}{\tau_\varepsilon'(\frac{l_1}2)}
\frac{\tau_\varepsilon(\frac{l_2}2)}{\tau_\varepsilon'(\frac{l_2}2)}
+[\rho'_\varepsilon(\theta_2)+\rho'_\varepsilon(\theta_1)\rho'_\varepsilon(\theta_3)]
\frac{\tau_\varepsilon(\frac{l_2}2)}{\tau_\varepsilon'(\frac{l_2}2)}
\frac{\tau_\varepsilon(\frac{l_3}2)}{\tau_\varepsilon'(\frac{l_3}2)}\\
+[\rho'_\varepsilon(\theta_1)+\rho'_\varepsilon(\theta_2)\rho'_\varepsilon(\theta_2)]
\frac{\tau_\varepsilon(\frac{l_1}2)}{\tau_\varepsilon'(\frac{l_1}2)}
\frac{\tau_\varepsilon(\frac{l_3}2)}{\tau_\varepsilon'(\frac{l_3}2)}.
\end{multline}

(i) If $\varepsilon=-1$, then
$\rho_\varepsilon'(\theta_i)=\cosh\theta_i.$ Each term in
(\ref{fml:definite 2}) and (\ref{fml:definite 3}) is positive.
Hence (\ref{fml:definite 2}) and (\ref{fml:definite 3}) are
positive.

(ii) If $\varepsilon=1$, then
$\rho_\varepsilon'(\theta_i)=\cos\theta_i.$ We see that the
expression in (\ref{fml:definite 2}) is positive is the same as
\begin{equation}\label{fml:1}
\cos\theta_3\tanh\frac{l_2}2+\cos\theta_2\tanh\frac{l_3}2>0.
\end{equation}
By cosine law, (\ref{fml:1}) is equivalent to
\begin{equation}\label{fml:2}
\frac{-\cosh l_3 +\cosh l_1\cosh l_2}{\sinh l_1\sinh
l_2}\frac{\sinh l_2}{1+\cosh l_2} +\frac{-\cosh l_2 +\cosh
l_1\cosh l_3}{\sinh l_1\sinh l_3}\frac{\sinh l_3}{1+\cosh l_3}>0.
\end{equation}
By calculation, (\ref{fml:2}) is equivalent to
\begin{equation}\label{fml:3}
\cosh l_1>\frac{\cosh^2 l_2+\cosh^2 l_3+\cosh l_2+\cosh
l_3}{2\cosh l_2\cosh l_3+\cosh l_2+\cosh l_3}.
\end{equation}
Since $l_1>|l_2-l_3|,$ therefore $\cosh l_1>\cosh (l_2-l_3).$ To
show (\ref{fml:3}) holds, it is enough to check
\begin{equation}\label{fml:4}
\cosh l_2\cosh l_3-\sinh l_2\sinh l_3\geq \frac{\cosh^2
l_2+\cosh^2 l_3+\cosh l_2+\cosh l_3}{2\cosh l_2\cosh l_3+\cosh
l_2+\cosh l_3}.
\end{equation}

We simplify the notation by introducing $a:=\cosh l_2>1, b:=\cosh
l_3>1.$ Then (\ref{fml:4}) is rewritten as
\begin{equation}\label{fml:5}
ab-\sqrt{(a^2-1)(b^2-1)}\geq\frac{a^2+b^2+a+b}{2ab+a+b}.
\end{equation}
(\ref{fml:5}) is equivalent to
\begin{equation}\label{fml:6}
ab-\frac{a^2+b^2+a+b}{2ab+a+b}\geq\sqrt{(a^2-1)(b^2-1)}.
\end{equation}

Since $a>1,b>1,$ the left hand side of (\ref{fml:6}) is positive.
To show (\ref{fml:6}) holds, we square the two sides and simplify.
We have
\begin{align*}
&(ab^4+a^4b-a^3b^2-a^2b^3)+(a^4+b^4-2a^2b^2)+(a^3+b^3-ab^2-a^2b)\geq0\\
\Leftrightarrow& (ab+1)(a^3+b^3-a^2b-ab^2)+(a^2-b^2)^2\geq0\\
\Leftrightarrow& (ab+1)(a+b)(a-b)^2+(a^2-b^2)^2\geq0.
\end{align*}
This shows that the expression in (\ref{fml:definite 2}) is
positive.

We see that the expression in (\ref{fml:definite 3}) is positive
is the same as
\begin{multline}\label{fml:7}
-\sin^2 \theta_3\tanh\frac{l_1}2\tanh\frac{l_2}2+
[\cos\theta_2+\cos\theta_1\cos\theta_3]\tanh\frac{l_2}2\tanh\frac{l_3}2\\
+[\cos\theta_1+\cos\theta_2\cos\theta_3]\tanh\frac{l_1}2\tanh\frac{l_3}2>0.
\end{multline}
By the cosine law, (\ref{fml:7}) is equivalent to
\begin{multline}\label{fml:8}
-\sin^2 \theta_3\tanh\frac{l_1}2\tanh\frac{l_2}2+ \cosh
l_2\sin\theta_1\sin\theta_3\tanh\frac{l_2}2\tanh\frac{l_3}2\\
+\cosh
l_1\sin\theta_2\sin\theta_3\tanh\frac{l_1}2\tanh\frac{l_3}2>0.
\end{multline}
By the sine law, (\ref{fml:8}) is equivalent to
\begin{align*}
&(\cosh l_1\sinh l_2\tanh\frac{l_1}2+\cosh l_2\sinh
l_1\tanh\frac{l_2}2)\tanh\frac{l_3}2> \sinh
l_3\tanh\frac{l_1}2\tanh\frac{l_2}2\\
\Leftrightarrow& \frac{\cosh l_1\sinh
l_2}{\tanh\frac{l_2}2}+\frac{\cosh l_2\sinh
l_1}{\tanh\frac{l_1}2}> \frac{\sinh l_3}{\tanh\frac{l_3}2}\\
\Leftrightarrow& \cosh l_1(1+\cosh l_2)+\cosh l_2(1+\cosh
l_1)>1+\cosh l_3 \\
\Leftrightarrow& (\cosh l_1+\cosh l_2-1)+(2\cosh l_1\cosh
l_2-\cosh l_3)>0.
\end{align*} This is true since $\cosh l_1+\cosh l_2>1$ and $2\cosh
l_1\cosh l_2>\cosh l_1\cosh l_2+\sinh l_1\sinh
l_2=\cosh(l_1+l_2)>\cosh l_3.$ Thus shows that the expression in
(\ref{fml:definite 3}) is positive.
\end{proof}

\begin{corollary}\label{thm:closed-form-packing} Given $(\phi_1,\phi_2,\phi_3)\in I_\delta^3,$
the differential 1-form $\sum_{i=1}^3\theta_idu_i$ is closed in
$u(\mathcal{M}_{\varepsilon,\delta}(\phi_1,\phi_2,\phi_3)).$
Furthermore, its integration $w(u)=\int^u\sum_{i=1}^3\theta_idu_i$
is a strictly concave function defined in
$u(\mathcal{M}_{\varepsilon,\delta}(\phi_1,\phi_2,\phi_3))$ if
either
\begin{enumerate}
\item[(i)] $\varepsilon=-1$ or \\
\item[(ii)] $(\varepsilon,\delta)=(1,1)$ and
$\phi_i\in[\frac\pi2,\pi], i=1,2,3.$
\end{enumerate}
Furthermore
\begin{align}\label{fml:gradient}
\frac{\partial w}{\partial u_i} =\theta_i.
\end{align}
\end{corollary}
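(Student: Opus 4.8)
The plan is to obtain the corollary by assembling the three preceding lemmas, since all of the analytic work has already been done there and nothing new needs to be computed.

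First I would record the shape of the domain: by Lemma~\ref{thm:moduli}, $u(\mathcal{M}_{\varepsilon,\delta}(\phi_1,\phi_2,\phi_3))$ is an open convex polyhedron in $\mathbb{R}^3$ in case (i) for every admissible $\phi$, and in case (ii) under the stated hypothesis $\phi_i\in[\frac\pi2,\pi]$; in particular it is open and simply connected. Next, by Lemma~\ref{thm:symmetry}, the Jacobian matrix $A=(\partial\theta_i/\partial u_j)_{3\times3}$ of the map $u\mapsto(\theta_1,\theta_2,\theta_3)$ is symmetric for $\varepsilon=\pm1$. Symmetry of $A$ is exactly the assertion that the $1$-form $\omega=\sum_{i=1}^3\theta_i\,du_i$ is closed on the domain, which gives the first claim. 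Since the domain is simply connected, $\omega$ is exact, so $w(u)=\int^u\omega$ is path-independent and defines a smooth function there, and the fundamental theorem of calculus yields $\partial w/\partial u_i=\theta_i$, which is (\ref{fml:gradient}).

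For strict concavity I would observe that the Hessian of $w$ is precisely $A$, and invoke Lemma~\ref{thm:definite}, which says that $A$ is negative definite throughout the domain in cases (i) and (ii). A smooth function with everywhere negative definite Hessian on a convex open set is strictly concave, so $w$ is strictly concave in those two cases, completing the proof.

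The only point requiring any care is to keep track of which hypothesis is used where: closedness of $\omega$ needs nothing beyond the symmetry of $A$ from Lemma~\ref{thm:symmetry}, valid for all $\varepsilon=\pm1$; defining $w$ by integration and deducing $\partial w/\partial u_i=\theta_i$ additionally uses that the domain is simply connected, which Lemma~\ref{thm:moduli} supplies exactly in cases (i) and (ii); and strict concavity additionally uses the negative definiteness of $A$ from Lemma~\ref{thm:definite}, again exactly in those two cases. There is essentially no new computation here --- the genuine difficulty, the delicate chain of inequalities for the $(1,1,1)$-triangle with $\phi_i\in[\frac\pi2,\pi]$, was already surmounted in the proof of Lemma~\ref{thm:definite}.
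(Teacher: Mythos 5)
Your proposal is correct and is exactly the argument the paper intends: the corollary is stated as an immediate consequence of Lemma \ref{thm:moduli} (open convex, hence simply connected, domain), Lemma \ref{thm:symmetry} (symmetry of the Jacobian, giving closedness of $\sum_i\theta_i\,du_i$), and Lemma \ref{thm:definite} (negative definiteness, giving strict concavity), with the gradient identity coming from the path-independent integration. Your bookkeeping of which hypothesis enters at which step matches the paper's usage, so there is nothing to add.
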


Let's prove the rigidity of circle packing for $\varepsilon=-1$,
or $(\varepsilon,\delta)=(1,1)$ and $\Phi:E\to[\frac\pi2,\pi].$
Fix $\Phi:E\to I_\delta.$ For each $r\in
\mathcal{N}_{\varepsilon,\delta}(\Phi),$ define the function $u$
in term of $r$ as in (\ref{fml:e}) by
$$u(v)=-\int_{r(v)}^\infty\frac1{\tau_{\varepsilon\delta}(t)}dt.$$

The set of all values of $u$ is
$u(\mathcal{N}_{\varepsilon,\delta}(\Phi))$ which is an open
convex set in $\mathbb{R}^V$ due to Lemma \ref{thm:moduli}. To be
more precisely, let us label vertices $V=\{v_1,...,v_n\}$ and use
$\{i,j,k\}\in F$ to denote a generalized triangle in $F$ with
vertices $v_i,v_j,v_k.$ Let $u_i=u(v_i).$ Then
$u(\mathcal{N}_{\varepsilon,\delta}(\Phi))=\{u\in\mathbb{R}^V|(u_i,u_j,u_k)
\in u(\mathcal{M}_{\varepsilon,\delta}(\phi_i,\phi_j,\phi_k))$ if
$\{i,j,k\}\in F$\} is convex since it is the intersection of the
convex set $\prod_{\{i,j,k\}\in
F}u(\mathcal{M}_{\varepsilon,\delta}(\phi_i,\phi_j,\phi_k))$ with
affine spaces.

We now use the function in Corollary \ref{thm:closed-form-packing}
to introduce a function $W:
u(\mathcal{N}_{\varepsilon,\delta}(\Phi))\to\mathbb{R}$ by
defining
$$W(u)=\sum_{\{i,j,k\}\in F} w(u_i, u_j, u_k)$$ where the sum is
over all triangles in $F$ with vertices $\{v_i,v_j,v_k\}.$ By
Corollary \ref{thm:closed-form-packing}, $W$ is smooth and
strictly concave down in
$u(\mathcal{N}_{\varepsilon,\delta}(\Phi))$ so that
$$\frac{\partial W}{\partial u_i} =\widetilde{K}(v_i)$$ by (\ref{fml:gradient})
and the definition of $\widetilde{K}.$

By Lemma \ref{thm:convex}, the map $\nabla
W:u(\mathcal{N}_{\varepsilon,\delta}(\Phi))\to\mathbb{R}^V$ is a
smooth embedding. Thus the map from
$\{r\in\mathcal{N}_{\varepsilon,\delta}(\Phi)\}$ to
$\{\widetilde{K}\in\mathbb{R}^V_{>0}\}$ is a smooth injective map.

\subsection{The image of $\widetilde{K}$ for $\varepsilon=-1$}

To prove that the image $X=\{\widetilde{K}(v)|v\in
V\}=\mathbb{R}^V_{>0}$ for $\varepsilon=-1$, we will show that $X$
is both open and closed in $\mathbb{R}^V_{>0}.$ By definition,
$X\subset \mathbb{R}^V_{>0}$ and $X$ is open due to the
injectivity of the map from
$\{r\in\mathcal{N}_{-1,\delta}(\Phi)\}$ to $\mathbb{R}^V_{>0}.$ It
remains to prove that $X$ is closed in $\mathbb{R}^V_{>0}.$ To
this end, let us first establish the following lemma.

\begin{lemma}\label{thm:infinity-packing} Let $\varepsilon=-1$ and $(\phi_i,\phi_j,\phi_k)$ be
given so that
$(\theta_i,\theta_j,\theta_k)=(\theta_i(r_i,r_j,r_k),$
$\theta_j(r_i,r_j,r_k),\theta_k(r_i,r_j,r_k))$ are considered as
functions of $r_i,r_j,r_k\in J_{-\delta}.$ Then
$$\lim_{r_k\to\infty}\theta_k(r_i,r_j,r_k)=0$$ and the convergence is
uniform.
\end{lemma}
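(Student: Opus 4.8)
The plan is to express $\theta_k$ through the cosine law for the type $(-1,-1,-1)$ triangle $\triangle v_iv_jv_k$ (a right-angled hexagon) and then to estimate its three edge lengths via the explicit constructions behind Lemma~\ref{thm:realize}. Write $l_i=l(v_jv_k),\ l_j=l(v_kv_i),\ l_k=l(v_iv_j)$ for the edges of $\triangle v_iv_jv_k$, so that $\theta_k$ is the generalized angle opposite the side of length $l_k$. Specializing the cosine law (\ref{fml:cosine3}) to $\varepsilon_1=\varepsilon_2=\varepsilon_3=-1$, where $\tau'_1=\cosh$, $\rho'_{-1}=\cosh$ and $\rho_{-1}=\sinh$, gives
$$\cosh\theta_k=\frac{\cosh l_k+\cosh l_i\cosh l_j}{\sinh l_i\sinh l_j},\qquad\text{so that}\qquad\cosh\theta_k-1=\frac{\cosh l_k+\cosh(l_i-l_j)}{\sinh l_i\sinh l_j}.$$
Since $\theta_k\ge0$ and $\cosh$ is increasing on $[0,\infty)$, it suffices to show that the right-hand side tends to $0$ as $r_k\to\infty$, with a rate controlled independently of $(r_i,r_j)$.

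The point is the asymmetric dependence of the lengths on the radii: $l_k$ is a function of $r_i,r_j,\phi_k$ only and is independent of $r_k$, whereas $l_i$ is produced from the auxiliary type $(-1,-1,\delta)$ triangle $\triangle v_jv_kq_i$ and so depends on $r_j,r_k,\phi_i$, and symmetrically $l_j$ depends on $r_i,r_k,\phi_j$. From the cosine laws recorded in the proof of Lemma~\ref{thm:realize},
$$\cosh l_i=\begin{cases}\sinh r_j\sinh r_k-\cos\phi_i\,\cosh r_j\cosh r_k,&\delta=1,\\[1mm]\tfrac12\phi_i^{2}\,e^{\,r_j+r_k}-\cosh(r_j-r_k),&\delta=0,\\[1mm]\cosh\phi_i\,\sinh r_j\sinh r_k-\cosh r_j\cosh r_k,&\delta=-1,\end{cases}$$
and similarly for $l_j$. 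In each of the three cases $\cosh l_i$ is, up to bounded lower-order terms, a positive multiple of $e^{r_k}$: the multiplier is $\sinh r_j-\cos\phi_i\cosh r_j$, resp.\ $\tfrac12\phi_i^{2}e^{r_j}-e^{-r_j}$, resp.\ $\cosh\phi_i\sinh r_j-\cosh r_j$, and it is positive precisely because $(r_i,r_j,r_k)\in\mathcal{M}_{-1,\delta}(\phi_i,\phi_j,\phi_k)$ forces, through Lemma~\ref{thm:M}, the inequality defining $\mathcal{D}_{-1,\delta}(\phi_i)$ in Lemma~\ref{thm:realize}. Hence $l_i\to\infty$ and $l_j\to\infty$ as $r_k\to\infty$, $\sinh l_i\sinh l_j$ grows like $e^{2r_k}$, while $\cosh l_k$ and $\cosh(l_i-l_j)$ remain controlled relative to it; feeding this into the identity above gives $\cosh\theta_k\to1$, i.e.\ $\theta_k\to0$.

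The main obstacle is to make this convergence uniform, and for that I would pass to the coordinates $u_i=-\int_{r_i}^{\infty}\tau_{-\delta}(t)^{-1}\,dt$ of Lemma~\ref{thm:moduli}. There $r_k\to\infty$ translates into $u_k\to0^{-}$, the set $u(\mathcal{M}_{-1,\delta}(\phi_i,\phi_j,\phi_k))$ becomes the explicit convex polyhedron $\{u\in\mathbb{R}^{3}_{<0}:u_a+u_b>-\phi_c,\ \{a,b,c\}=\{i,j,k\}\}$, and the edge lengths acquire clean closed forms; for instance when $\delta=1$,
$$\cosh l_i=\frac{\cos u_j\cos u_k-\cos\phi_i}{\sin u_j\sin u_k},\qquad \sinh l_i=\frac{\sqrt{\bigl(\cos(u_j+u_k)-\cos\phi_i\bigr)\bigl(\cos(u_j-u_k)-\cos\phi_i\bigr)}}{\sin u_j\sin u_k}.$$
Bookkeeping the constants in such formulas, one obtains, for $u_k$ small, lower bounds $\sinh l_i,\sinh l_j\ge c/|u_k|$ and matching upper bounds for $\cosh l_k$ and $\cosh(l_i-l_j)$ on any region in which $(u_i,u_j)$ is confined by the constraints $u_a+u_b>-\phi_c$; combined, these give $\cosh\theta_k-1\le\epsilon$ once $r_k$ exceeds a threshold depending only on $\epsilon$ and $(\phi_i,\phi_j,\phi_k)$. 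I would carry out $\delta=1$ in detail and remark that $\delta=0$ and $\delta=-1$ are handled identically with the corresponding formulas of Lemma~\ref{thm:realize}. The genuinely delicate point, which is where the real work sits, is controlling the rate of divergence of $l_i$ near the face of $\partial u(\mathcal{M}_{-1,\delta})$ on which $u_j\to-\phi_i$ (and its $\delta=0,-1$ analogues), since there $\cosh l_i$ is a priori only known to exceed $1$; this forces a careful accounting of the constants in the closed forms above together with a use of the polyhedral structure of $u(\mathcal{M}_{-1,\delta}(\phi_i,\phi_j,\phi_k))$ to keep $(u_i,u_j)$ in a region where $l_i$ and $l_j$ both diverge at a controlled rate.
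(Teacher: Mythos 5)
Your first paragraph is essentially the paper's own computation: the identity $\cosh\theta_k=\frac{\cosh l_k+\cosh l_i\cosh l_j}{\sinh l_i\sinh l_j}$ for the $(-1,-1,-1)$ triangle together with the expressions of $\cosh l_i,\cosh l_j,\cosh l_k$ through the radii is exactly how the paper argues (it then just checks the three sequential cases in which $r_i,r_j$ tend to constants of $J_{-\delta}$ or to $\infty$). The trouble is the step on which your whole quantitative scheme rests: you assert that membership in $\mathcal{M}_{-1,\delta}(\phi_i,\phi_j,\phi_k)$ forces the multiplier of $e^{r_k}$ in $\cosh l_i$ (for $\delta=1$ this multiplier is $\sinh r_j-\cos\phi_i\cosh r_j$) to be positive in a way that makes $\sinh l_i\sinh l_j$ grow like $e^{2r_k}$, and later that on any region cut out by the constraints one has $\sinh l_i,\sinh l_j\ge c/|u_k|$. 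But the constraint supplied by Lemma \ref{thm:M} is only $\cosh l_i>1$ at each finite $r_k$; it does not bound the multiplier below, and along families approaching the face $u_j+u_k=-\phi_i$ the multiplier may decay like $e^{-r_k}$, so $l_i$ need not diverge at all.

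Concretely, for $\delta=1$ take $\phi_i=\pi/3$, $\phi_j=\phi_k=\pi/2$, fix $r_i=2$, and for each large $r_k$ choose $r_j=r_j(r_k)$ slightly above the root of $\tanh r=\cos\phi_i$ so that $\cosh l_i=\sinh r_j\sinh r_k-\cos\phi_i\cosh r_j\cosh r_k$ equals $2$ identically. All three inequalities defining $\mathcal{M}_{-1,1}(\phi_i,\phi_j,\phi_k)$ hold along this family, $\cosh l_j=\sinh 2\,\sinh r_k\to\infty$ and $\cosh l_k$ stays bounded, yet $\sinh l_i\equiv\sqrt3$ while $u_k\to0^-$, so your lower bound $\sinh l_i\ge c/|u_k|$ fails; moreover $\cosh\theta_k\to\coth l_i=2/\sqrt3>1$, so $\theta_k$ stays bounded away from $0$ along this admissible family even though $r_k\to\infty$ (analogous families exist for $\delta=0,-1$). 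Hence the ``delicate point'' you flag at the end is not a matter of bookkeeping constants: on the full region cut out by the constraints the uniform estimate you aim at is simply not available, and the proposal cannot be completed as written. A workable version must confine $(u_i,u_j)$ away from the faces $u_j+u_k=-\phi_i$ and $u_i+u_k=-\phi_j$, equivalently keep the multipliers $2\rho_\delta^2(\frac{\phi_i}2)\tau_{-\delta}(r_j)-e^{-r_j}$ and $2\rho_\delta^2(\frac{\phi_j}2)\tau_{-\delta}(r_i)-e^{-r_i}$ bounded below by a positive constant --- which is in effect what the paper's three-case limit argument does when it declares the limiting bracket strictly positive --- or else must handle the boundary families by a separate mechanism (note that along the family above the angle $\theta_j$ at the vertex with bounded radius blows up, which is the kind of observation that can rescue the later closedness argument where the lemma is applied).
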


\begin{proof} We only need to check that for
constants $a,b,c\in J_{-\delta}$ the following holds.

(1) If $\lim r_i=a,\lim r_j=b,\lim r_k=\infty,$ then $\lim
\theta_k=0.$

(2) If $\lim r_i=c,\lim r_j=\infty,\lim r_k=\infty,$ then $\lim
\theta_k=0.$

(3) If $\lim r_i=\infty,\lim r_j=\infty,\lim r_k=\infty,$ then
$\lim \theta_k=0.$

The strategy of the proof is the same for all three cases. First,
in $\triangle P_jP_kQ_i,$ $\triangle P_kP_iQ_j,$ $\triangle
P_iP_jQ_k$ of type $(-1,-1,\delta)$, by the cosine law
(\ref{fml:cosine2}), the length $l_i,l_j,l_k$ can be write as
functions of $r_i,r_j,r_k:$
\begin{align*}
\cosh
l_i&=2\rho_\delta^2(\frac{\phi_i}2)\tau_{-\delta}(r_j)\tau_{-\delta}(r_k)-\frac12(e^{r_j-r_k}+e^{r_k-r_j}),\\
\cosh
l_j&=2\rho_\delta^2(\frac{\phi_j}2)\tau_{-\delta}(r_k)\tau_{-\delta}(r_i)-\frac12(e^{r_k-r_i}+e^{r_i-r_k}),\\
\cosh
l_k&=2\rho_\delta^2(\frac{\phi_k}2)\tau_{-\delta}(r_i)\tau_{-\delta}(r_j)-\frac12(e^{r_i-r_j}+e^{r_j-r_i}).
\end{align*}
When the limits of $r_i,r_j,r_k$ are given, we can find the limits
of $l_i,l_j,l_k.$ Then in $\triangle P_iP_jP_k$ (of type
$(-1,-1,-1)$), by the cosine law (\ref{fml:cosine3}), write
$\theta_k$ as a function of $l_i,l_j,l_k:$
$$\cosh \theta_k=\frac{\cosh l_k +\cosh
l_i\cosh l_j}{\sinh l_i\sinh l_j}.
$$
Then we find the limit of $\theta_k.$

(1) If $\lim r_i=a,\lim r_j=b,\lim r_k=\infty,$ then
\begin{align*}
\lim \cosh l_i&=\lim
[2\rho_\delta^2(\frac{\phi_i}2)\tau_{-\delta}(b)\frac12e^{r_k}-\frac12e^{r_k-b}]\\
&=\lim
\frac12e^{r_k}[2\rho_\delta^2(\frac{\phi_i}2)\tau_{-\delta}(b)-1]
\end{align*}
Since $\cosh l_i>0,$ then
$2\rho_\delta^2(\frac{\phi_i}2)\tau_{-\delta}(b)-1>0.$ When $\lim
r_k=\infty$, we have $\lim \cosh l_i=\infty.$

By symmetry $\lim \cosh l_j=\infty.$ Furthermore $\lim \cosh l_k$ is
finite. Therefore $\lim l_i=\lim l_j=\infty,$ $\lim l_k$ is finite.
Hence
$$\lim\cosh \theta_k=\lim\frac{\cosh l_k}{\sinh
l_i\sinh l_j}+1=1.$$ Therefore $\lim \theta_k=0.$

(2) If $\lim r_i=c,\lim r_j=\infty,\lim r_k=\infty,$ then
\begin{align*} \lim\cosh
l_i&=\lim [2\rho_\delta^2(\frac{\phi_i}2)\frac14e^{r_j+r_k}-\frac12(e^{r_j-r_k}+e^{r_k-r_j})]\\
&=\lim \frac12e^{r_j+r_k}(\rho_\delta^2(\frac{\phi_i}2)-e^{-2r_k}-e^{-2r_j})\\
&=\lim \frac12e^{r_j+r_k}\rho_\delta^2(\frac{\phi_i}2)\\
&=\infty,\\
\lim\cosh l_j&=\lim[2\rho_\delta^2(\frac{\phi_j}2)\tau_{-\delta}(c)\frac12e^{r_k}-\frac12e^{r_k-c}]\\
&=\lim
e^{r_k}(\rho_\delta^2(\frac{\phi_j}2)\tau_{-\delta}(c)-\frac12e^{-c})\\
&=\infty.
\end{align*}
Here we use the same argument in (1).

By the same calculation of $\lim\cosh l_j$, we see that $\lim\cosh
l_k=\lim e^{r_j}c_k=\infty$ for some constant $c_k.$ Hence
\begin{align*}
\lim\cosh \theta_k
&=\lim\frac{\cosh l_k}{\sinh l_i\sinh l_j}+1\\
&=\lim\frac{\cosh l_k}{\cosh l_i\cosh l_j}+1\\
&=\lim \frac{e^{r_j}c_k}{e^{r_k}c_je^{r_j+r_k}c_i}+1\\
&=1.
\end{align*} Therefore $\lim
\theta_k=0.$

(3) If $\lim r_i=\infty,\lim r_j=\infty,\lim r_k=\infty,$ by the
same calculation of $\lim\cosh l_i$ in (2), we see that $$\lim
\cosh l_k=\lim e^{r_i+r_j}a_k, \lim \cosh l_j=\lim e^{r_k+r_i}a_j,
\lim \cosh l_i=\lim e^{r_j+r_k}a_i$$ for some constants
$a_i,a_j,a_k.$ Hence
\begin{align*}
\lim\cosh \theta_k
&=\lim\frac{\cosh l_k}{\sinh l_i\sinh l_j}+1\\
&=\lim\frac{\cosh l_k}{\cosh l_i\cosh l_j}+1\\
&=\lim \frac{e^{r_i+r_j}a_k}{e^{r_k+r_i}a_je^{r_j+r_k}a_i}+1\\
&=1.
\end{align*}
 Therefore $\lim \theta_k=0.$
\end{proof}

To show that $X$ is closed in $\mathbb{R}^V_{>0},$ take a sequence
of radius $r^{(m)}$ in $\mathcal{N}_{\Phi,-1}$ such that
$\lim_{m\to\infty} \widetilde{K}^{(m)}\in\mathbb{R}^V_{>0}.$ To
prove the closeness, it is sufficient to show that there is a
subsequence, say $r^{(m)}$, so that $\lim_{m\to\infty} r^{(m)}$ is
in $\mathcal{N}_{-1,\delta}(\Phi)$.

Suppose otherwise, there is a subsequence, say $r^{(m)}$ so that
$\lim_{m\to\infty} r^{(m)}$ is in the boundary of
$\mathcal{N}_{-1,\delta}(\Phi)$. For $\delta=\pm 1,$ there are two
possibilities that either for some $v \in V$, $\lim_{m\to\infty}
r^{(m)}(v) =\infty$ or there is an edge $e$ such that
$l^{(m)}_e=0.$ For $\delta=0,$ although $r^{(m)}$'s are allowed to
be negative, the limit of $r^{(m)}$ can not be $-\infty$ since
when $\phi_k$ is given, the condition in Lemma \ref{thm:realize}
(3)
$$\phi_k>\exp(r_i^{(m)})+\exp(r_j^{(m)})$$
implies that $r_i^{(m)}$ is bounded away from $-\infty.$ Therefore
there are only those two possibilities as in case $\delta=\pm 1.$

In the first possibility, by Lemma \ref{thm:infinity-packing} we
see each generalized angle incident to the vertex $v$ converges to
$0$. Hence $\lim_{m\to\infty} \widetilde{K}^{(m)}(v)=0.$ This
contradicts the assumption that $\lim_{m\to\infty}
\widetilde{K}^{(m)}\in\mathbb{R}^V_{>0}.$

In the second possibility, in a hyperbolic right-angled hexagon
with lengths $l_i, l_j, l_k$ and opposite generalized angle
$\theta_i, \theta_j, \theta_k$, by cosine law we see that
\begin{align*}
\cosh\theta_j &= \frac{ \cosh l_j +
\cosh l_i\cosh l_k}{\sinh l_i \sinh l_k}\\
& >  \frac{ \cosh l_i
\cosh l_k}{\sinh l_i \sinh l_k }\\
& \geq  \frac{ \cosh l_i }{\sinh l_i }.
\end{align*}
Hence we have $\lim_{l_i \to 0} \theta_j =\infty.$ Hence the
generalized discrete curvature containing $\theta_j$ converges to
$\infty.$ This contradicts the assumption that $\lim_{m\to\infty}
\widetilde{K}^{(m)}\in\mathbb{R}^V_{>0}.$

\section{A proof of Theorem \ref{thm:generalized-circle-pattern}}

The proof is based on constructing an strictly concave energy
function on the space of all generated hyperbolic triangles of
type $(\varepsilon,\varepsilon,\delta)$ so that its gradient is
the generalized angles. Then using Lemma \ref{thm:convex} on
injectivity of gradient, we establish Theorem
\ref{thm:generalized-circle-pattern}.

\subsection{An energy functional on the space of triangles}

Fix a type $(\varepsilon, \varepsilon, \delta)\in\{-1,0,1\}^3.$
Consider $(\varepsilon, \varepsilon, \delta)$ type generalized
hyperbolic triangles whose edge length are $l_1,l_2,l_3$ and
opposite angles $\theta_1,\theta_2,\theta_3$ so that the angle
$\theta_i$ faces $l_i$ and the type of $\theta_3$ angle is
$\delta.$ For a fixed angle $\theta_3$, all values of the two edge
lengthes $(l_1,l_2)$ form the set
$\mathcal{D}_{\varepsilon,\delta}(\theta_3)$. For the definition
of $\mathcal{D}_{\varepsilon,\delta}(\theta_3)$, see \S 3.

For $h\in\mathbb{R},$ make a change of variables $(l_1,l_2)$ to
$(w_1,w_2)$ and $(\theta_1,\theta_2)$ to $(a_1,a_2)$ as follows.
Let $i=1,2,$
\begin{align}\label{fml:pattern u}
a_i=\int_1^{\theta_i}\rho_\varepsilon^h(t)dt
\end{align}
where $\rho_\varepsilon(t)=\int_0^t\cos(\sqrt{\varepsilon} x)dx$
and
\begin{align}\label{fml:pattern w}
w_i=\int_1^{l_i}\tau_{\varepsilon\delta}^{h-1}(t)dt
\end{align}
where
$\tau_{\varepsilon\delta}(t)=\frac12e^t-\frac12\varepsilon\delta
e^{-t}$ as introduced in \S 3. By the construction, the maps
$(l_1,l_2)$ to $w=(w_1,w_2)$ and $(\theta_1,\theta_2)$ to
$a=(a_1,a_2)$ are diffeomorphisms. Thus the cosine law relating
$l$ to $\theta$ can be considered, with $\theta_3$ fixed, as a
smooth map $a=a(w)$ defined on
$w(\mathcal{D}_{\varepsilon,\delta}(\theta_3))$

\begin{lemma}\label{thm:closed-form-pattern} Under the above
assumption, for a fixed angle $\theta_3,$ and any
$h\in\mathbb{R},$ the differential 1-form $a_1dw_2+a_2dw_1$ is
closed in $w(\mathcal{D}_{\varepsilon,\delta}(\theta_3)).$
Furthermore, the integration
$$F_{\theta_3,h}(w_1,w_2)=\int_{(1,1)}^{(w_1,w_2)}(a_1dw_2+a_2dw_1)$$ is strictly
concave down in $w(\mathcal{D}_{\varepsilon,\delta}(\theta_3))$.
In particular
\begin{align}\label{fml:pattern derivative}
\frac{\partial F_{\theta_3,h}}{\partial
w_i}=a_j=\int_1^{\theta_j}\rho_\varepsilon^h(t)dt
\end{align} for $\{i,j\}=\{1,2\}.$
\end{lemma}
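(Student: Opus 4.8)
The plan is to push everything through the derivative cosine law (Lemma~\ref{thm:derivative-cosine}), restricted to the two-parameter slice of $(\varepsilon,\varepsilon,\delta)$-triangles on which $\theta_3$ is frozen. Since $a_i$ is a function of $\theta_i$ alone and $w_j$ of $l_j$ alone, and since $\rho_\varepsilon(\theta_i)>0$ and $\tau_{\varepsilon\delta}(l_j)>0$ on the admissible ranges (so the substitutions $(\theta_1,\theta_2)\mapsto(a_1,a_2)$ and $(l_1,l_2)\mapsto(w_1,w_2)$ are diffeomorphisms and the stated powers make sense), the chain rule gives
\[
\frac{\partial a_i}{\partial w_j}=\rho_\varepsilon^h(\theta_i)\,\Bigl(\frac{\partial\theta_i}{\partial l_j}\Bigr)_{\!\theta_3}\,\tau_{\varepsilon\delta}^{\,1-h}(l_j),\qquad i,j\in\{1,2\},
\]
where $(\,\cdot\,)_{\theta_3}$ denotes differentiation within the $\theta_3$-fixed slice. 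So the whole lemma is controlled by the $2\times2$ matrix $B:=\bigl((\partial\theta_i/\partial l_j)_{\theta_3}\bigr)_{i,j=1,2}$.

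First I would compute $B$. Setting $d\theta_3=0$ in the third row of (\ref{fml:derivative2}) gives $dl_3=\rho'_\varepsilon(\theta_2)\,dl_1+\rho'_\varepsilon(\theta_1)\,dl_2$, just as in the proof of Lemma~\ref{thm:symmetry}; substituting this into the first two rows and using the identity $\rho_\varepsilon'^2(\theta)+\varepsilon\rho_\varepsilon^2(\theta)=1$ yields
\[
B=\frac1{\sqrt{-\det G_\theta}}\begin{pmatrix}\varepsilon\rho_\varepsilon(\theta_1)\rho_\varepsilon^2(\theta_2)&-\rho_\varepsilon(\theta_1)C\\ -\rho_\varepsilon(\theta_2)C&\varepsilon\rho_\varepsilon(\theta_2)\rho_\varepsilon^2(\theta_1)\end{pmatrix},\qquad C:=\rho'_\delta(\theta_3)+\rho'_\varepsilon(\theta_1)\rho'_\varepsilon(\theta_2).
\]
The cosine law (\ref{fml:cosine1}) with $i=3$ reads $C=\tau'_{\varepsilon^2}(l_3)\,\rho_\varepsilon(\theta_1)\rho_\varepsilon(\theta_2)$, which is positive since $\tau'_s(l)=\frac12e^l+\frac12se^{-l}>0$ for $s\in\{0,1\}$; and Lemma~\ref{thm:determinant} gives $-\det G_\theta=(\rho_\varepsilon(\theta_1)\rho_\varepsilon(\theta_2)\tau_{\varepsilon^2}(l_3))^2$. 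Feeding these two facts together with $\tau_s'^2(l)-s=\tau_s^2(l)$ at $s=\varepsilon^2$ into the determinant, it collapses to $\det B=-\rho_\varepsilon(\theta_1)\rho_\varepsilon(\theta_2)<0$.

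Closedness of $\omega=a_2\,dw_1+a_1\,dw_2$ is the identity $\partial a_1/\partial w_1=\partial a_2/\partial w_2$; by the displayed formula and the diagonal entries of $B$ the quotient of these equals $\bigl(\rho_\varepsilon(\theta_1)\tau_{\varepsilon\delta}(l_2)/(\rho_\varepsilon(\theta_2)\tau_{\varepsilon\delta}(l_1))\bigr)^{h-1}$, which is $1$ for every $h$ by the sine law (\ref{fml:sine}). Hence $\omega$ is closed. Moreover $w(\mathcal{D}_{\varepsilon,\delta}(\theta_3))$ is simply connected --- for $\varepsilon\in\{0,1\}$ it is a product of intervals, and for $\varepsilon=-1$ it is diffeomorphic to a convex region via the substitutions used in the proof of Lemma~\ref{thm:moduli} --- so $\omega$ has a smooth primitive $F_{\theta_3,h}$, unique up to an additive constant, satisfying $\partial F_{\theta_3,h}/\partial w_i=a_j$ for $\{i,j\}=\{1,2\}$, which is (\ref{fml:pattern derivative}).

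For concavity, observe that the Hessian of $F_{\theta_3,h}$ is the matrix obtained from $D_\theta B D_l$ by interchanging its two rows, where $D_\theta=\mathrm{diag}(\rho_\varepsilon^h(\theta_1),\rho_\varepsilon^h(\theta_2))$ and $D_l=\mathrm{diag}(\tau_{\varepsilon\delta}^{1-h}(l_1),\tau_{\varepsilon\delta}^{1-h}(l_2))$; it is symmetric by closedness. Its upper-left entry equals $\partial a_2/\partial w_1=-\rho_\varepsilon^h(\theta_2)\tau_{\varepsilon\delta}^{1-h}(l_1)\rho_\varepsilon(\theta_2)C/\sqrt{-\det G_\theta}<0$, and its determinant is $-\det D_\theta\,\det D_l\,\det B>0$ because $\det B<0$ and the diagonal factors are positive; hence the Hessian is negative definite at every point and $F_{\theta_3,h}$ is strictly concave down. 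The one step requiring genuine care is the computation of $B$: specializing the $3\times3$ derivative cosine law to the $\theta_3$-fixed slice and then tracking the signs and the powers of $h$. The sine law is exactly what annihilates the $h$-dependence in the symmetry check, while Lemma~\ref{thm:determinant} and the identity $\tau_s'^2-s=\tau_s^2$ are what make $\det B$ collapse; the leftover case-by-case positivity statements ($\rho_\varepsilon(\theta_i)>0$, $\tau_{\varepsilon\delta}(l_j)>0$, $\tau_{\varepsilon^2}(l_3)\neq0$ on the admissible regions) are routine.
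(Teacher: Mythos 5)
Your argument is correct, and it reaches the lemma by running the derivative cosine law in the opposite direction from the paper. The paper's proof restricts (\ref{fml:derivative1}) (lengths as functions of angles) to the slice $d\theta_3=0$: the relevant $2\times 2$ matrix is then just a block of $MG_l$, no elimination is needed, symmetry (hence closedness) follows from the sine law (\ref{fml:sine}) exactly as in your check, and concavity is obtained by showing the \emph{inverse} of the Hessian is definite, which reduces to positive definiteness of the single matrix with diagonal $\tau'_{\varepsilon\varepsilon}(l_3)$ and off-diagonal $\varepsilon$, needing only $\tau'_{\varepsilon\varepsilon}(l_3)>0$ and $\tau'^2_{\varepsilon\varepsilon}(l_3)-\varepsilon^2=\tau^2_{\varepsilon\varepsilon}(l_3)>0$; in particular it never uses Lemma \ref{thm:determinant} or the explicit slice Jacobian. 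You instead start from (\ref{fml:derivative2}), eliminate $dl_3$ via the third row (the same step the paper uses in the proof of Lemma \ref{thm:symmetry}, but in the packing section, not here), compute the matrix $B=(\partial\theta_i/\partial l_j)_{\theta_3}$ explicitly, and then read off the Hessian directly, using the cosine law (\ref{fml:cosine1}) for $C>0$, Lemma \ref{thm:determinant} for $-\det G_\theta$, and $\tau'^2_s-s=\tau^2_s$ to collapse $\det B$ to $-\rho_\varepsilon(\theta_1)\rho_\varepsilon(\theta_2)$. What your route buys is the Hessian itself in closed form (and the clean identity $\det B=-\rho_\varepsilon(\theta_1)\rho_\varepsilon(\theta_2)$), plus an explicit justification that $w(\mathcal{D}_{\varepsilon,\delta}(\theta_3))$ is simply connected, a point the paper leaves implicit; what the paper's route buys is brevity, since working with $\partial w/\partial a$ avoids both the elimination and the determinant of the angle Gram matrix. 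One small presentational point: in the closedness step you verify $\partial a_1/\partial w_1=\partial a_2/\partial w_2$ by taking their quotient, but when $\varepsilon=0$ both diagonal entries of $B$ vanish and the quotient is $0/0$; the identity still holds trivially there, so either treat $\varepsilon=0$ separately or phrase the sine-law cancellation as a product identity rather than a ratio.
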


\begin{proof} If $\theta_3$ is fixed, then $d\theta_3=0.$ By the derivative cosine law (\ref{fml:derivative1}) we have
\begin{align*}
\left(
\begin{array}{ccc}
dl_1 \\
dl_2
\end{array}\right)
&=\frac{-1}{\sqrt{-\det G_l}} \left(
\begin{array}{ccc}
\tau_{\varepsilon\delta}(l_1)&0 \\
0&\tau_{\varepsilon\delta}(l_2)
\end{array}
\right) \left(
\begin{array}{ccc}
\varepsilon&\tau_{\varepsilon\varepsilon}'(l_3) \\
\tau_{\varepsilon\varepsilon}'(l_3)&\varepsilon
\end{array}
\right) \left(
\begin{array}{ccc}
d\theta_1 \\
d\theta_2
\end{array}\right)\\
&=\frac{-1}{\sqrt{-\det G_l}} \left(
\begin{array}{ccc}
\tau_{\varepsilon\delta}(l_1)&0 \\
0&\tau_{\varepsilon\delta}(l_2)
\end{array}
\right) \left(
\begin{array}{ccc}
\tau_{\varepsilon\varepsilon}'(l_3)&\varepsilon \\
\varepsilon&\tau_{\varepsilon\varepsilon}'(l_3)
\end{array}
\right) \left(
\begin{array}{ccc}
d\theta_2 \\
d\theta_1
\end{array}\right).
\end{align*}
Since $dw_i=\tau_{\varepsilon\delta}^{h-1}(l_i)dl_i,
da_i=\rho_\varepsilon^h(\theta_i)d\theta_i,$ for $i=1,2,$ then
\begin{multline*}
\left(
\begin{array}{ccc}
dw_1 \\
dw_2
\end{array}\right)=\frac{-1}{\sqrt{-\det G_l}}
\left(
\begin{array}{ccc}
\tau_{\varepsilon\delta}^h(l_1)&0 \\
0&\tau_{\varepsilon\delta}^h(l_2)
\end{array}
\right) \left(
\begin{array}{ccc}
\tau_{\varepsilon\varepsilon}'(l_3)&\varepsilon \\
\varepsilon&\tau_{\varepsilon\varepsilon}'(l_3)
\end{array}
\right)\\ \left(
\begin{array}{ccc}
\rho_\varepsilon^{-h}(\theta_2)&0 \\
0&\rho_\varepsilon^{-h}(\theta_1)
\end{array}
\right)\left(
\begin{array}{ccc}
da_2 \\
da_1
\end{array}\right)
\end{multline*}
$$=:\frac{-1}{\sqrt{-\det G_l}}A\left(
\begin{array}{ccc}
da_2 \\
da_1
\end{array}\right).$$

Since
$A_{12}=\tau_{\varepsilon\delta}^h(l_1)\varepsilon\rho_\varepsilon^{-h}(\theta_1)
=\tau_{\varepsilon\delta}^h(l_2)\varepsilon\rho_\varepsilon^{-h}(\theta_2)=A_{21}$
by the sine law (\ref{fml:sine}), the matrix $A$ is symmetric.
Thus the differential 1-form $a_1dw_2+a_2dw_1$ is closed.
Therefore the function $F_{\theta_3,h}(w_1,w_2)$ is well defined.

The above calculation shows that the Hessian  of
$F_{\theta_3,h}(w_1,w_2)$ is the matrix $-\sqrt{-\det G_l}A^{-1}.$
To show the function $F_{\theta_3,h}(w_1,w_2)$ is strictly concave
down, we need to check that $A^{-1}$ is positive definite. It is
equivalent to show that $A$ is positive definite. By forgetting
the two diagonal matrices, it is enough to show
$$B=\left(
\begin{array}{ccc}
\tau_{\varepsilon\varepsilon}'(l_3)&\varepsilon \\
\varepsilon&\tau_{\varepsilon\varepsilon}'(l_3)
\end{array}
\right)$$ is positive definite. Since
$\tau_{\varepsilon\varepsilon}'(l_3) =1/2(e^{l_3} + \epsilon^2
e^{-l_3})>0$ and
$$\det
B=\tau_{\varepsilon\varepsilon}'^2(l_3)-\varepsilon^2=(\frac12e^{l_3}+\frac12\varepsilon\varepsilon
e^{-l_3})^2-\varepsilon^2=(\frac12e^{l_3}-\frac12\varepsilon\varepsilon
e^{-l_3})^2>0,$$ the matrix $B$ is positive definite.
\end{proof}

\subsection{A proof of Theorem \ref{thm:generalized-circle-pattern}}

Now the proof of Theorem \ref{thm:generalized-circle-pattern}
follows from the routine variational framework. Let us recall the
set up in $\S$1.5. Suppose $(\Sigma,G)$ is a cell decomposed
surface so that the sets of all vertices, edges and 2-cells are
$V,E,F$ respectively. The dual decomposition is $G^*$ with
vertices $V^*(\cong F)$. Elements in $V^*$ are denoted by $f^*$
where $f\in F.$ A quadrilateral $(v,v',f^*,f'^*)\in V\times
V\times V^* \times V^*$ in $\Sigma$ satisfies $vv'\in E, f>vv'$
and $f'>vv'.$

Now fix a type $(\varepsilon, \varepsilon, \delta)$ and a function
$\theta: E\to \mathring{I}_\delta$. The set of all circle pattern
metrics is
$\mathcal{E}_{\varepsilon,\delta}(\theta)=\{r\in(J_{\varepsilon\delta})^{V^*}|(r_i,r_j)\in\mathcal{D}_{\varepsilon,\delta}(\theta(v_iv_j))$
whenever $f_i,f_j$ share an edge\}. For any circle pattern metric
$r\in\mathcal{E}_{\varepsilon,\delta}(\theta),$ and a
quadrilateral $(v,v',f^*,f'^*)\in V\times V\times V^* \times V^*$
where $vv'\in E, f>vv'$ and $f'>vv',$ construct a type
$(\varepsilon, \varepsilon, \delta)$ generalized hyperbolic
triangle $\triangle f^*f'^*v$ so that the length of $f^*v$ and
$f'^*v$ are $r(f^*),r(f'^*),$ and the generalized angle at $v$ is
$\theta(vv')$ of type $\delta.$ Realize the quadrilateral
$(v,v',f^*,f'^*)$ geometrically as the metric double of $\triangle
f^*vf'^*$ along the edge $f^*f'^*.$ Now isometrically glue all
these geometric quadrilateral along edges. The result is a
polyhedral surface. Recall that for $h\in\mathbb{R},$ the
$K_h-$curvature of $r$
$$K_h:V^*\to\mathbb{R}$$ is defined by (\ref{fml:K_h}).

For $h\in\mathbb{R},$ make a change of parameter from $r\in
\mathcal{E}_{\varepsilon,\delta}(\theta)$ to $w=w(r)\in
w(\mathcal{E}_{\varepsilon,\delta}(\theta))$ so that
$w(r)(x)=\int_1^{r(x)}\tau_{\varepsilon\delta}^{h-1}(s)ds$ is
given by (\ref{fml:pattern w}). We now use Lemma
\ref{thm:closed-form-pattern} to construct a smooth strictly
concave function
$W:w(\mathcal{E}_{\varepsilon,\delta}(\theta))\to\mathbb{R}$ so
that $\nabla W|_w$ is the generalized curvature $K_h$ of $r$ where
$w=w(r).$ Then using Lemma \ref{thm:convex}, we see that Theorem
\ref{thm:generalized-circle-pattern} follows.

Here is the construction. For $w=w(r)\in
w(\mathcal{E}_{\varepsilon,\delta}(\theta))$ and each
quadrilateral $(v,v',f^*,f'^*)$ in $\Sigma$, we define the
F-energy of it in $r$ metric to be
$$F_{\theta(vv'),h}(w(r(f^*)),w(r(f'^*)))$$ where $F_{\theta,h}$ is
given by Lemma \ref{thm:closed-form-pattern}. The function
$W:w=w(r)\in w(\mathcal{E}_{\varepsilon,\delta}(\theta))\to
\mathbb{R}$ is the sum of F-energies of all quadrilateral
$(v,v',f^*,f'^*)$ in $r$ metric. By the construction, $W$ is
smooth and strictly concave. By (\ref{fml:pattern derivative}) we
have $$\nabla W|_w=K_h|_r$$ where $w=w(r).$ This ends the proof.

\newpage

\section*{Appendix A. Formulas of cosine and sine laws}

\parpic(0cm,0cm)(0cm,4cm)[r]{\includegraphics[scale=.53]{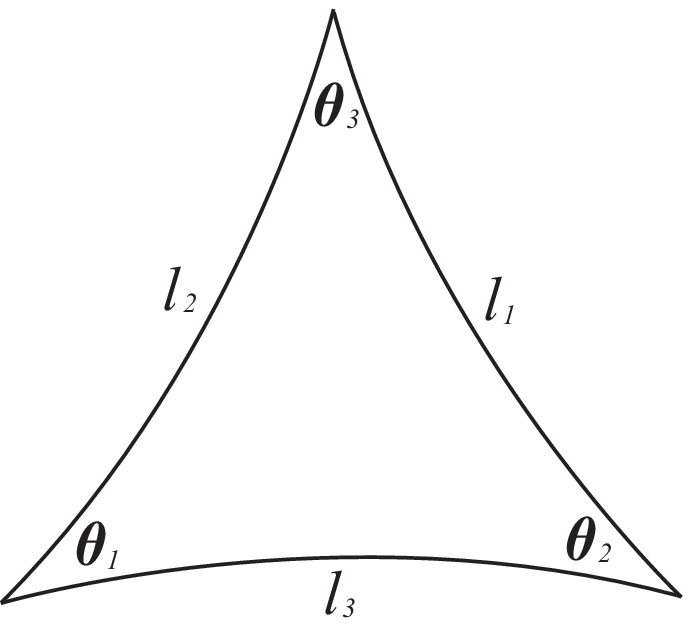}}
\begin{flalign*}
&\text{For}\ \{i,j,k\}=\{1,2,3\}, &\\
&\cosh l_i=\frac{\cos \theta_i+\cos \theta_j\cos
\theta_k}{\sin\theta_j\sin \theta_k}&\\
&\cos \theta_i=\frac{-\cosh l_i +\cosh l_j\cosh l_k}{\sinh
l_j\sinh
l_k}&\\
&\frac{\sin \theta_1}{\sinh l_1}=\frac{\sin \theta_2}{\sinh
l_2}=\frac{\sin \theta_3}{\sinh l_3}&
\end{flalign*}
\begin{picture}(4,2.5)
\put(0,0){\line(1,0){360}}
\end{picture}

\parpic(0cm,0cm)(0cm,4cm)[r]{\includegraphics[scale=.53]{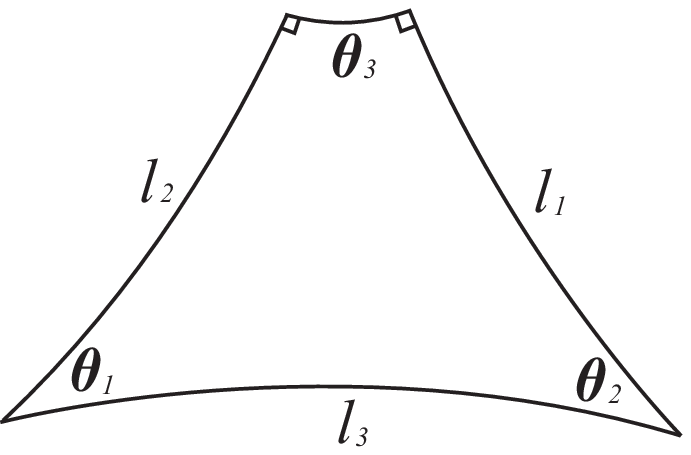}}
\begin{flalign*}
&\text{For}\ \{i,j\}=\{1,2\},&\\
&\sinh l_i=\frac{\cos \theta_i+\cos \theta_j\cosh
\theta_3}{\sin\theta_j\sinh \theta_3}&\\
&\cosh l_3=\frac{\cosh \theta_3+\cos \theta_1\cos \theta_2}{\sin
\theta_1\sin \theta_2}&\\
&\cos \theta_i=\frac{-\sinh l_i +\sinh l_j\cosh l_3}{\cosh
l_j\sinh
l_3}&\\
&\cosh \theta_3=\frac{\cosh l_3 +\sinh l_1\sinh l_2}{\cosh
l_1\cosh
l_2}&\\
&\frac{\sin \theta_1}{\cosh l_1}=\frac{\sin \theta_2}{\cosh
l_2}=\frac{\sinh \theta_3}{\sinh l_3}&
\end{flalign*}
\begin{picture}(4,2.5)
\put(0,0){\line(1,0){360}}
\end{picture}

\parpic(0cm,0cm)(-.8cm,5cm)[r]{\includegraphics[scale=.53]{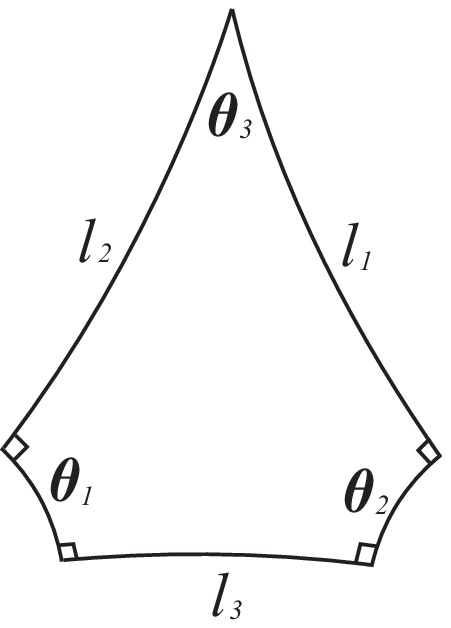}}
\begin{flalign*}
&\text{For}\ \{i,j\}=\{1,2\},&\\
&\sinh l_i=\frac{\cosh \theta_i+\cosh \theta_j\cos \theta_3}{\sinh
\theta_j\sin \theta_3}&\\
&\cosh l_3=\frac{\cos \theta_3+\cosh \theta_1\cosh \theta_2}{\sinh
\theta_1\sinh \theta_2}&\\
&\cos \theta_i=\frac{\sinh l_i +\sinh l_j\cosh l_3}{\cosh l_j\sinh
l_3}&\\
&\cos \theta_3=\frac{-\cosh l_3 +\sinh l_1\sinh l_2}{\cosh
l_1\cosh
l_2}&\\
&\frac{\sinh \theta_1}{\cosh l_1}=\frac{\sinh \theta_2}{\cosh
l_2}=\frac{\sin \theta_3}{\sinh l_3}&
\end{flalign*}
\begin{picture}(4,2.5)
\put(0,0){\line(1,0){360}}
\end{picture}

\parpic(0cm,0cm)(-.8cm,3.8cm)[r]{\includegraphics[scale=.53]{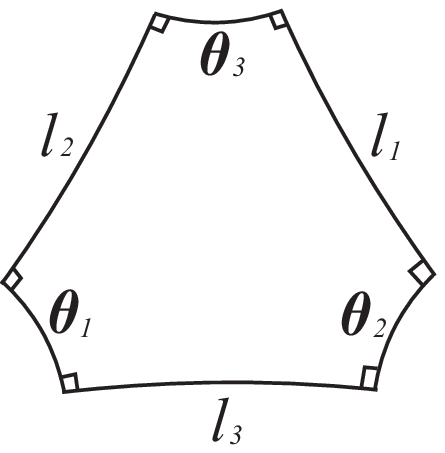}}
\begin{flalign*}
&\text{For}\ \{i,j,k\}=\{1,2,3\},&\\
&\cosh l_i=\frac{\cosh \theta_i+\cosh \theta_j\cosh
\theta_k}{\sinh
\theta_j\sinh \theta_k}&\\
&\cosh \theta_i=\frac{\cosh l_i +\cosh l_j\cosh l_k}{\sinh
l_j\sinh
l_k}&\\
&\frac{\sinh \theta_1}{\sinh l_1}=\frac{\sinh \theta_2}{\sinh
l_2}=\frac{\sinh \theta_3}{\sinh l_3}&
\end{flalign*}
\begin{picture}(4,2.5)
\put(0,0){\line(1,0){360}}
\end{picture}

\parpic(0cm,0cm)(0cm,5cm)[r]{\includegraphics[scale=.53]{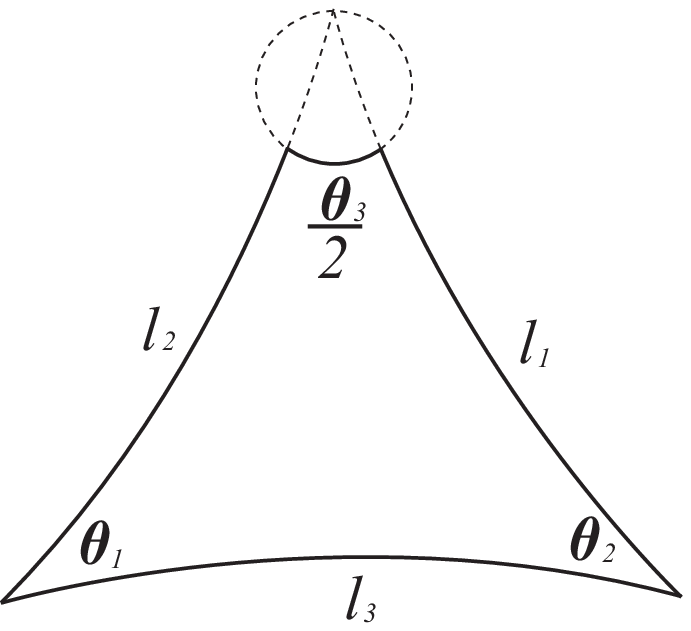}}
\begin{flalign*}
&\text{For}\ \{i,j\}=\{1,2\},&\\
&\frac{e^{l_i}}2=\frac{\cos\theta_i+\cos\theta_j}{\theta_3\sin\theta_j}&\\
&\cosh l_3=\frac{1+\cos \theta_1\cos \theta_2}{\sin \theta_1\sin
\theta_2}&\\
&\cos \theta_i=\frac{-e^{l_i} +e^{l_j}\cosh l_3}{e^{l_j}\sinh l_3}&\\
&\frac{\theta_3^2}{2}=\frac{\cosh l_3 -\cosh
(l_1-l_2)}{\frac{e^{l_1+l_2}}4}&\\
&\frac{\sin \theta_1}{\frac{e^ {l_1}}2}=\frac{\sin
\theta_2}{\frac{e^ {l_2}}2}=\frac{\theta_3}{\sinh l_3}&
\end{flalign*}
\begin{picture}(4,2.5)
\put(0,0){\line(1,0){360}}
\end{picture}

\parpic(0cm,0cm)(-.8cm,5cm)[r]{\includegraphics[scale=.53]{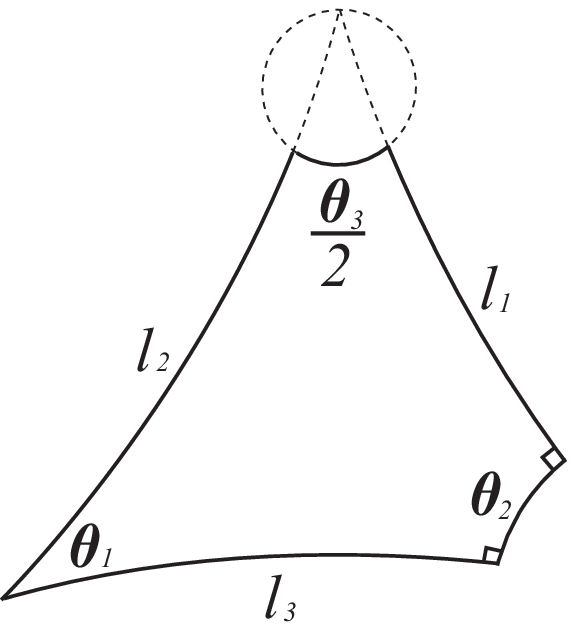}}
\begin{flalign*}
&\frac{e^{l_1}}2=\frac{\cos\theta_1+\cosh\theta_2}{\theta_3\sinh\theta_2}&\\
&\frac{e^{l_2}}2=\frac{\cosh\theta_2+\cos\theta_1}{\theta_3\sin\theta_1}&\\
&\sinh l_3=\frac{1+\cos \theta_1\cosh \theta_2}{\sin \theta_1\sinh
\theta_2}&\\
&\cos \theta_1=\frac{-e^{l_1} +e^{l_2}\sinh l_3}{e^{l_2}\cosh l_3}&\\
&\cosh \theta_2=\frac{e^{l_2} +e^{l_1}\sinh l_3}{e^{l_1}\cosh l_3}&\\
&\frac{\theta_3^2}{2}=\frac{\sinh l_3 +\sinh
(l_2-l_1)}{\frac{e^{l_1+l_2}}4}&\\
&\frac{\sin \theta_1}{\frac{e^ {l_1}}2}=\frac{\sinh
\theta_2}{\frac{e^ {l_2}}2}=\frac{\theta_3}{\cosh l_3}&
\end{flalign*}
\begin{picture}(4,2.5)
\put(0,0){\line(1,0){360}}
\end{picture}

\parpic(0cm,0cm)(-.8cm,5cm)[r]{\includegraphics[scale=.53]{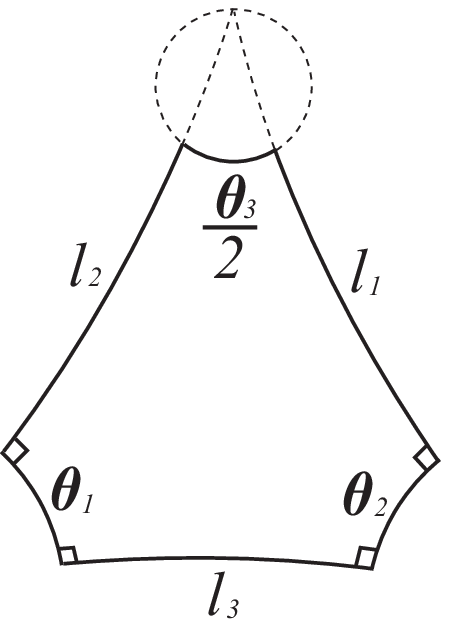}}
\begin{flalign*}
&\text{For}\ \{i,j\}=\{1,2\},&\\
&\frac{e^{l_i}}2=\frac{\cosh\theta_i+\cosh\theta_j}{\theta_3\sinh\theta_j}&\\
&\cosh l_3=\frac{1+\cosh \theta_1\cosh \theta_2}{\sinh
\theta_1\sinh \theta_2}&\\
&\cosh \theta_i=\frac{e^{l_i} +e^{l_j}\cosh l_3}{e^{l_j}\sinh l_3}&\\
&\frac{\theta_3^2}{2}=\frac{\cosh l_3 +\cosh
(l_1-l_2)}{\frac{e^{l_1+l_2}}4}&\\
&\frac{\sinh \theta_1}{\frac{e^ {l_1}}2}=\frac{\sinh
\theta_2}{\frac{e^ {l_2}}2}=\frac{\theta_3}{\sinh l_3}&
\end{flalign*}
\begin{picture}(4,2.5)
\put(0,0){\line(1,0){360}}
\end{picture}

\parpic(0cm,0cm)(0cm,5cm)[r]{\includegraphics[scale=.53]{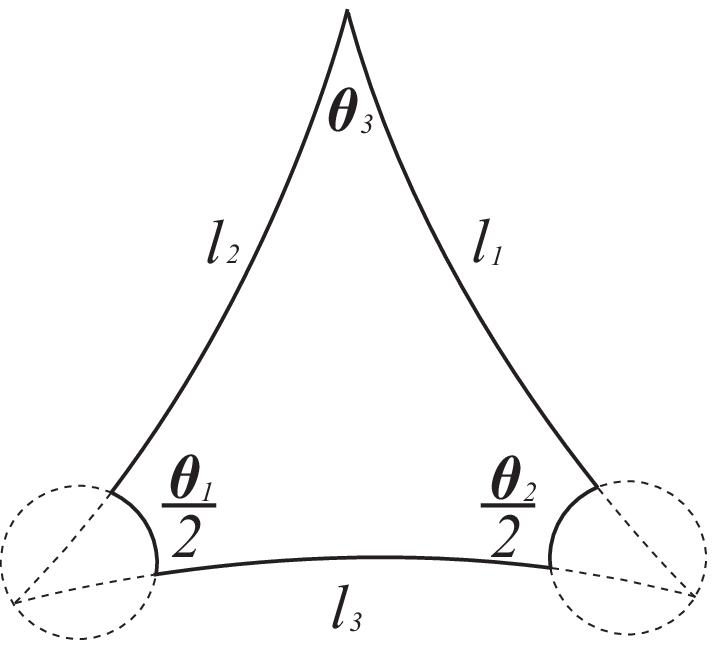}}
\begin{flalign*}
&\text{For}\ \{i,j\}=\{1,2\},&\\
&\frac{e^{l_i}}2=\frac{1+\cos \theta_3}{\theta_j\sin \theta_3}&\\
&\frac{e^{l_3}}2=\frac{1+\cos\theta_3}{\theta_1\theta_2}&\\
&\frac{\theta_i^2}4=\frac{e^{l_i} -e^{l_3-l_j}}{e^{l_j+l_3}}&\\
&\sin^2\frac{\theta_3}{2}=e^{l_3-l_1-l_2}&\\
&\frac{\theta_1}{e^{l_1}}=\frac{\theta_2}{e^{l_2}}=\frac{\sin
\theta_3}{e^{l_3}}&
\end{flalign*}
\begin{picture}(4,2.5)
\put(0,0){\line(1,0){360}}
\end{picture}

\parpic(0cm,0cm)(0cm,4cm)[r]{\includegraphics[scale=.53]{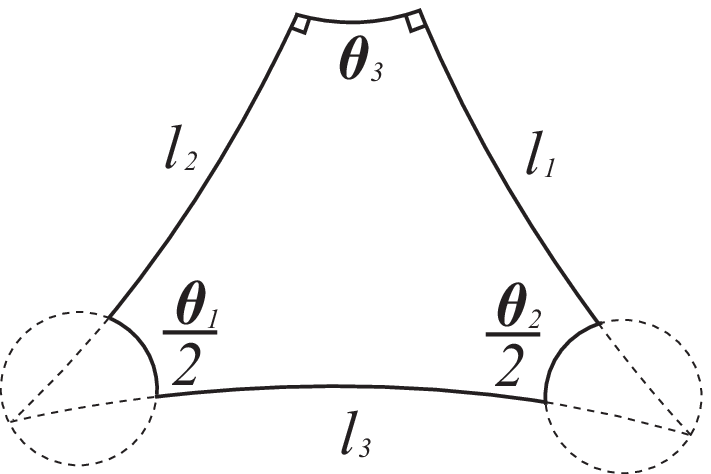}}
\begin{flalign*}
&\text{For}\ \{i,j\}=\{1,2\}, &\\
&\frac{e^{l_i}}2=\frac{1+\cosh \theta_3}{\theta_j\sinh \theta_3} &\\
&\frac{e^{l_3}}2=\frac{1+\cosh\theta_3}{\theta_1\theta_2} &\\
&\frac{\theta_i^2}4=\frac{e^{l_i}+e^{l_3-l_j}}{e^{l_j+l_3}} &\\
&\sinh^2\frac{\theta_3}{2}=e^{l_3-l_1-l_2} &\\
&\frac{\theta_1}{e^{l_1}}=\frac{\theta_2}{e^{l_2}}=\frac{\sinh
\theta_3}{e^{l_3}} &
\end{flalign*}
\begin{picture}(4,2.5)
\put(0,0){\line(1,0){360}}
\end{picture}

\parpic(0cm,0cm)(0cm,4cm)[r]{\includegraphics[scale=.53]{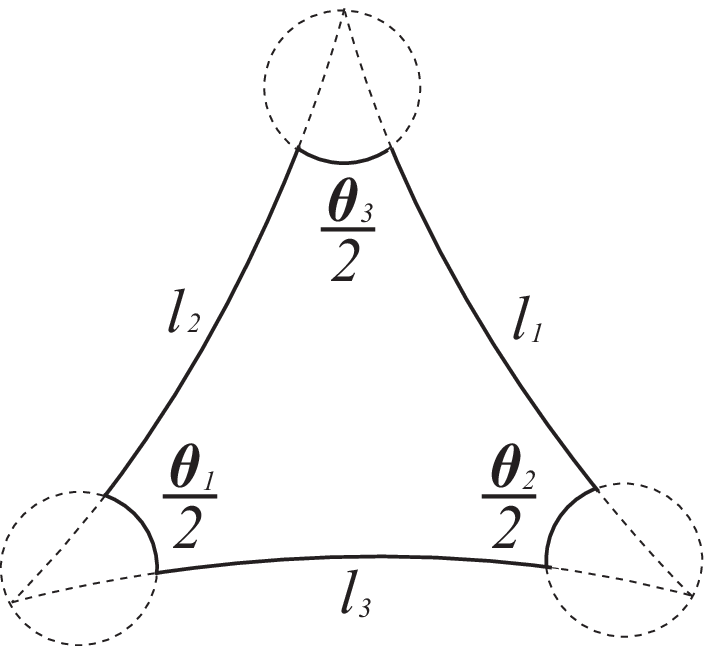}}
\begin{flalign*}
&\text{For}\ \{i,j,k\}=\{1,2,3\}, &\\
&\frac{e^{l_i}}2=\frac2{\theta_j\theta_k}  &\\
&\frac{\theta_i^2}4=e^{l_i-l_j-l_k} &\\
&\frac{\theta_1}{e^{l_1}}=\frac{\theta_2}{e^{l_2}}=\frac{\theta_3}{e^{l_3}}
&
\end{flalign*}
\newpage

\section*{Appendix B. A proof of Lemma \ref{thm:determinant} and Lemma \ref{thm:derivative-cosine}}

For simplicity, we abuse the notation. Let
$$g(\theta_i):=\rho_{\varepsilon_i}(\theta_i),\ \ g'(\theta_i):=\rho'_{\varepsilon_i}(\theta_i),$$
$$f(l_k):=\tau_{\varepsilon_i\varepsilon_j}(l_k),\ \ f'(l_k):=\tau'_{\varepsilon_i\varepsilon_j}(l_k)$$
for $\{i,j,k\}=\{1,2,3\}$, where $g,f$ are not well-defined
functions which depend on the type of generalized vertices.

In this simplified notation we have
\begin{align*}
-\det G_l=&\det \left(
\begin{array}{ccc}
\varepsilon_1&f'(l_3)&f'(l_2) \\
f'(l_3)&\varepsilon_2&f'(l_1) \\
f'(l_2)&f'(l_1)&\varepsilon_3
\end{array}
\right)\\=&\varepsilon_1\varepsilon_2\varepsilon_3+2f'(l_1)f'(l_2)f'(l_3)
-\varepsilon_1f'^2(l_1)-\varepsilon_2f'^2(l_2)-\varepsilon_3f'^2(l_3).
\end{align*}
If $\varepsilon_3=0,$ then $g(\theta_3)=\theta_3, f(l_1)=\frac
{e^{l_1}}2, f(l_2)=\frac {e^{l_2}}2.$ By the cosine law
(\ref{fml:cosine2}), we have
$$\frac{\theta_3^2}2=\frac{f'(l_3)-\frac12\varepsilon_1e^{l_1-l_2}-\frac12\varepsilon_2e^{l_2-l_1}}
{f(l_1)f(l_2)}.$$ Thus negative of the right hand side of
(\ref{fml:det G_l}) is
\begin{align*}
f^2(l_1)f^2(l_2)g^2(\theta_3)
&=f^2(l_1)f^2(l_2)\theta_3^2\\
&=f^2(l_1)f^2(l_2)\frac{2f'(l_3)-\varepsilon_1e^{l_1-l_2}-\varepsilon_2e^{l_2-l_1}}
{f(l_1)f(l_2)}\\
&=f(l_1)f(l_2)(2f'(l_3)-\varepsilon_1e^{l_1-l_2}-\varepsilon_2e^{l_2-l_1})\\
&=\frac {e^{l_1}}2\frac
{e^{l_2}}2(2f'(l_3)-\varepsilon_1e^{l_1-l_2}-\varepsilon_2e^{l_2-l_1})\\
&=2\frac{e^{l_1}}2\frac{e^{l_2}}2f'(l_3)
-\varepsilon_1\frac{e^{2l_1}}4 -\varepsilon_2\frac{e^{2l_2}}4 \\
&=-\det G_l.
\end{align*}

If $\varepsilon_3=\pm1,$ then
$g^2(\theta_3)=\varepsilon_3(1-g'^2(\theta_3)).$ And we have
\begin{align}\label{fml:f}
f^2(l_i)-f'^2(l_i)=(\frac12e^l-\frac12\varepsilon_j\varepsilon_ke^{-l})^2-
(\frac12e^l+\frac12\varepsilon_j\varepsilon_ke^{-l})^2=-\varepsilon_j\varepsilon_k.
\end{align}

Thus negative of the right hand side of (\ref{fml:det G_l}) is
\begin{align*}
f^2(l_1)f^2(l_2)g^2(\theta_3)
&=f^2(l_1)f^2(l_2)(1-g'^2(\theta_3))\\
&=\varepsilon_3(f^2(l_1)f^2(l_2)-(-\varepsilon_3f'(l_3)+f'(l_1)f'(l_2))^2)   \\
&=\varepsilon_3((f'^2(l_1)-\varepsilon_2\varepsilon_3)(f'^2(l_2)-\varepsilon_3\varepsilon_1)
-(-\varepsilon_3f'(l_3)+f'(l_1)f'(l_2))^2)\\
&=\varepsilon_3(\varepsilon_1\varepsilon_2\varepsilon_3^2
+2\varepsilon_3f'(l_1)f'(l_2)f'(l_3)
-\varepsilon_1\varepsilon_3f'^2(l_1)-\varepsilon_2\varepsilon_3f'^2(l_2)-\varepsilon_3^2f'^2(l_3))\\
&=-\det G_l.
\end{align*}
The second equality is due to the cosine law, the third equality
is due to (\ref{fml:f}) and the last equality is due to
$\varepsilon_3=\pm1$.

And we have
\begin{align*} -\det G_\theta=&\det \left(
\begin{array}{ccc}
-1&g'(\theta_3)&g'(\theta_2) \\
g'(\theta_3)&-1&g'(\theta_1) \\
g'(\theta_2)&g'(\theta_1)&-1
\end{array}
\right)\\=&-1+2g'(\theta_1)g'(\theta_2)g'(\theta_3)+g'^2(\theta_1)+g'^2(\theta_2)+g'^2(\theta_3).
\end{align*}

Notice that we have
\begin{align}\label{fml:g}
g'^2(\theta_i)+\varepsilon_ig^2(\theta_i)=1.
\end{align} Thus negative of the right hand side of (\ref{fml:det G_a}) is
\begin{align*}
g^2(\theta_1)g^2(\theta_2)f^2(l_3)&=g^2(\theta_1)g^2(\theta_2)(f'^2(l_3)-\varepsilon_1\varepsilon_2)\\
&=(g'(\theta_3)+g'(\theta_1)g'(\theta_2))^2-g^2(\theta_1)g^2(\theta_2)\varepsilon_1\varepsilon_2\\
&=(g'(\theta_3)+g'(\theta_1)g'(\theta_2))^2-(1-g'^2(\theta_1))(1-g'^2(\theta_2))\\
&=-\det G_\theta.
\end{align*}
The second equality is due to the cosine law and the third
equality is due to (\ref{fml:g}).

We can prove either one of the derivative cosine law
(\ref{fml:derivative1}) and (\ref{fml:derivative2}). The other one
will be a corollary duo to Lemma \ref{thm:inverse matrix}. For
example, we give a proof of (\ref{fml:derivative1}).

By the cosine law (\ref{fml:cosine1}) we have
$$f'(l_i)g(\theta_j)g(\theta_k)=g'(\theta_i)+g'(\theta_j)g'(\theta_k).$$
After differentiating the two sides we have
\begin{multline*}
f''(l_i)g(\theta_j)g(\theta_k)dl_i+f'(l_i)g'(\theta_j)g(\theta_k)d\theta_j+f'(l_i)g(\theta_j)g'(\theta_k)d\theta_k\\
=g''(\theta_i)d\theta_i+g''(\theta_j)g'(\theta_k)d\theta_j+g'(\theta_j)g''(\theta_k)d\theta_k
\end{multline*} which is equivalent to
\begin{multline}\label{fml:appendix B1}
f''(l_i)g(\theta_j)g(\theta_k)dl_i\\
=g''(\theta_i)d\theta_i+[g''(\theta_j)g'(\theta_k)-f'(l_i)g'(\theta_j)g(\theta_k)]d\theta_j\\
+[g'(\theta_j)g''(\theta_k)-f'(l_i)g(\theta_j)g'(\theta_k)]d\theta_k.
\end{multline}
By the cosine law (\ref{fml:cosine1}), the coefficient of
$d\theta_j$ in (\ref{fml:appendix B1}) is
\begin{align}
&g''(\theta_j)g'(\theta_k)-f'(l_i)g'(\theta_j)g(\theta_k) \notag \\
&=g''(\theta_j)g'(\theta_k)-\frac{g'(\theta_i)+g'(\theta_j)g'(\theta_k)}{g(\theta_j)g(\theta_k)}g'(\theta_j)g(\theta_k) \notag\\
&=\frac1{g(\theta_j)}[(g(\theta_j)g''(\theta_j)-g'^2(\theta_j))g'(\theta_k)-g'(\theta_i)g'(\theta_j)].\label{fml:appendix
B2}
\end{align}
For $g(\theta)=\sin\theta,$ or $\sinh\theta,$ or $\theta,$ we
always have $$g(\theta_j)g''(\theta_j)-g'^2(\theta_j)=-1.$$ Hence
(\ref{fml:appendix B2}) is
$\frac1{g(\theta_j)}(-g'(\theta_k)-g'(\theta_i)g'(\theta_j))=-g(\theta_i)f(l_k)$
due to the cosine law (\ref{fml:cosine1}). By symmetry, the
similar formula holds for the coefficient of $d\theta_k.$ Hence
(\ref{fml:appendix B1}) is
$$f''(l_i)g(\theta_j)g(\theta_k)dl_i=g''(\theta_i)d\theta_i-g(\theta_i)f(l_k)d\theta_j-g(\theta_i)f(l_j)d\theta_k.$$
By the definition of $f,$ we have $f''=f.$ Thus
\begin{align*}
dl_i&=\frac{-g(\theta_i)}{f(l_i)g(\theta_j)g(\theta_k)}
(-\frac{g''(\theta_i)}{g(\theta_i)}d\theta_i+f(l_k)d\theta_j+f(l_j)d\theta_k)\\
&=\frac{-g(\theta_i)}{f(l_i)g(\theta_j)g(\theta_k)}
(\varepsilon_id\theta_i+f(l_k)d\theta_j+f(l_j)d\theta_k).
\end{align*}

This proves (\ref{fml:derivative1}).

\bibliographystyle{amsplain}

\end{document}